\providecommand{\U}[1]{\protect\rule{.1in}{.1in}}
\providecommand{\U}[1]{\protect\rule{.1in}{.1in}}
\providecommand{\U}[1]{\protect\rule{.1in}{.1in}}
\providecommand{\U}[1]{\protect\rule{.1in}{.1in}}
\providecommand{\U}[1]{\protect\rule{.1in}{.1in}}
\providecommand{\U}[1]{\protect\rule{.1in}{.1in}}
\providecommand{\U}[1]{\protect\rule{.1in}{.1in}}
\providecommand{\U}[1]{\protect\rule{.1in}{.1in}}
\providecommand{\U}[1]{\protect\rule{.1in}{.1in}}
\providecommand{\U}[1]{\protect\rule{.1in}{.1in}}
\providecommand{\U}[1]{\protect\rule{.1in}{.1in}}
\providecommand{\U}[1]{\protect\rule{.1in}{.1in}}
\providecommand{\U}[1]{\protect\rule{.1in}{.1in}}
\providecommand{\U}[1]{\protect\rule{.1in}{.1in}}
\providecommand{\U}[1]{\protect\rule{.1in}{.1in}}
\providecommand{\U}[1]{\protect\rule{.1in}{.1in}}
\providecommand{\U}[1]{\protect\rule{.1in}{.1in}}
\providecommand{\U}[1]{\protect\rule{.1in}{.1in}}
\providecommand{\U}[1]{\protect\rule{.1in}{.1in}}
\providecommand{\U}[1]{\protect\rule{.1in}{.1in}}
\providecommand{\U}[1]{\protect\rule{.1in}{.1in}}
\providecommand{\U}[1]{\protect\rule{.1in}{.1in}}
\providecommand{\U}[1]{\protect\rule{.1in}{.1in}}
\providecommand{\U}[1]{\protect\rule{.1in}{.1in}}
\providecommand{\U}[1]{\protect\rule{.1in}{.1in}}
\providecommand{\U}[1]{\protect\rule{.1in}{.1in}}
\providecommand{\U}[1]{\protect\rule{.1in}{.1in}}
\providecommand{\U}[1]{\protect\rule{.1in}{.1in}}
\providecommand{\U}[1]{\protect\rule{.1in}{.1in}}
\providecommand{\U}[1]{\protect\rule{.1in}{.1in}}
\providecommand{\U}[1]{\protect\rule{.1in}{.1in}}
\providecommand{\U}[1]{\protect\rule{.1in}{.1in}}
\providecommand{\U}[1]{\protect\rule{.1in}{.1in}}
\providecommand{\U}[1]{\protect\rule{.1in}{.1in}}
\providecommand{\U}[1]{\protect\rule{.1in}{.1in}}
\providecommand{\U}[1]{\protect\rule{.1in}{.1in}}
\providecommand{\U}[1]{\protect\rule{.1in}{.1in}}
\providecommand{\U}[1]{\protect\rule{.1in}{.1in}}
\newtheorem{theorem}{Theorem}
{}
\newtheorem{corollary}{Corollary}
\newtheorem{definition}{Definition}
\newtheorem{lemma}{Lemma}
{}
\newtheorem{proposition}{Proposition}
\newtheorem{remark}{Remark}
\newtheorem{summary}{Summary}
\newenvironment{proof}[1][Proof]{\textbf{#1.} }{\ \rule{0.5em}{0.5em}}
\begin{document}

\title{On the Spectrality and Spectral Expansion of the Non-self-adjoint
\ Mathieu-Hill Operator in $L_{2}(-\infty,\infty).$ }
\author{O. A. Veliev\\{\small Dogus University, Ac\i badem, Kadik\"{o}y, \ }\\{\small Istanbul, Turkey.}\ {\small e-mail: oveliev@dogus.edu.tr}}
\date{}
\maketitle

\begin{abstract}
In this paper we investigate the non-self-adjoint operator$\ H$ generated in
$L_{2}(-\infty,\infty)$ by the Mathieu-Hill equation with a complex-valued
potential. We find a necessary and sufficient conditions on the potential for
which $H$ has no spectral singularity at infinity and it is an asymptotically
spectral operator. Moreover, we give a detailed classification, stated in term
of the potential, for the form of the spectral decomposition of the operator
$H$ by investigating the essential spectral singularities.

Key Words: Mathieu-Hill operator, Spectral operator, Spectral Expansion.

AMS Mathematics Subject Classification: 34L05, 34L20.

\end{abstract}

\section{Introduction}

Let $L(q)$ be the Hill operator generated in $L_{2}(-\infty,\infty)$ by the expression%

\begin{equation}
l(y)=-y^{\prime\prime}+qy,
\end{equation}
where $q$ is a complex-valued summable function on $[0,1]$ and $q(x+1)=q(x)$
a.e.. It is well-known that (see [3, 8, 9]) the spectrum $S(L(q))$ of the
operator $L(q)$ is the union of the spectra $S(L_{t}(q))$ of the operators
$L_{t}(q)$ for $t\in(-\pi,\pi],$ where $L_{t}(q)$ is the operator generated in
$L_{2}[0,1]$ by (1) and the boundary conditions
\begin{equation}
y(1)=e^{it}y(0),\text{ }y^{\prime}(1)=e^{it}y^{\prime}(0).
\end{equation}
The spectrum of $L_{t}(q)$ for $t\in\mathbb{C}$ consist of the eigenvalues
that are the roots of
\begin{equation}
F(\lambda)=2\cos t,
\end{equation}
where $F(\lambda)=\varphi^{\prime}(1,\lambda)+\theta(1,\lambda)$, $\varphi$
and $\theta$ are the solutions of the equation $l(y)=\lambda y$ satisfying the
initial conditions $\theta(0,\lambda)=\varphi^{\prime}(0,\lambda)=1$
and$\quad\theta^{\prime}(0,\lambda)=\varphi(0,\lambda)=0.$

The operators $L_{t}(q)$ and $L(q)$ are denoted by $H_{t}(a,b)$ and $H(a,b)$
respectively when
\begin{equation}
\text{ }q(x)=ae^{-i2\pi x}+be^{i2\pi x},
\end{equation}
where $a$ and $b$ are the complex numbers. In this paper we consider the
spectrality and spectral expansion of the non-self-adjoint \ Mathieu-Hill
operator$\ H(a,b)$ defined in $L_{2}(-\infty,\infty)$. For this aim, first, in
Section 3 we obtain the uniform with respect to $t$ in some neighborhood of
$0$ and $\pi$ asymptotic formulas for the eigenvalues of the operators
$L_{t}(q)$ and $H_{t}(a,b)$. These formulas are the preliminary investigations
and have an auxiliary nature. Then, in Section 4 using these asymptotic
formulas, we find a necessary and sufficient condition, stated in term of
potential (4), for the asymptotic spectrality of the operator $H(a,b)$.
Finally, in Section 5 we classify in detail the form of the spectral expansion
of $H(a,b)$ in term of $a$ and $b.$

Gesztezy and Tkachenko [6] proved two versions of a criterion for the Hill
operator $L(q)$ with $q\in L_{2}[0,1]$ to be a spectral operator of scalar
type, in sense of Danford, one analytic and one geometric. The analytic
version was stated in term of the solutions of Hill's equation. The geometric
version of the criterion uses algebraic and geometric \ properties of the
spectra of periodic/antiperiodic and Dirichlet boundary value problems.

The problem of describing explicitly, for which potentials $q$ the Hill
operators $L(q)$ are spectral operators appeared to have been open for about
60 years. In paper [13] we found the explicit conditions on the potential $q$
such that $L(q)$ is an asymptotically spectral operator. In this paper we find
a criterion for asymptotic spectrality of $H(a,b)$ stated in term of $a$ and
$b$. Note that these investigations show that the set of potentials $q$ for
which $L(q)$ is spectral is a small subset of the periodic functions and it is
very hard to describe explicitly the required subset. Moreover, the papers
[17, 18] and this paper show that the investigation of the spectrality is
ineffective for the construction of the spectral expansion for $L(q).$ For
this in [17, 18] we introduced a new notions essential spectral singularity
(ESS) and ESS at infinity and proved that they determine the form of the
spectral expansion for $L(q).$ In this paper investigating the ESS and ESS at
infinity for $H(a,b)$ we classify the form of its spectral expansion in term
of $a$ and $b.$

To describe more precisely the main results of this paper let us introduce
some notations and definitions of the needed notions. The spectrum of
$L_{t}(q)$ consist of the eigenvalues. In [16] we proved that the eigenvalues
$\lambda_{n}(t)$ of $L_{t}$ can be numbered (counting the multiplicity) by
elements of $\mathbb{Z}$ such that, for each $n$ the function $\lambda
_{n}(\cdot)$ is continuous on $(-\pi,\pi]$ and $\left\vert \lambda_{\pm
n}(t)\right\vert \rightarrow\infty$ as $n\rightarrow\infty.$ The spectrum of
$L(q)$ is the union of the continuous curves $\Gamma_{n}=\{\lambda_{n}%
(t):t\in(-\pi,\pi]\}$ for $n\in\mathbb{Z}.$ Let $\Psi_{n,t}$ be the normalized
eigenfunction corresponding to the simple eigenvalue $\lambda_{n}(t)$\ and
$\Psi_{n,t}^{\ast}$ be the normalized eigenfunction of $(L_{t}(q))^{\ast}$
corresponding to $\overline{\lambda_{n}(t)}.$ It is well-known that (see p. 39
of [10]) if $\lambda_{n}(t)$ is a simple eigenvalue of $L_{t},$ then the
spectral projection $e(\lambda_{n}(t))$ defined by contour integration of the
resolvent of $L_{t}(q)$ over the closed contour containing only the eigenvalue
$\lambda_{n}(t),$ has the form%
\begin{equation}
e(\lambda_{n}(t))f=\frac{1}{d_{n}(t)}(f,\Psi_{n,t}^{\ast})\Psi_{n,t},\text{ }%
\end{equation}
where
\begin{equation}
d_{n}(t)=\left(  \Psi_{n,t},\Psi_{n,t}^{\ast}\right)  ,\text{ }\left\Vert
e\left(  \lambda_{n}(t)\right)  \right\Vert =\left\vert d_{n}(t)\right\vert
^{-1},
\end{equation}
and $\left(  \cdot,\cdot\right)  $ is the inner product in $L_{2}[0,1].$ Note
that in this paper the number $d_{n}(t)$ is defined only for the simple
eigenvalues $\lambda_{n}(t)$. If $\lambda_{n}(t)$ is a simple eigenvalue then
the normalized eigenfunctions $\Psi_{n,t}$ and $\Psi_{n,t}^{\ast}$ are
determined uniquely up to constant of modulus $1.$ Therefore $\mid
d_{n}(t)\mid$ is uniquely defined and it is the norm of the projection
$e\left(  \lambda_{n}(t)\right)  $. Note also that $\lambda_{n}(t)$ is a
simple eigenvalue if $F^{\prime}(\lambda_{n}(t))\neq0$ and the roots of the
equation $F^{\prime}(\lambda)=0$ is a discrete set, since $F^{\prime}$ is an
entire function. Thus $\lambda_{n}(t)$ is a simple eigenvalue for $t\in\left(
(-\pi,\pi]\backslash A_{n}\right)  $, where $A_{n}$ is at most a finite set.
Moreover $\mid d_{n}\mid$ is continuous at $t$ if $\lambda_{n}(t)$ is a simple
eigenvalue. Therefore in this paper we prefer the following definitions stated
in term of $d_{n}(t).$ McGarvey [8] proved that $L(q)$ is a spectral operator
if and only if there exists $c_{1}>0$ such that $\left\Vert e\left(
\lambda_{n}(t)\right)  \right\Vert <c_{1}$ for $n\in\mathbb{Z}$ and for almost
all $t\in(-\pi,\pi].$ It can be stated in terms of $d_{n}(t)$ and $A_{n}$ as follows.

\begin{definition}
We say that $L(q)$ is a spectral operator if there exists $c_{1}>0$ such that
\begin{equation}
\left\vert d_{n}(t)\right\vert ^{-1}<c_{1}%
\end{equation}
for all $n\in\mathbb{Z}$ and $t\in\left(  (-\pi,\pi]\backslash A_{n}\right)  $.
\end{definition}

Note that here and in subsequent relations we denote by $c_{i}$ for
$i=1,2,...$ the positive constants whose exact values are inessential.
Similarly, we use the following definition.

\begin{definition}
We say that $L(q)$ is an asymptotically spectral operator if there exists
$N>0$ such that (7) holds for all $\left\vert n\right\vert >N$ and
$t\in\left(  (-\pi,\pi]\backslash A_{n}\right)  $.
\end{definition}

As was noted in the paper [16], the spectral singularity of the operator
$L(q)$ are the points $\lambda\in S(L(q))$ for which the projections $e\left(
\lambda_{n}(t)\right)  $ corresponding to the simple eigenvalues $\lambda
_{n}(t)$ lying in some neighborhood of $\lambda$ are not uniformly bounded.
Therefore we have the following definitions for the spectral singularities in
term of $d_{n}.$

\begin{definition}
A point $\lambda\in\sigma(L(q))$ is said to be a spectral singularity of
$L(q)$ if there exist $n\in\mathbb{Z}$ and sequence $\{t_{k}\}\subset\left(
(-\pi,\pi]\backslash A_{n}\right)  $ such that $\lambda_{n}(t_{k}%
)\rightarrow\lambda$ and $\left\vert d_{n}(t_{k})\right\vert \rightarrow0$ as
$k\rightarrow\infty.$ We say that the operator $L(q)$ has a spectral
singularity at infinity if there exist sequences $\left\{  n_{k}\right\}
\subset\mathbb{Z}$ and $\left\{  t_{k}\right\}  \subset\left(  (-\pi
,\pi]\backslash A_{n_{k}}\right)  $ such that $\left\vert n_{k}\right\vert
\rightarrow\infty$ and $\left\vert d_{n_{k}}(t_{k})\right\vert \rightarrow0$
as $k\rightarrow\infty.$
\end{definition}

It is clear that the operator $L(q)$ has no the spectral singularity at
infinity if and only if it is asymptotically spectral operator. Now let us
list the main results.

\begin{theorem}
(Main Result for Spectrality) The operator $H(a,b)$ has no spectral
singularity at infinity and is an asymptotically spectral operator if and only
if $\mid a\mid=\mid b\mid$\ and
\begin{equation}
\text{ }\inf_{q,p\in\mathbb{N}}\{\mid q\alpha-(2p-1)\mid\}\neq0,
\end{equation}
where $\alpha=\pi^{-1}\arg(ab)$, $\mathbb{N=}\left\{  1,2,...,\right\}  .$
\end{theorem}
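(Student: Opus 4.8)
The plan is to reduce the spectrality question to the asymptotic behavior of the quantity $d_n(t)$ as $|n|\to\infty$, using the uniform asymptotic formulas for the eigenvalues and eigenfunctions of $H_t(a,b)$ that are established in Section 3 (which I may assume). The operator has no spectral singularity at infinity precisely when $|d_n(t)|$ is bounded below uniformly for large $|n|$ and $t\in((-\pi,\pi]\setminus A_n)$. So the entire problem is to compute $d_n(t)=(\Psi_{n,t},\Psi_{n,t}^\ast)$ asymptotically and to determine exactly when this stays away from $0$.

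First I would exploit the special structure of the Mathieu-Hill potential $q(x)=ae^{-i2\pi x}+be^{i2\pi x}$. Because the potential has only the two Fourier modes $\pm 2\pi$, the eigenvalues near the high-energy gaps cluster around $(\pi n)^2$, and the relevant eigenfunctions $\Psi_{n,t}$ are dominated by the two exponentials $e^{i\pi n x}$ and $e^{-i\pi n x}$ (for the periodic/antiperiodic cases $t\approx 0,\pi$). Perturbation theory then shows that $d_n(t)$ is, to leading order, governed by a $2\times 2$ matrix whose off-diagonal entries are built from $a$ and $b$. The adjoint operator $(H_t(a,b))^\ast$ has potential $\overline{q}$, which swaps the roles of $a,b$ into $\overline{b},\overline{a}$; this is the source of the asymmetry that makes $|a|=|b|$ appear. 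I expect the leading term of $d_n(t)$ to take the form of a normalized inner product whose modulus degenerates to zero exactly when an eigenvalue of this $2\times2$ block becomes defective, i.e. when the discriminant-type expression $\sqrt{(\text{diagonal gap})^2+4(\text{product of off-diagonals})}$ vanishes. The off-diagonal product is essentially $ab$ (and $\overline{a}\,\overline{b}$ on the adjoint side), and balancing the two magnitudes forces $|a|=|b|$, while the phase $\arg(ab)$ enters through the resonance condition at the antiperiodic points, producing the arithmetic constraint \eqref{eq:8} involving $\alpha=\pi^{-1}\arg(ab)$.

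Concretely, I would carry out the following steps in order. (i) Write $\Psi_{n,t}$ and $\Psi_{n,t}^\ast$ in terms of the asymptotic eigenfunctions from Section 3, normalize both in $L_2[0,1]$, and compute $d_n(t)$ to leading order, isolating a dominant factor $D_n(t)$ plus an $o(1)$ remainder that is uniform in $t$. (ii) Show $|D_n(t)|$ is bounded below uniformly iff $|a|=|b|$, by examining the two limiting regimes $t\to 0$ and $t\to\pi$ separately and comparing the growth of the $a$- versus $b$-contributions; the inequality $|a|\ne|b|$ should force $|D_n(t)|\to 0$ along a suitable sequence $t_k$, manufacturing a spectral singularity at infinity. (iii) Under $|a|=|b|$, refine the asymptotics to make the phase $\arg(ab)$ visible: the condition for $d_n(t)$ to avoid $0$ becomes a statement that certain resonance values, indexed by the gap number, never collide, which translates into $q\alpha-(2p-1)$ being bounded away from $0$ over $q,p\in\mathbb{N}$ — exactly \eqref{eq:8}. (iv) Finally, invoke the remark preceding the theorem that ``no spectral singularity at infinity'' is equivalent to ``asymptotically spectral,'' which upgrades the $d_n$ estimate to the operator-theoretic conclusion.

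The hard part will be step (iii): controlling the phase $\arg(ab)$ with enough precision to extract the Diophantine-type condition \eqref{eq:8}, rather than merely the magnitude condition $|a|=|b|$. This requires the second-order (not just leading-order) term of the asymptotic expansion of $d_n(t)$, because the phase only manifests once the leading moduli balance; keeping the error uniform in $t$ across the shrinking neighborhoods of $0$ and $\pi$ while tracking how $n$ and the gap index $p$ interact is the delicate estimate. I would expect the argument to hinge on an explicit formula for the off-diagonal perturbation coefficient as a function of $n$ and $t$, whose phase is a linear function of $n\alpha$, so that the non-vanishing of $|d_n(t)|$ becomes the requirement that $n\alpha$ stays away from the odd integers — giving the stated infimum condition after reindexing $q$ and $p$.
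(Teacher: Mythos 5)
Your proposal follows essentially the same route as the paper's proof: reduce everything to the asymptotics of $d_n(t)=(\Psi_{n,t},\Psi_{n,t}^{\ast})$ near $t=0$ and $t=\pi$ via the two-exponential ($2\times2$) reduction, in which the off-diagonal coefficients behave like $\beta_n\sim b^{2n}\left((2\pi)^{2n-1}(2n-1)!\right)^{-2}$ and $\alpha_n\sim a^{2n}\left((2\pi)^{2n-1}(2n-1)!\right)^{-2}$, so that balancing the operator against its adjoint (with $a,b$ replaced by $\overline{b},\overline{a}$) forces $\mid a\mid=\mid b\mid$ exactly as in the paper's Proposition 1, while the phase of $(ab)^{q}$ approaching odd multiples of $\pi$ is precisely what allows the discriminant $(4\pi nt)^{2}+\alpha_n\beta_n$ to vanish at real $t$ (the paper's Theorems 7 and 8), yielding condition (8) after uniting the periodic case $q=2n$ with the antiperiodic case $q=2n+1$. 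One minor imprecision: in the case $\mid a\mid\neq\mid b\mid$ the degeneration $\mid d_n(t)\mid\rightarrow0$ happens at simple eigenvalues, through the eigenfunction and the adjoint eigenfunction concentrating asymptotically on opposite exponentials, not through the $2\times2$ block actually becoming defective at finite $n$ as your heuristic paragraph suggests, but your concrete step (ii), which compares the growth of the $a$- and $b$-contributions, is exactly the paper's mechanism, so this does not affect the soundness of the plan.
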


This main result of Section 4 implies the following

\begin{corollary}
Let $ab\in\mathbb{R}$. Then $H(a,b)$ is a spectral operator if and only it is
self adjoint.
\end{corollary}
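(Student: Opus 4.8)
The plan is to combine Theorem 1 with the elementary description of when $H(a,b)$ is self-adjoint. I would first record that $H(a,b)$ is self-adjoint exactly when its potential $q(x)=ae^{-i2\pi x}+be^{i2\pi x}$ is real-valued a.e.; since $\overline{q(x)}=\bar{b}e^{-i2\pi x}+\bar{a}e^{i2\pi x}$, this happens if and only if $b=\bar a$. For the ``if'' direction I would argue that when $b=\bar a$ the potential is real, so each fiber operator $L_t(q)=H_t(a,b)$ with $t\in(-\pi,\pi]$ is self-adjoint on $L_2[0,1]$; consequently the spectral projection attached to a simple eigenvalue is an orthogonal projection of norm $1$, whence $\left\vert d_n(t)\right\vert^{-1}=1$ by (6). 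Then (7) holds with any $c_1>1$ for every $n$ and every admissible $t$, so $H(a,b)$ is spectral by Definition 1. Alternatively, one may invoke the standard fact that a self-adjoint operator is automatically a spectral operator of scalar type.

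For the ``only if'' direction I would first note that, comparing Definitions 1 and 2, any spectral operator is in particular asymptotically spectral, so Theorem 1 applies and delivers $\left\vert a\right\vert=\left\vert b\right\vert$ together with condition (8). If $ab=0$, then $\left\vert a\right\vert=\left\vert b\right\vert$ forces $a=b=0$ and $q\equiv0$, which is the self-adjoint free operator; so I may assume $ab\neq0$. Because $ab$ is real and nonzero, $\arg(ab)\in\{0,\pi\}$, hence $\alpha=\pi^{-1}\arg(ab)\in\{0,1\}$.

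The crux is to rule out $\alpha=1$, i.e.\ $ab<0$. Here I would simply substitute the indices $q=p=1$ into (8): this gives $\left\vert q\alpha-(2p-1)\right\vert=\left\vert 1-1\right\vert=0$, so the infimum in (8) vanishes, contradicting the conclusion of Theorem 1. Thus $\alpha=0$, i.e.\ $ab>0$. Writing $a=re^{i\phi}$ and $b=re^{i\psi}$ with $r=\left\vert a\right\vert=\left\vert b\right\vert>0$, the relation $ab>0$ forces $e^{i(\phi+\psi)}=1$, so $e^{i\psi}=e^{-i\phi}$ and $b=re^{-i\phi}=\bar a$. Hence $q$ is real-valued and $H(a,b)$ is self-adjoint.

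The only substantive input is Theorem 1; the remaining steps are short computations. The one point that needs care is the passage from the $d_n$-formulation of spectrality in Definition 1 to the hypotheses of Theorem 1 (using that a spectral operator is asymptotically spectral), together with the correct handling of the argument convention for $\alpha$ and of the degenerate case $ab=0$.
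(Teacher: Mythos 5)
Your proposal is correct and follows essentially the same route as the paper: spectral $\Rightarrow$ asymptotically spectral (Definitions 1 and 2), then Theorem 1 (with Proposition 1 giving $\left\vert a\right\vert=\left\vert b\right\vert$) to rule out $ab<0$ via the failure of (8) at $q=p=1$, and finally $\left\vert a\right\vert=\left\vert b\right\vert$ with $ab>0$ yields $b=\bar{a}$. Your explicit treatment of the degenerate case $ab=0$ and the norm-one-projection justification of ``self-adjoint $\Rightarrow$ spectral'' are minor refinements of steps the paper states without detail, not a different argument.
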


These results show that the theory of spectral operator is ineffective for the
study of the spectral expansion for the non-self-adjoint operator $H(a,b)$
too. It was proven in [5] that in the self-adjoint case the spectral expansion
of $L(q)$ has the following elegant form
\begin{equation}
f=\frac{1}{2\pi}\sum_{n\in\mathbb{Z}}\int\nolimits_{(-\pi,\pi]}a_{n}%
(t)\Psi_{n,t}dt,
\end{equation}
where
\begin{equation}
a_{n}(t)=\frac{1}{\overline{d_{n}(t)}}\left(  \int\nolimits_{\mathbb{R}%
}f(x)\overline{\Psi_{n,t}^{\ast}(x)}dx\right)  .
\end{equation}
In the non-self-adjoint case to obtain the spectral expansion, we need to
consider the integrability of $a_{n}(t)\Psi_{n,t}$ with respect to $t$ over
$(-\pi,\pi]$ which is connected with the integrability of $\frac{1}{d_{n}}.$
Therefore in [17] we introduced the following notions.

\begin{definition}
A number $\lambda_{0}\in\sigma(L)$ is said to be an essential spectral
singularity (ESS) of $L$ if there exist $t_{0}\in(-\pi,\pi]$ and
$n\in\mathbb{Z}$ such that $\lambda_{0}=\lambda_{n}(t_{0})$ and $\frac
{1}{d_{n}}$ is not integrable over $(t_{0}-\delta,t_{0}+\delta)$ for all
$\delta>0.$
\end{definition}

It is clear that $\lambda_{0}=\lambda_{n}(t_{0})$ is ESS if and only is there
exists sequence of closed intervals $I(s)$ approaching $t_{0\text{ }}$such
that $\lambda_{n}(t)$ for $t\in I(s)$ are the simple eigenvalue and
\begin{equation}
\lim_{s\rightarrow\infty}\int\nolimits_{I(s)}\left\vert d_{n}(t)\right\vert
^{-1}dt=\infty.
\end{equation}

It the similar way in [17] we defined ESS at infinity.

\begin{definition}
We say that the operator $L(q)$ has ESS at infinity if there exist sequence of
integers $n_{s}$ and sequence of closed subsets $I(s)$ of $(-\pi,\pi]$ such
that $\lambda_{n_{s}}(t)$ for $t\in I(s)$ are the simple eigenvalues and
\begin{equation}
\lim_{s\rightarrow\infty}\int\nolimits_{I(s)}\left\vert d_{n_{s}%
}(t)\right\vert ^{-1}dt=\infty.
\end{equation}

\end{definition}

Note that it follows from the above definitions that the boundlessness of
$\left\vert d_{n}(\cdot)\right\vert ^{-1}$ is the characterization of the
spectral singularities and the considerations of the spectral singularities
play only the crucial rule for the investigations of the spectrality of
$L(q)$. On the other hand, the periodic differential operators, in general, is
not a spectral operator. Therefore to construct the spectral expansion for the
operator $L,$ in the general case, in [17, 18] we introduced the new concepts
ESS which connected with the nonintegrability of $\left\vert d_{n}%
(\cdot)\right\vert ^{-1}$ and proved that the spectral expansion has the
elegant form (9) if and only if $L(q)$ has no ESS and ESS at infinity. In
Section 5 investigating the ESS and ESS at infinity for $H(a,b)$ we obtained
the following main results for its spectral expansion.

\begin{theorem}
If $0<\left\vert ab\right\vert <16/9$, then $H(a,b)$ has no ESS and ESS at
infinity and its spectral expansion has the elegant form (9).
\end{theorem}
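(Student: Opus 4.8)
The plan is to use the criterion recalled from [17, 18] just before the statement: the spectral expansion of $H(a,b)$ has the elegant form (9) if and only if $H(a,b)$ has neither an ESS nor an ESS at infinity. So it suffices to prove both non-existence statements under $0<|ab|<16/9$. First I would record a reduction that explains why the hypothesis is phrased through the product $ab$: the eigenvalues of $H_t(a,b)$ solve $F(\lambda)=2\cos t$, and on the Fourier side $H_t(a,b)$ is a tridiagonal operator whose off-diagonal entries $a,b$ can be symmetrized by a diagonal conjugation to the common value $\sqrt{ab}$. Since this conjugation is implemented by a fixed invertible diagonal map with smooth, nonvanishing normalization factors, the zero set of $d_n$ on the real $t$-axis and its order of vanishing there are governed by $ab$ alone. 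Thus ESS and ESS at infinity depend only on $ab$, and I may work with the symmetric operator $H(\sqrt{ab},\sqrt{ab})$.

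For the absence of ESS at infinity I would invoke the uniform (in $t$ near $0$ and $\pi$) asymptotic formulas of Section 3. For $|n|$ large, $\lambda_n(t)$ is simple except in a shrinking neighborhood of the resonance points $t=0,\pi$, where it can coalesce with its partner band. The local $2\times2$ reduction shows that there $d_n(t)$ vanishes like $(t-t_0)^{1/2}$, an interior square-root branch point, so $|d_n(t)|^{-1}\sim|t-t_0|^{-1/2}$ is integrable; moreover, since the gap length $\gamma_n\to0$ super-exponentially, the whole singular contribution $\int|d_n(t)|^{-1}\,dt$ over the coalescence region is $O(\gamma_n/n)\to0$, while off that region $|d_n|$ is bounded below. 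Hence $\sup_n\int_I|d_n|^{-1}\,dt<\infty$ over the sets $I$ on which $\lambda_n$ is simple, which excludes an ESS at infinity.

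For the absence of ESS I would show that $|d_n(t)|^{-1}$ is integrable on $(-\pi,\pi]$ for each fixed $n$. Differentiating $F(\lambda_n(t))=2\cos t$, the local rate computation distinguishes two cases: at an interior multiple point ($F'(\lambda^*)=0$, $\sin t^*\ne0$) one finds $d_n\sim(t-t^*)^{1/2}$, integrable; at a band-edge multiple point ($F'(\lambda^*)=0$, $F(\lambda^*)=\pm2$) one finds $d_n\sim(t-t^*)$, which is non-integrable and is the genuine source of an ESS. A band-edge multiple point is exactly a multiple root of $F(\lambda)^2-4$, i.e. a coalescence of two periodic or two antiperiodic eigenvalues (a closed gap). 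There is also the a priori possibility of an interior zero of the numerator $(\Psi_{n,t},\Psi_{n,t}^*)=\sum_m u_m^2$ away from any coalescence (an accidental isotropy of the eigenvector), which would again force $d_n\sim(t-t^*)$. Both phenomena must be excluded for $0<|ab|<16/9$. I would do this by comparing $F$ with the free discriminant $2\cos\sqrt\lambda$ and by estimating the Fourier tails of the normalized eigenvector: away from the resonance points a single component satisfies $|u_{m_0}|^2>1/2$, so $\sum_m u_m^2\ne0$, while near a resonance exactly two components are comparable and produce only the harmless square-root zero. The bound $|ab|<16/9$ is meant to be the quantitative input that makes these estimates hold simultaneously for all bands, forcing $F^2-4$ to have only simple roots and ruling out accidental isotropy.

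The main obstacle will be this finite/low-band part. The asymptotic formulas handle $|n|$ large, but the low bands, where the coupling is not dominated by the level spacing, must be treated by hand, and it is precisely there that the explicit constant $16/9$ is produced. The delicate step is to convert ``simple root of $F^2-4$'' and ``no isotropic eigenvector'' into the uniform integrable bound $|d_n(t)|^{-1}\lesssim|t-t_0|^{-1/2}$, and to check that the controlling estimate on the eigenvector tails (equivalently, on the discriminant) does not degenerate before $|ab|$ reaches $16/9$. Once every real zero of $d_n$ is shown to be of square-root type, integrability of $|d_n|^{-1}$ follows, neither an ESS nor an ESS at infinity survives, and the elegant expansion (9) is obtained from the criterion of [17, 18].
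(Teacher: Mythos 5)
Your overall frame is right (reduce to the criterion of Summary 1(a) and prove the two non-existence statements), but the proof contains two genuine gaps. First, the symmetrization reduction fails as stated. The diagonal conjugation that equalizes the off-diagonal entries to $\sqrt{ab}$ is $\mathrm{diag}\bigl((a/b)^{n/2}\bigr)$ on the Fourier side, which is exponentially \emph{unbounded} whenever $|a|\neq|b|$; it preserves the discriminant $F(\lambda)$, hence the eigenvalues $\lambda_n(t)$ and their multiplicities, but it does \emph{not} preserve the normalized pairing $d_n(t)=(\Psi_{n,t},\Psi_{n,t}^{\ast})$, because the $L_2$-normalizations of $\Psi_{n,t}$ and $\Psi_{n,t}^{\ast}$ change by $n$-dependent, exponentially large factors. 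The paper's Proposition 1 exhibits this concretely: for $|a|\neq|b|$ one has $d_n(0)\rightarrow 0$ although these eigenvalues are simple, while for the symmetrized operator $|d_n(0)|$ stays bounded below; so the size of $d_n$, which is exactly what ESS and ESS at infinity measure, is \emph{not} a function of $ab$ alone. That the ESS conclusions nevertheless turn out to depend only on whether $ab=0$ is a theorem, not a soft conjugation argument: the paper must develop Theorem 11 and Lemma 3 precisely to control the exponentially small factor $a^nb^{-n}$ in $u_n(t)\overline{u_n^{\ast}(t)}$ when $|a|<|b|$. (Also, your worry about ``accidental isotropy'' at a simple eigenvalue is vacuous: there $e(\lambda_n(t))$ is a bounded projection with norm $|d_n(t)|^{-1}$, so $d_n(t)\neq 0$; the genuine issue is smallness of $d_n$, uniformly in $n$, not exact vanishing.)

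Second, your ``no ESS'' part is left unproven where it matters, and it attacks the problem from the wrong end. You propose to rederive the constant $16/9$ by hand through low-band estimates on the discriminant and eigenvector tails, and you acknowledge you cannot carry this out; the paper never does this. It instead quotes Summary 1(c) (an ESS must be a \emph{multiple 2-periodic} eigenvalue, i.e.\ a multiple eigenvalue of $H_0$ or $H_{\pi}$, from [17]) together with Summary 1(d) (for $0<|ab|<16/9$ all 2-periodic eigenvalues of $H(a,b)$ are simple — Theorems 13 and 15 of [14]), so the absence of ESS is immediate and $16/9$ enters only as a cited input; in particular no case analysis of interior multiple points versus band-edge points is needed, since interior multiple eigenvalues are not ESS by Summary 1(c). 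The actual work in the paper is the absence of ESS at infinity for all $ab\neq 0$ (Theorem 12), where your square-root heuristic $|d_n(t)|^{-1}\sim|t-t_0|^{-1/2}$ points in the right direction but is merely asserted: the coalescence parameter $t_0$ is in general \emph{complex} (Remark 6), and making the bound integrable and uniform in $n$ when the angle condition (102) fails, and when $|a|\neq|b|$, requires the discriminant machinery of formulas (134)--(144) (the representation of $1/d_n$ through $F'$, Cauchy estimates on $f'(\mu)$, and the Taylor expansion around the double point), culminating in Lemma 4 and the $O(n^{-3})$ integral bounds of Theorem 12. Without these two inputs your argument does not close.
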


For the largest subclass of the potentials (4) we prove the following criterion

\begin{theorem}
The non-self-adjoint operator $H(a,b)$ has no ESS at infinity, has at most
finite number of ESS and its spectral expansion has the asymptotically elegant
form
\begin{equation}
f(x)=\frac{1}{2\pi}\left(  \int\nolimits_{(-\pi,\pi]}\sum\limits_{n\in
\mathbb{S}}a_{n}(t)\Psi_{n,t}(x)dt+\sum_{n\in\mathbb{Z}\backslash\mathbb{S}%
}\int\nolimits_{(-\pi,\pi]}a_{n}(t)\Psi_{n,t}(x)dt\right)
\end{equation}
if and only if $ab\neq0,$ where $\mathbb{S}$ is at most a finite set and is
the set of the indices $n$ for which $\Gamma_{n}$ contains at least one ESS.
\end{theorem}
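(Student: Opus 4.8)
The plan is to reduce everything to the behavior, for large $\left\vert n\right\vert$ and uniformly in $t$, of the function $d_{n}(t)$ in small neighborhoods of the band-edge values $t_{0}\in\{0,\pi\}$, since by Definitions 4 and 5 the presence of ESS and of ESS at infinity is governed entirely by the integrability over $t$ of $\left\vert d_{n}(t)\right\vert^{-1}$, while by the expansion criterion of [17, 18] the elegant form (9) holds precisely when there are neither ESS nor ESS at infinity. Using the uniform asymptotic formulas of Section 3, I would first show that near each band edge the pair of eigenvalues approaching one another is described, up to a controllable error, by a two-dimensional resonance block whose off-diagonal entries $c_{1},c_{2}$ satisfy $\left\vert c_{1}\right\vert \asymp\left\vert a\right\vert^{m}$ and $\left\vert c_{2}\right\vert \asymp\left\vert b\right\vert^{m}$ for an exponent $m=m(n)\to\infty$, so that the whole question is reduced to the spectral geometry of this block.

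Assume $ab\neq0$. Then $c_{1}c_{2}\neq0$, the two eigenvalues of the block differ by $\pm2\sqrt{(\mu_{1}-\mu_{2})^{2}/4+c_{1}c_{2}}$, and they can coincide only at an exceptional point where $d_{n}$ vanishes like a square root; hence $\left\vert d_{n}(t)\right\vert^{-1}$ is integrable across every band edge for all large $\left\vert n\right\vert$. The delicate point is that when $\left\vert a\right\vert \neq\left\vert b\right\vert$ the value of $\left\vert d_{n}\right\vert^{-1}$ at the edge grows like $\left\vert a/b\right\vert^{m/2}$, i.e.\ there is a spectral singularity at infinity in the sense of Theorem 1; nevertheless the window on which this peak is attained has width comparable to $\sqrt{\left\vert c_{1}c_{2}\right\vert}/|n|$, so that the contribution of each edge to $\int_{(-\pi,\pi]}\left\vert d_{n}\right\vert^{-1}dt$ stays bounded and in fact this integral tends to $2\pi$ as $\left\vert n\right\vert \to\infty$. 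This yields the absence of ESS at infinity and shows that only finitely many curves $\Gamma_{n}$, those of small index, can carry an ESS, each necessarily finite in number since $d_{n}$ is real-analytic in $t$ with isolated zeros (consistent with Theorem 2, where the stronger conclusion $\mathbb{S}=\emptyset$ holds for $\left\vert ab\right\vert<16/9$). Collecting these indices into the finite set $\mathbb{S}$ and invoking [17, 18] gives the form (14). I would emphasize the contrast with Theorem 1: it is integrability of $\left\vert d_{n}\right\vert^{-1}$, not its boundedness, that matters here, which is exactly why $ab\neq0$ suffices although $\left\vert a\right\vert=\left\vert b\right\vert$ need not hold.

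For the converse I argue by contraposition. If $ab=0$ and the potential is nontrivial we may assume, after the obvious symmetry, that $b=0$ and $a\neq0$ (the case $a=b=0$ being the self-adjoint free operator, excluded as degenerate). Now $c_{2}=0$ while $c_{1}\neq0$, so every resonance block is upper triangular and degenerates to a genuine Jordan block at the crossing value $t_{0}=t_{0}(n)$: the two eigenvectors become parallel and a direct computation gives $d_{n}(t)\asymp(t-t_{0})$ as $t\to t_{0}$. Hence $\left\vert d_{n}(t)\right\vert^{-1}\asymp\left\vert t-t_{0}\right\vert^{-1}$ is non-integrable at $t_{0}$, so $\lambda_{n}(t_{0})$ is an ESS for every large $\left\vert n\right\vert$; choosing intervals $I(s)$ on which $\lambda_{n_{s}}$ is simple, shrinking onto $t_{0}(n_{s})$ with $n_{s}\to\infty$, makes the integral in (13) diverge and produces an ESS at infinity as well. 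Thus all three properties fail, which proves necessity.

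The main obstacle is the rigorous, uniform justification of the two-dimensional reduction: the resonance block is only an approximation to the full operator $H_{t}(a,b)$, and I must control the perturbation from all non-resonant modes well enough to pin down both the exact order of vanishing of $d_{n}(t)$ (square root versus linear) and the area estimate $\int\left\vert d_{n}\right\vert^{-1}dt=O(1)$, \emph{uniformly} in $n$. This requires feeding the Section 3 asymptotics into sharp two-sided bounds for $c_{1},c_{2}$ and for the slope of $\mu_{1}(t)-\mu_{2}(t)$ at the edge, and then separately disposing of the finitely many small indices, where the reduction may fail and where the exceptional set $\mathbb{S}$ actually arises.
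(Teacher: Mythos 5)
Your overall strategy is the paper's: reduce everything to the uniform integrability of $\left\vert d_{n}(t)\right\vert ^{-1}$ near $t=0,\pi$ for large $\left\vert n\right\vert$ (the paper's Theorem 12), then invoke the machinery of [17, 18] (Summary 1(a),(b)) to convert "no ESS at infinity" into the finiteness of ESS and the form (13); your $c_{1},c_{2}$ are the paper's $\beta_{n},\alpha_{n}$ of Lemma 1, and your square-root-versus-linear dichotomy at the crossing is exactly the dividing line between $ab\neq0$ and $ab=0$. You also correctly emphasize that integrability, not boundedness, is the relevant property, which is why $\left\vert a\right\vert =\left\vert b\right\vert$ and (8) are not needed here. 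However, the two steps you defer as "the main obstacle" are precisely where the paper's proof lives, and for one of them your proposed mechanism would not suffice. First, when $\left\vert a\right\vert \neq\left\vert b\right\vert$ the main term $u_{n}(t)\overline{u_{n}^{\ast}(t)}$ in $d_{n}(t)$ is of exponentially small size $\sim\left\vert a/b\right\vert ^{n}$, so the generic remainder $O(n^{-1})$ in (115) swamps it; the paper must first prove Theorem 11, the refined formula (126) with error $O\left(  n^{-1}a^{n}b^{-n}\right)$, obtained by iterating (128) to depth $s\sim n/\ln n$. You flag "control of the non-resonant modes" but give no mechanism that produces an exponentially small, rather than polynomially small, error. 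Second, on the dangerous window $I_{2}$ where $\alpha_{n}\beta_{n}$ is nearly negative real and (102) fails, the paper does \emph{not} stay inside the $2\times2$ resonance block: it switches to the exact global identity (134) expressing $-1/d_{n}(t)$ through $F^{\prime}(\lambda_{n}(t))$ and Bloch-type solutions, locates the critical point $\mu_{n}(t_{0})$ of the discriminant with \emph{complex} $t_{0}$ (Remark 6), and extracts the bound $1/\left\vert d_{n}(t)\right\vert \leq c_{11}n^{-3}\left\vert \sin t_{0}\right\vert ^{-1/2}\left\vert t-t_{0}\right\vert ^{-1/2}$ of Lemma 4 by Taylor expansion of $f(\mu_{n}(t))=2\cos t$. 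Your assertion that the eigenvalues "coincide only at an exceptional point where $d_{n}$ vanishes like a square root" is exactly this lemma, but since $t_{0}$ is complex the eigenvalues never actually collide for real $t$, and a purely perturbative block argument gives no evident lower bound on $\left\vert d_{n}(t)\right\vert$ across the window; this is a genuine gap in the plan, not merely an omitted routine computation.

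Your converse, by contrast, is a legitimately different and essentially correct route: the paper cites [4], [7], [15] for the fact that when $a=0$ or $b=0$ all nonzero $2$-periodic eigenvalues are double with geometric multiplicity $1$, and then invokes Summary 1(c),(e), whereas your direct triangular-block computation giving linear vanishing $d_{n}(t)\asymp(t-t_{0})$, hence a logarithmically divergent integral at every band edge and ESS at infinity, reproves this self-containedly and is consistent with (109)--(110) when $\beta_{n}=0$ or $\alpha_{n}=0$. Your explicit exclusion of $a=b=0$ is in fact more careful than the paper, whose "only if" direction tacitly assumes a nontrivial potential. Two minor corrections: the peak height of $\left\vert d_{n}\right\vert ^{-1}$ when $\left\vert a\right\vert <\left\vert b\right\vert$ is of order $\left\vert b/a\right\vert ^{n}$ (cf.\ Lemma 3), not $\left\vert a/b\right\vert ^{m/2}$; and your claim that $\int_{(-\pi,\pi]}\left\vert d_{n}(t)\right\vert ^{-1}dt\rightarrow2\pi$ is unnecessary --- the paper only needs, and only proves, a uniform bound on this integral.
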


For the remaining potentials we prove the following criterion

\begin{theorem}
The operator $H(a,b)$ has ESS at infinity and infinitely many ESS and its
spectral expansion has the following form if and only if either $a=0$ or
$b=0$
\begin{equation}
f(x)=f_{0}(x)+f_{\pi}(x)+\frac{1}{2\pi}%
%TCIMACRO{\dsum \limits_{n\in\mathbb{Z}}}%
%BeginExpansion
{\displaystyle\sum\limits_{n\in\mathbb{Z}}}
%EndExpansion
\int\nolimits_{B(h)}a_{n}(t)\Psi_{n,t}(x)dt,
\end{equation}
where%
\begin{equation}
2\pi f_{0}(x)=\int\nolimits_{[-h,h]}a_{0}(t)\Psi_{0,t}(x)dt+\sum_{n=1}%
^{\infty}\int\nolimits_{[-h,h]}\left(  a_{-n}(t)\Psi_{-n,t}(x)+a_{n}%
(t)\Psi_{n,t}(x)\right)  dt,
\end{equation}%
\begin{equation}
2\pi f_{\pi}(x)=\sum_{n=0}^{\infty}\int\nolimits_{[\pi-h,\pi+h]}\left(
a_{n}(t)\Psi_{n,t}(x)+a_{-(n+1)}(t)\Psi_{-(n+1),t}(x)\right)  dt,
\end{equation}
$B(h)=[h,\pi-h]\cup\lbrack\pi+h,2\pi-h],$ $0<h<\frac{1}{15\pi}$.
\end{theorem}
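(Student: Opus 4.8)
The plan is to analyze the two extreme potentials $a=0$ and $b=0$, which by the substitution $x \mapsto -x$ are unitarily equivalent, so it suffices to treat (say) $b=0$, giving $q(x)=ae^{-i2\pi x}$. The crucial feature here is that this potential is a single exponential, and for such a potential the operators $H_t$ are very close to a perturbation that can be diagonalized almost explicitly. First I would recall from the asymptotic analysis of Section 3 the behavior of the Floquet discriminant $F(\lambda)$ and of the quantity $d_n(t)$ near the edges $t=0$ and $t=\pi$. For the one-term potential the key phenomenon is that the periodic/antiperiodic eigenvalues are \emph{all} multiple (the spectral gaps all close or all behave degenerately in a specific way), so that at $t=0$ and $t=\pi$ the eigenfunctions $\Psi_{n,t}$ and $\Psi_{n,t}^{\ast}$ become asymptotically orthogonal, forcing $|d_n(t)| \to 0$. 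This is precisely what produces both ESS and ESS at infinity.

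The main body of the proof splits into two directions. For the ``if'' direction (assuming $a=0$ or $b=0$, derive the stated expansion), I would first establish that $|d_{n}(t)|^{-1}$ fails to be integrable near $t=0$ and near $t=\pi$ for infinitely many $n$. Using the asymptotic formulas for $\lambda_n(t)$ and the eigenfunctions near the band edges, I would show that $d_n(t)$ vanishes to first order (like $t$ or like $t-\pi$) as $t$ approaches the edge, so that $|d_n(t)|^{-1} \sim |t|^{-1}$, which is non-integrable; this simultaneously gives ESS at each band edge and, because it happens for arbitrarily large $|n|$, ESS at infinity. This non-integrability is exactly why the naive expansion (9) breaks down and must be replaced by the regularized form (15): the contributions from neighborhoods $[-h,h]$ of $0$ and $[\pi-h,\pi+h]$ of $\pi$ must be collected \emph{as sums first, then integrated}, because the individual integrals over these neighborhoods diverge while the grouped sums converge. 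The precise grouping in (16) and (17) --- pairing $\lambda_{-n}$ with $\lambda_n$ at $t=0$, and $\lambda_n$ with $\lambda_{-(n+1)}$ at $t=\pi$ --- reflects exactly which pairs of eigenvalues collide at each edge, and I would verify this pairing directly from the explicit two-sheeted structure of the discriminant for the single-exponential potential.

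For the ``only if'' direction I would argue the contrapositive: if both $a \neq 0$ and $b \neq 0$, then by Theorem 3 the operator has no ESS at infinity and only finitely many ESS, so its expansion has the form (14) rather than (15), and in particular it does not have ESS at infinity; hence the presence of ESS at infinity together with infinitely many ESS forces $ab=0$. Combined with the standing consideration of the operator (which presupposes a genuine non-self-adjoint Mathieu–Hill potential), this pins down $a=0$ or $b=0$. The hard part will be the quantitative edge analysis establishing the exact vanishing order of $d_n(t)$ at the band edges for the single-exponential potential and, correspondingly, the convergence of the grouped partial sums in (16)–(17): one must show that although each term's integral over $[-h,h]$ (resp.\ $[\pi-h,\pi+h]$) diverges logarithmically, the paired combination $a_{-n}(t)\Psi_{-n,t}+a_{n}(t)\Psi_{n,t}$ has a cancellation making the sum over $n$ converge in $L_2(-\infty,\infty)$. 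Controlling this cancellation uniformly in $n$, and justifying the rearrangement of the double sum/integral, is where the delicate estimates from Section 3 --- and the specific threshold $0<h<\tfrac{1}{15\pi}$ that keeps the neighborhoods small enough for the asymptotics to be valid --- will be essential.
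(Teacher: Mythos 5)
Your ``only if'' direction coincides with the paper's: if $ab\neq0$, Theorem 12 shows $H(a,b)$ has no ESS at infinity, so by Summary 1(b) the number of ESS is finite and the expansion takes the form (13), whence the conjunction in Theorem 4 fails. Your ``if'' direction, however, contains a genuine gap. You argue that at $t=0,\pi$ the eigenfunctions $\Psi_{n,t}$ and $\Psi_{n,t}^{\ast}$ become asymptotically orthogonal, ``forcing $|d_n(t)|\rightarrow 0$'', and that ``this is precisely what produces both ESS and ESS at infinity''. But vanishing of $d_n$ does not produce an ESS: by Definition 4 an ESS requires non-integrability of $\left\vert d_{n}(\cdot)\right\vert ^{-1}$, and a vanishing like $d_n(t)\sim 1/\log|t|^{-1}$ leaves $\left\vert d_{n}\right\vert ^{-1}$ integrable. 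You do assert the needed first-order vanishing $d_n(t)\sim t$, but give no argument for it, and the Section 3 machinery you propose to use degenerates exactly in the case $b=0$: then $\beta_n=0$, $B\equiv 0$, $\varepsilon_n=\sqrt{|\alpha_n\beta_n|}=0$, $D=(4\pi nt+C)^2$, so the interval decomposition of Remark 5 collapses and the estimates of Lemmas 2--4 (which assume $ab\neq0$, e.g.\ via (131)) give no lower bound on the rate at which $d_n$ vanishes. The paper bypasses this entirely: it cites the classical facts ([4], [7], [15]) that for $a=0$ or $b=0$ every 2-periodic eigenvalue $\lambda_n(0)=(2\pi n)^2$ ($n\neq0$) and $\lambda_n(\pi)=(2\pi n+\pi)^2$ is a double eigenvalue of geometric multiplicity one, and then invokes the ready-made criterion of Summary 1(c) (Proposition 4 of [17]): such an eigenvalue is automatically an ESS. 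ESS at infinity then follows immediately from Definitions 4 and 5 because these ESS accumulate at infinity. That criterion is exactly the quantitative input your sketch is missing.

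The same applies to the expansion formula: you plan to prove the convergence of the paired sums and justify the rearrangements from scratch, flagging this as ``the hard part''. The paper does none of this; it quotes from [18] the already-established expansion (14) with the grouped formula (146), and then only regroups the finitely many terms with $|n|\leq N(h)$ using Summary 1(e): the combination $a_k(t)\Psi_{k,t}+a_{-k}(t)\Psi_{-k,t}$ attached to the ESS $\lambda_k(0)$ is integrable on $[-h,h]$ even though each term separately is not, while $a_0(t)\Psi_{0,t}$ is integrable because $\lambda_0(0)$ is a simple eigenvalue; this yields (15), and the analogous regrouping at $t=\pi$ yields (16). Your pairing of $\pm n$ at $t=0$ and of $n$ with $-(n+1)$ at $t=\pi$ is the correct one (it matches (18)--(19)), but the cancellation mechanism you propose to establish uniformly in $n$ is imported wholesale from [17, 18] in the paper, not reproved; also, the bound $h<\frac{1}{15\pi}$ simply traces back to the condition $15\pi\rho<1$ of Remark 1, not to any delicate cancellation threshold. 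In sum, your attempt identifies the correct phenomena, but both load-bearing steps --- the non-integrability of $\left\vert d_{n}\right\vert ^{-1}$ at the band edges and the integrability of the paired combinations --- are left unproved, whereas the paper resolves both by reduction to cited results.
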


Note that if the conditions requested for $H(a,b)$ in Theorem 3 do not hold
then either $a=0$ or $b=0,$ that is, the conditions requested for $H(a,b)$ in
Theorem 4 hold. It means that all cases of the potential (4) are investigated
in Theorem 3 and Theorem 4. In Theorem 2 some subcase of Theorem 3 is studied.

\section{Preliminary Facts}

In this section we present some results of [12, 13, 2] which are used in this paper.

\textbf{Theorem 2 of [12].}\textit{ The eigenvalues }$\lambda_{n}(t)$\textit{
\ and eigenfunctions }$\Psi_{n,t}$\textit{ of the operators }$L_{t}%
(q)$\textit{ for }$t\neq0,\pi,$\textit{ satisfy the following asymptotic
formulas }%
\begin{equation}
\lambda_{n}(t)=(2\pi n+t)^{2}+O(n^{-1}\ln\left\vert n\right\vert ),\text{
}\Psi_{n,t}(x)=e^{i(2\pi n+t)x}+O(n^{-1}).
\end{equation}
\textit{for }$\left\vert n\right\vert \rightarrow\infty.$\textit{ For any
fixed number }$\rho\in(0,\pi/2),$\textit{ these asymptotic formulas are
uniform with respect to }$t$\textit{ in }$[\rho,\pi-\rho]$\textit{. Moreover,
there exists a positive number }$N(\rho),$\textit{ independent of }%
$t,$\textit{ such that the eigenvalues }$\lambda_{n}(t)$\textit{ for \ }%
$t\in\lbrack\rho,\pi-\rho]$\textit{ and }$\mid n\mid>N(\rho)$\textit{ are
simple.}

Note that, the formula\ $f(n,t)=O(h(n))$ is said to be uniform with respect to
$t$ in a set $I$ if there exist positive constants $M$ and $N,$ independent of
$t,$ such that $\mid f(n,t))\mid<M\mid h(n)\mid$ for all $t\in I$ and $\mid
n\mid\geq N.$ We use Remark 2.1 and lot of formulas of [13] that are listed in
Remark 1 and as formulas (20)-(36).

\begin{remark}
In Remark 2.1 of [13] \ we proved that here exists a positive integer $N(0)$
such that the disk $U(n,t,\rho)=:\{\lambda\in\mathbb{C}:\left\vert
\lambda-(2\pi n+t)^{2}\right\vert \leq15\pi n\rho\}$ for $t\in\lbrack0,\rho],$
where $15\pi\rho<1,$ and $n>N(0)$ contains two eigenvalues (counting with
multiplicities) denoted by $\lambda_{n,1}(t)$ and $\lambda_{n,2}(t)$ and these
eigenvalues can be chosen as a continuous function of $t$ on the interval
$[0,\rho].$ Similarly, there exists a positive integer $N(\pi)$ such that the
disk $U(n,t,\rho)$ for $t\in\lbrack\pi-\rho,\pi]$ and $n>N(\pi)$ contains two
eigenvalues (counting with multiplicities) denoted again by $\lambda_{n,1}(t)$
and $\lambda_{n,2}(t)$ that are continuous function of $t$ on the interval
$[\pi-\rho,\pi].$

Thus for $n>$ $N=:\max\left\{  N(\rho),N(0),N(\pi)\right\}  ,$ the eigenvalues
$\lambda_{n,1}(t)$ and $\lambda_{n,2}(t)$ are continuous on $[0,\rho
]\cup\lbrack\pi-\rho,\pi]$ and for$\mid n\mid>N$ \ \textit{the eigenvalue
}$\lambda_{n}(t),$\textit{ defined by (17), is continuous on \ }$[\rho
,\pi-\rho].$\textit{ Moreover, }$\lambda_{n,1}$ and $\lambda_{n,2}$ can be
chosen so that\textit{ }%
\begin{equation}
\lambda_{n,1}(\rho)=\lambda_{-n}(\rho),\text{ }\lambda_{n,2}(\rho)=\lambda
_{n}(\rho),
\end{equation}%
\begin{equation}
\lambda_{n,1}(\pi-\rho)=\lambda_{n}(\pi-\rho),\text{ }\lambda_{n,2}(\pi
-\rho)=\lambda_{-(n+1)}(\pi-\rho).
\end{equation}

Let us redenote $\lambda_{n,1}(t)$ and $\,\lambda_{n,2}(t)$ by $\lambda
_{-n}(t)$ and $\lambda_{n}(t)$ respectively for $n>N$ and $t\in\lbrack
0,\rho].$ Similarly, redenote $\lambda_{n,1}(t)$ and $\,\lambda_{n,2}(t)$ by
$\lambda_{n}(t)$ and $\lambda_{-n-1}(t)$ respectively for $n>N$ and
$t\in\lbrack\pi-\rho,\pi].$ Defining $\lambda_{n}(-t)=\lambda_{n}(t)$ we
obtain continuous function $\lambda_{n}$ on $(-\pi,\pi].$ In this paper we use
both notations: $\lambda_{n}(t)$ and $\lambda_{n,j}(t)$.
\end{remark}

One can readily see that
\begin{equation}
\left\vert \lambda-(2\pi(n-k)+t)^{2}\right\vert >\left\vert k\right\vert
\left\vert 2n-k\right\vert ,\text{ \ }\forall\lambda\in U(n,t,\rho)
\end{equation}
for $k\neq0,2n$ and $t\in\lbrack0,\rho]$, where $\left\vert n\right\vert >N$.

In [13] to obtain the uniform, with respect to $t\in\lbrack0,\rho],$
asymptotic formulas for the eigenvalues $\lambda_{n,j}(t)$ we used (20) and
the iteration of the formula
\begin{equation}
(\lambda_{n,j}(t)-(2\pi n+t)^{2})(\Psi_{n,j,t},e^{i(2\pi n+t)x})=(q\Psi
_{n,j,t},e^{i(2\pi n+t)x}),
\end{equation}
where $\Psi_{n,j,t}$ is any normalized eigenfunction corresponding to
$\lambda_{n,j}(t).$ Iterating (21) infinite times we got the following
formula
\begin{equation}
(\lambda_{n,j}(t)-(2\pi n+t)^{2}-A(\lambda_{n,j}(t),t))u_{n,j}(t)=(q_{2n}%
+B(\lambda_{n,j}(t),t))v_{n,j}(t),
\end{equation}
where $u_{n,j}(t)=(\Psi_{n,j,t},e^{i(2\pi n+t)x}),$ $v_{n,j}(t)=(\Psi
_{n,j,t},e^{i(-2\pi n+t)x}),$ $q_{n}=(q,e^{i2\pi nx}),$
\begin{equation}
A(\lambda,t)=\sum_{k=1}^{\infty}a_{k}(\lambda,t),\text{ }B(\lambda
,t)=\sum_{k=1}^{\infty}b_{k}(\lambda,t),
\end{equation}%
\begin{equation}
a_{k}(\lambda,t)=\sum_{n_{1},n_{2},...,n_{k}}q_{-n_{1}-n_{2}-...-n_{k}}%
%TCIMACRO{\tprod \limits_{s=1}^{k}}%
%BeginExpansion
{\textstyle\prod\limits_{s=1}^{k}}
%EndExpansion
q_{n_{s}}\left(  \lambda-(2\pi(n-n_{1}-..-n_{s})+t)^{2}\right)  ^{-1},
\end{equation}%
\begin{equation}
b_{k}(\lambda,t)=\sum_{n_{1},n_{2},...,n_{k}}q_{2n-n_{1}-n_{2}-...-n_{k}}%
%TCIMACRO{\tprod \limits_{s=1}^{k}}%
%BeginExpansion
{\textstyle\prod\limits_{s=1}^{k}}
%EndExpansion
q_{n_{s}}\left(  \lambda-(2\pi(n-n_{1}-..-n_{s})+t)^{2}\right)  ^{-1}%
\end{equation}
for $\lambda\in U(n,t,\rho)$ (see (37) of [13]).

Similarly, we obtained the formula
\begin{equation}
(\lambda_{n,j}(t)-(-2\pi n+t)^{2}-A^{\prime}(\lambda_{n,j}(t),t))v_{n,j}%
(t)=(q_{-2n}+B^{\prime}(\lambda_{n,j}(t),t))u_{n,j}(t),
\end{equation}
where%
\begin{equation}
A^{\prime}(\lambda,t)=\sum_{k=1}^{\infty}a_{k}^{\prime}(\lambda,t),\text{
}B^{\prime}(\lambda,t)=\sum_{k=1}^{\infty}b_{k}^{\prime}(\lambda,t),
\end{equation}%
\begin{equation}
a_{k}^{\prime}(\lambda,t)=\sum_{n_{1},n_{2},...,n_{k}}q_{-n_{1}-n_{2}%
-...-n_{k}}%
%TCIMACRO{\tprod \limits_{s=1}^{k}}%
%BeginExpansion
{\textstyle\prod\limits_{s=1}^{k}}
%EndExpansion
q_{n_{s}}\left(  \lambda-(2\pi(n+n_{1}+..+n_{s})-t)^{2}\right)  ^{-1},
\end{equation}%
\begin{equation}
b_{k}^{\prime}(\lambda,t)=\sum_{n_{1},n_{2},...,n_{k}}q_{-2n-n_{1}%
-n_{2}-...-n_{k}}%
%TCIMACRO{\tprod \limits_{s=1}^{k}}%
%BeginExpansion
{\textstyle\prod\limits_{s=1}^{k}}
%EndExpansion
q_{n_{s}}\left(  \lambda-(2\pi(n+n_{1}+..+n_{s})-t)^{2}\right)  ^{-1}%
\end{equation}
for $\lambda\in U(n,t,\rho)$ (see (38) of [13]).

The sums in (24), (25) and (28), (29) are taken under conditions $n_{1}%
+n_{2}+...+n_{s}\neq0,2n$ and $n_{1}+n_{2}+...+n_{s}\neq0,-2n$ respectively,
where $s=1,2,...$

Besides, it was proved [13] that the equalities
\begin{equation}
a_{k}(\lambda,t),\text{ }b_{k}(\lambda,t),\text{ }a_{k}^{\prime}%
(\lambda,t),\text{ }b_{k}^{\prime}(\lambda,t)=O\left(  (n^{-1}\ln\left\vert
n\right\vert )^{k}\right)
\end{equation}
hold uniformly for $t\in\lbrack0,\rho]$ and $\lambda\in U(n,t,\rho)$\ (see
(34) and (36) of [13]), and derivatives of these functions with respect to
$\lambda$ are $O(n^{-k-1})$ (see the proof of Lemma 2.5) which imply that the
functions $A(\lambda,t),$ $A^{\prime}(\lambda,t),$ $B(\lambda,t)$ and
$B^{\prime}(\lambda,t)$ are analytic on $U(n,t,\rho).$ Moreover, there exists
a constant $K$ such that
\begin{equation}
\mid A(\lambda,t)\mid<Kn^{-1},\text{ }\mid A^{\prime}(\lambda,t)\mid
<Kn^{-1},\text{ }\mid B(\lambda,t)\mid<Kn^{-1},\text{ }\mid B^{\prime}%
(\lambda,t)\mid<Kn^{-1},
\end{equation}%
\begin{equation}
\mid A(\lambda,t)-A(\mu,t)\mid<Kn^{-2}\mid\lambda-\mu\mid,\text{ }\mid
A^{\prime}(\lambda,t)-A^{\prime}(\mu,t)\mid<Kn^{-2}\mid\lambda-\mu\mid,
\end{equation}%
\begin{equation}
\mid B(\lambda,t)-B(\mu,t)\mid<Kn^{-2}\mid\lambda-\mu\mid,\text{ }\mid
B^{\prime}(\lambda,t)-B^{\prime}(\mu,t)\mid<Kn^{-2}\mid\lambda-\mu\mid,
\end{equation}%
\begin{equation}
\mid C(\lambda,t)\mid<tKn^{-1},\text{ }\mid C(\lambda,t))-C(\mu,t))\mid
<tKn^{-2}\mid\lambda-\mu\mid
\end{equation}
for all $\ n>N,$ $t\in\lbrack0,\rho]$ and $\lambda,\mu\in U(n,t,\rho),$ where
$N$ and $U(n,t,\rho)$ are defined in Remark 1, and
\[
C(\lambda,t)=\frac{1}{2}(A(\lambda,t)-A^{\prime}(\lambda,t))
\]
\textbf{(}see Lemma 2.3 and Lemma 2.5 of [13]\textbf{).}

In this paper we use also the following, uniform with respect to $t\in
\lbrack0,\rho],$ equalities from [13] (see (26)-(28) of [13]) for the
normalized eigenfunction $\Psi_{n,j,t}$:
\begin{equation}
\Psi_{n,j,t}(x)=u_{n,j}(t)e^{i(2\pi n+t)x}+v_{n,j}(t)e^{i(-2\pi n+t)x}%
+h_{n,j,t}(x),
\end{equation}%
\begin{equation}
(h_{n,j,t},e^{i(\pm2\pi n+t)x})=0,\text{ }\left\Vert h_{n,j,t}\right\Vert
=O(n^{-1}),\text{ }\left\vert u_{n,j}(t)\right\vert ^{2}+\left\vert
v_{n,j}(t)\right\vert ^{2}=1+O(n^{-2}).
\end{equation}
Here we also use formula (55) of [2] about estimations of $B(\lambda,0)$ and
$B^{\prime}(\lambda,0)$ as follows: \ 

\textit{ Let the potential has the form (4), }$\lambda=(2\pi n)^{2}%
+z,$\textit{ where \ }$\left\vert z\right\vert <1,$\textit{ and }%
\begin{equation}
p_{n_{1},n_{2},...,n_{k}}(\lambda,0)=q_{2n-n_{1}-n_{2}-...-n_{k}}%
%TCIMACRO{\tprod \limits_{s=1}^{k}}%
%BeginExpansion
{\textstyle\prod\limits_{s=1}^{k}}
%EndExpansion
q_{n_{s}}\left(  \lambda-(2\pi(n-n_{1}-..-n_{s}))^{2}\right)  ^{-1}%
\end{equation}
\textit{be summands of }$b_{k}(\lambda,t)$\textit{ for }$t=0$\textit{ (see
(14)). Then using }(55) of [2] \textit{with }$q\geq2$\textit{ and the
estimation}
\[
\sum_{q\geq2}\left(  _{q}^{n+2q}\right)  \left(  \frac{\left\vert
ab\right\vert }{n^{2}}\right)  ^{q}=O(n^{-2})
\]
\textit{of [2] (see the estimation after formula (55) of [2]) and taking into
account that if }$k$\textit{ changes from }$2n+3$\textit{ to }$\infty
,$\textit{ then the number of steps}$=-2$\textit{ (that is, in our notations
the number of indices }$n_{1},n_{2},...,n_{k}$\textit{ of (37) that are equal
to }$1$\textit{) changes from }$2$\textit{ to }$\infty,$\textit{ we obtain}%

\begin{equation}
\sum_{k=2n+3}^{\infty}\sum_{n_{1},n_{2},...,n_{k}}\left\vert p_{n_{1}%
,n_{2},...,n_{k}}(\lambda,0)\right\vert =b_{2n-1}(\lambda,0)O(n^{-2}).
\end{equation}

\section{On the Operators $L_{t}(q)$ and $H_{t}(a,b).$}

One can readily see from \ (22), (26), (31) and Remark 1 that
\begin{equation}
\lambda_{n,j}(t)\in\left(  d^{-}(r(n),t)\cup d^{+}(r(n),t)\right)  \subset
U(n,t,\rho),
\end{equation}
for all $\ n>N,$ $t\in\lbrack0,\rho],$ where $r(n)=\max\{\left\vert
q_{2n}\right\vert ,\left\vert q_{-2n}\right\vert \}+2Kn^{-1}$ and $\ d^{\pm
}(r(n),t)$ is the disk with center $(\pm2\pi n+t)^{2}$ and radius $r(n).$
Indeed if $\left\vert u_{n,j}(t)\right\vert \geq\left\vert v_{n,j}%
(t)\right\vert $ then using (22), if $\left\vert v_{n,j}(t)\right\vert
>\left\vert u_{n,j}(t)\right\vert ,$ then using (26) and (31) we get (39).

\begin{theorem}
\textit{ A number }$\lambda\in U(n,t,\rho)$ is an eigenvalue of $L_{t}(q)$ for
$t\in\lbrack0,\rho]$ and $n>N,$ where $U(n,t,\rho)$ and $N$ are defined in
Remark \ 1, if and only if
\begin{equation}
(\lambda-(2\pi n+t)^{2}-A(\lambda,t))(\lambda-(2\pi n-t)^{2}-A^{\prime
}(\lambda,t))=(q_{2n}+B(\lambda,t))(q_{-2n}+B^{\prime}(\lambda,t)).
\end{equation}
Moreover\textit{ }$\lambda\in U(n,t,\rho)$ is a double eigenvalue of $L_{t}$
if and only if it is a double root of (40).
\end{theorem}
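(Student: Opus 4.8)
The plan is to read both sides of (40) off the two scalar relations (22) and (26), which hold at every eigenvalue, and then to reverse the resulting determinantal identity by an analytic root-counting (Rouch\'e) argument. Throughout, for $\lambda\in U(n,t,\rho)$ with $n>N$ and $t\in[0,\rho]$ fixed, write
\[
P(\lambda)=\lambda-(2\pi n+t)^2-A(\lambda,t),\quad Q(\lambda)=\lambda-(2\pi n-t)^2-A^{\prime}(\lambda,t),
\]
\[
R(\lambda)=q_{2n}+B(\lambda,t),\quad S(\lambda)=q_{-2n}+B^{\prime}(\lambda,t),
\]
so that (40) reads $\Phi(\lambda):=P(\lambda)Q(\lambda)-R(\lambda)S(\lambda)=0$, while (22) and (26) become $Pu_{n,j}=Rv_{n,j}$ and $Qv_{n,j}=Su_{n,j}$. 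By (31)--(34) the functions $A,A^{\prime},B,B^{\prime}$ are analytic on $U(n,t,\rho)$, hence so is $\Phi$.

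For the forward implication, let $\lambda=\lambda_{n,j}(t)$ be an eigenvalue with normalized eigenfunction $\Psi_{n,j,t}$. By (37)--(38) the pair $(u_{n,j},v_{n,j})$ is nonzero. If both coordinates are nonzero, I multiply $Pu_{n,j}=Rv_{n,j}$ by $Qv_{n,j}=Su_{n,j}$ and cancel $u_{n,j}v_{n,j}$ to obtain $PQ=RS$. If $u_{n,j}=0$ then $v_{n,j}\neq0$ forces $Q=0$ and $R=0$, while if $v_{n,j}=0$ then $P=0$ and $S=0$; in either degenerate case $PQ-RS=0$. Thus every eigenvalue in $U(n,t,\rho)$ satisfies (40).

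For the converse I would count the zeros of $\Phi$ by Rouch\'e, comparing it with the leading quadratic $M(\lambda)=(\lambda-(2\pi n+t)^2)(\lambda-(2\pi n-t)^2)$. Its two roots $(2\pi n\pm t)^2$ lie in $U(n,t,\rho)$, since $|(2\pi n+t)^2-(2\pi n-t)^2|=8\pi nt\le 8\pi n\rho<15\pi n\rho$. On the boundary $|\lambda-(2\pi n+t)^2|=15\pi n\rho$ the triangle inequality gives $|\lambda-(2\pi n-t)^2|\ge 7\pi n\rho$, hence $|M(\lambda)|\ge 105\pi^2 n^2\rho^2$; expanding $\Phi-M$ and invoking the bounds (31) together with $q_{\pm 2n}\to 0$ shows $|\Phi-M|=O(\rho)+o(1)$ on this circle. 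Since the lower bound for $|M|$ grows like $n^2$ while the perturbation stays bounded, enlarging $N$ secures $|\Phi-M|<|M|$ on $\partial U(n,t,\rho)$, so $\Phi$ has exactly two zeros there counted with multiplicity---the same total that Remark 1 assigns to the eigenvalues of $L_t$ in the disk.

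It remains to identify the two zero-multisets. The forward step already places the eigenvalue points among the zeros of $\Phi$; for the reverse inclusion I note that $\Phi(\lambda)=\det\begin{pmatrix}P&-R\\-S&Q\end{pmatrix}$, so a zero $\nu$ of $\Phi$ yields a nonzero pair $(u,v)$ solving the reduced $2\times 2$ system, from which an eigenfunction $ue^{i(2\pi n+t)x}+ve^{i(-2\pi n+t)x}+h$ is reconstructed through the very iteration (35) that produced (22) and (26) in [13]; hence $\nu$ is an eigenvalue. With equal supports and equal total mass two, the zero-multiset of $\Phi$ and the eigenvalue-multiset coincide, which simultaneously yields the equivalence in (40) and shows that $\lambda$ is a double eigenvalue precisely when both zeros collapse to it, i.e.\ when $\lambda$ is a double root of (40). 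I expect the main obstacle to be exactly this last multiplicity bookkeeping: checking that the order of a zero of $\Phi$ equals the algebraic multiplicity of the corresponding eigenvalue, where the determinantal (Lyapunov--Schmidt) structure of (22), (26) must be used rather than the point-set correspondence alone.
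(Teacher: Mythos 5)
Your proposal is correct and, for most of its length, runs parallel to the paper's own proof: the forward direction (the case split on $u_{n,j}=0$, $v_{n,j}=0$, and $u_{n,j}v_{n,j}\neq0$, multiplying (22) and (26) side by side and cancelling) is exactly the paper's first paragraph, and your Rouch\'e step uses the paper's very comparison function $G(\lambda,t)=(\lambda-(2\pi n+t)^{2})(\lambda-(2\pi n-t)^{2})$ on $\partial U(n,t,\rho)$ --- your explicit constants ($\left\vert \lambda-(2\pi n-t)^{2}\right\vert \geq 7\pi n\rho$, hence $\left\vert G\right\vert \geq 105\pi^{2}n^{2}\rho^{2}$, against a perturbation of size $O(\rho)+o(1)$ coming from (31) and $q_{\pm2n}\rightarrow0$) simply flesh out what the paper dismisses with ``using (31) one can easily verify''. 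Where you genuinely diverge is the endgame. The paper finishes by pure counting: the eigenvalues (exactly two with multiplicity, by Remark 1) lie among the roots of (40) (exactly two with multiplicity, by Rouch\'e), and it disposes of the double-eigenvalue/double-root equivalence in a short separate paragraph (a double eigenvalue leaves no room for other eigenvalues, hence no other roots). You instead propose a constructive reverse inclusion --- rebuilding an eigenfunction $ue^{i(2\pi n+t)x}+ve^{i(-2\pi n+t)x}+h$ from any root of $\Phi$ through the Lyapunov--Schmidt structure behind (22), (26) --- and then close with the clean observation that two multisets with equal support and equal total mass two must coincide. This buys something real: bare counting identifies the two multisets only when the two eigenvalues in the disk are distinct, since a double eigenvalue could a priori share the root budget with a spurious simple root elsewhere in $U(n,t,\rho)$; the paper's treatment of that case leans on the equivalence it has just asserted, whereas your reconstruction, if carried out, would settle it independently. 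The one caveat is that this reconstruction (convergence of the reversed iteration to a genuine eigenfunction, and the matching of the order of the zero of $\Phi$ with the algebraic multiplicity of the eigenvalue) is left as a sketch --- but you flag it honestly as the delicate point rather than claiming it done, and the counting fallback covers the generic case, so no step of your argument would actually fail.
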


\begin{proof}
If $u_{n,j}(t)=0,$ then by (36) we have$\ v_{n,j}(t)\neq0$.\ Therefore, (22)
and (26) imply that
\[
q_{2n}+B(\lambda_{n,j}(t),t)=0,\text{ }\lambda_{n,j}(t)-(-2\pi n+t)^{2}%
-A^{\prime}(\lambda_{n,j}(t),t)=0,
\]
that is, the right-hand side and the left-hand side of (40)\ vanish \ when
$\lambda$ is replaced by$\ \lambda_{n,j}(t)$. Hence $\lambda_{n,j}(t)$
satisfies (40). In the same way we prove that if $\ \ v_{n,j}(t)=0$ then
$\lambda_{n,j}(t)$ is a \ root of (40). It remains to consider the case
$u_{n,j}(t)v_{n,j}(t)\neq0.$ In this case multiplying (22) and (26) side by
side and canceling $u_{n,j}(t)v_{n,j}(t)$ we get an equality obtained from
(40) by replacing $\lambda$ with $\lambda_{n,j}(t).$\ Thus, in any case
$\lambda_{n,j}(t)$ is a root of\ (40).

Now we prove that the roots of (40) lying in $U(n,t,\rho)$\ are the
eigenvalues of $L_{t}(q).$ Let $F(\lambda,t)$ be the left-hand side minus the
right-hand side of (40). Using (31) one can easily verify that the inequality
\
\begin{equation}
\mid F(\lambda,t)-G(\lambda,t)\mid<\mid G(\lambda,t)\mid,
\end{equation}
where $G(\lambda,t)=(\lambda-(2\pi n+t)^{2})(\lambda-(2\pi n-t)^{2}),$ holds
for all $\lambda$ from the boundary of $U(n,t,\rho).$ Since the function
$(\lambda-(2\pi n+t)^{2})(\lambda-(2\pi n-t)^{2})$ has two roots in \ the
\ set $U(n,t,\rho),$ by the Rouche's theorem from (41) we obtain that
$F(\lambda,t)$ has two roots in the same\ set.\ Thus\ $L_{t}(q)$ has two
eigenvalue (counting with multiplicities) lying in $U(n,t,\rho)$ (see Remark
1) that are the roots of (40). On the other hand, (40) has preciously two
roots (counting with multiplicities) in $U(n,t,\rho).$ Therefore $\lambda\in
U(n,t,\rho)$ is an eigenvalue of $L_{t}(q)$ if and only if (40) holds.

If \textit{ }$\lambda\in U(n,t,\rho)$ is a double eigenvalue of $L_{t}(q),$
then by Remark 1 $L_{t}(q)$ has no other eigenvalues\textit{ }in\textit{
}$U(n,t,\rho)$ and hence (40) has no other roots. This implies that $\lambda$
is a double root of (40). By the same argument one can prove that if $\lambda$
is a double root of (40) then it is a double eigenvalue of $L_{t}(q)$
\end{proof}

One can readily verify that equation (40) can be written in the form
\begin{equation}
(\lambda-(2\pi n+t)^{2}-\frac{1}{2}(A+A^{\prime})+4\pi nt)^{2}=D,
\end{equation}
where
\begin{equation}
D(\lambda,t)=(4\pi nt)^{2}+q_{2n}q_{-2n}+8\pi ntC+C^{2}+q_{2n}B^{\prime
}+q_{-2n}B+BB^{\prime}%
\end{equation}
and, for brevity, we denote $C(\lambda,t),$ $\ B(\lambda,t),$ $\ A(\lambda,t)$
etc. by $C,$ $\ B,$ $A$ etc. It is clear that $\lambda$ is a root of (42) if
and only if \ it satisfies at least one of the equations
\begin{equation}
\lambda-(2\pi n+t)^{2}-\frac{1}{2}(A(\lambda,t)+A^{\prime}(\lambda,t))+4\pi
nt=-\sqrt{D(\lambda,t)}%
\end{equation}
and
\begin{equation}
\lambda-(2\pi n+t)^{2}-\frac{1}{2}(A(\lambda,t)+A^{\prime}(\lambda,t))+4\pi
nt=\sqrt{D(\lambda,t)},
\end{equation}
where
\begin{equation}
\sqrt{D}=\sqrt{\left\vert D\right\vert }e^{(\arg D)/2},\text{ }-\pi<\arg
D\leq\pi.
\end{equation}

\begin{remark}
It is clear from the construction of $D(\lambda,t)$ that this function is
continuous with respect to $(\lambda,t)$ for $t\in\lbrack0,\rho]$ and
$\lambda\in U(n,t,\rho).$ Moreover, by Remark 1 the eigenvalues $\lambda
_{n,1}(t)$ and $\lambda_{n,2}(t)$ continuously depend on $t\in\lbrack0,\rho].$
Therefore $D(\lambda_{n,j}(t),t)$ for $n>N$ and $j=1,2$ is a continuous
functions of $t\in\lbrack0,\rho].$ By (43), (34), (23), (27) and (30) we have
\[
D(\lambda_{n,j}(\rho),\rho)=(4\pi nt)^{2}+o(1),\text{ }A(\lambda_{n,j}%
(\rho),\rho)+A^{\prime}(\lambda_{n,j}(\rho),\rho)=o(1)
\]
as $n\rightarrow\infty.$ Therefore by (18) and \textit{Theorem 2 of [12]} the
eigenvalues $\lambda_{n,1}(\rho)$ and $\lambda_{n,2}(\rho)$ are simple,
$\lambda_{n,1}(\rho),$ satisfies (44) and $\lambda_{n,2}(\rho)$ satisfies
(45). If $\lambda_{n,1}(t)$ and $\lambda_{n,2}(t)$ are simple for $t\in\lbrack
t_{0},\rho],$ where $0\leq t_{0}\leq\rho,$ then these functions are analytic
function on $[t_{0},\rho]$ and $\lambda_{n,1}(t)\neq\lambda_{n,2}(t)$ for all
$t\in\lbrack t_{0},\rho]$.
\end{remark}

\begin{theorem}
Suppose that $\sqrt{D(\lambda_{n,j}(t),t))}$ continuously depends on $t$ at
$[t_{0},\rho]$ and
\begin{equation}
D(\lambda_{n,j}(t),t)\neq0,\text{ }\forall t\in\lbrack t_{0},\rho]
\end{equation}
for $n>N$ and $j=1,2,$ where $\rho$ and $N$ are defined in Remark \ 1 and
$\sqrt{D}$ is defined in (46) and $0\leq t_{0}\leq\rho$. Then for $t\in\lbrack
t_{0},\rho]$ the eigenvalues $\lambda_{n,1}(t)$ and $\lambda_{n,2}(t)$ defined
in Remark 1 are simple, $\lambda_{n,1}(t)$ satisfies (44) and $\lambda
_{n,2}(t),$ satisfies (45). That is
\begin{equation}
\lambda_{n,j}(t)=(2\pi n+t)^{2}+\frac{1}{2}(A(\lambda_{n,j},t)+A^{\prime
}(\lambda_{n,j},t))-4\pi nt+(-1)^{j}\sqrt{D(\lambda_{n,j},t)}%
\end{equation}
for $t\in\lbrack t_{0},\rho],$ $n>N$ and $j=1,2.$
\end{theorem}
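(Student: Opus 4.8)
The plan is to show that on the whole interval $[t_0,\rho]$ each eigenvalue remains on the branch of (42) that it occupies at the right endpoint $t=\rho$, and that the two branches never meet. For brevity write $P(\lambda,t)=\lambda-(2\pi n+t)^{2}-\tfrac12(A(\lambda,t)+A'(\lambda,t))+4\pi nt$, so that (42) reads $P(\lambda,t)^{2}=D(\lambda,t)$ and equations (44), (45) become $P=-\sqrt{D}$ and $P=+\sqrt{D}$. By the eigenvalue criterion (40) (equivalently (42)) proved above, for $n>N$ and $t\in[0,\rho]$ the only eigenvalues of $L_t(q)$ in $U(n,t,\rho)$ are $\lambda_{n,1}(t),\lambda_{n,2}(t)$, and each is a root of (42); hence each satisfies at least one of (44), (45). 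Since the hypothesis gives $D(\lambda_{n,j}(t),t)\neq0$, we have $\sqrt{D(\lambda_{n,j}(t),t)}\neq0$, so the right-hand sides $-\sqrt D$ and $+\sqrt D$ are distinct and each $\lambda_{n,j}(t)$ satisfies exactly one of the two equations.

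Fixing $j=1$, I would introduce the two residuals
\[
g_{1}(t)=P(\lambda_{n,1}(t),t)+\sqrt{D(\lambda_{n,1}(t),t)},\qquad g_{2}(t)=P(\lambda_{n,1}(t),t)-\sqrt{D(\lambda_{n,1}(t),t)}.
\]
Both are continuous on $[t_0,\rho]$: the map $t\mapsto\lambda_{n,1}(t)$ is continuous by Remark 1, the functions $A,A'$ depend continuously on $(\lambda,t)$ (being the uniformly convergent series controlled by (30), as already used for $D$ in Remark 2), and $t\mapsto\sqrt{D(\lambda_{n,1}(t),t)}$ is continuous by hypothesis. Because $\lambda_{n,1}(t)$ is a root of (42) we have $g_1(t)g_2(t)=P^{2}-D=0$, while $g_1(t)-g_2(t)=2\sqrt{D(\lambda_{n,1}(t),t)}\neq0$; thus at every $t$ exactly one of $g_1(t),g_2(t)$ vanishes. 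Hence the zero sets $Z_1=g_1^{-1}(0)$ and $Z_2=g_2^{-1}(0)$ are disjoint, closed, and cover $[t_0,\rho]$. Since $[t_0,\rho]$ is connected, one of them is empty. By Remark 2 the eigenvalue $\lambda_{n,1}(\rho)$ satisfies (44), i.e. $g_1(\rho)=0$, so $Z_1\neq\varnothing$; therefore $Z_1=[t_0,\rho]$ and $g_1\equiv0$, meaning $\lambda_{n,1}(t)$ satisfies (44) for all $t\in[t_0,\rho]$. Running the identical argument with $\lambda_{n,2}(t)$, whose endpoint value $\lambda_{n,2}(\rho)$ satisfies (45) by Remark 2, gives that $\lambda_{n,2}(t)$ satisfies (45) throughout; these two statements are precisely (48) for $j=1$ and $j=2$.

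It then remains to prove simplicity. Suppose $\lambda_{n,1}(t^{*})=\lambda_{n,2}(t^{*})=:\lambda^{*}$ for some $t^{*}\in[t_0,\rho]$. By what was just shown, $\lambda^{*}$ satisfies (44) as a value of $\lambda_{n,1}$ and also (45) as a value of $\lambda_{n,2}$, so $-\sqrt{D(\lambda^{*},t^{*})}=P(\lambda^{*},t^{*})=+\sqrt{D(\lambda^{*},t^{*})}$, forcing $D(\lambda^{*},t^{*})=0$, which contradicts the hypothesis. Hence $\lambda_{n,1}(t)\neq\lambda_{n,2}(t)$ for all $t\in[t_0,\rho]$; as these are the only two eigenvalues of $L_t(q)$ in $U(n,t,\rho)$ counted with multiplicity, each is simple.

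The delicate point is the connectedness step, and it is exactly there that both hypotheses enter: the assumption $D(\lambda_{n,j}(t),t)\neq0$ keeps the branches $\pm\sqrt D$ separated, so that $g_1$ and $g_2$ cannot vanish simultaneously, while the continuity of $t\mapsto\sqrt{D(\lambda_{n,j}(t),t)}$ prevents the selected branch of the square root from jumping as $t$ varies. Without this continuity the residuals need not be continuous and the branch labels could be exchanged along $[t_0,\rho]$; with it, the base values at $t=\rho$ furnished by Remark 2 pin down the correspondence $\lambda_{n,1}\leftrightarrow(44)$, $\lambda_{n,2}\leftrightarrow(45)$ once and for all.
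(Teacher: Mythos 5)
Your proof is correct and takes essentially the same route as the paper: both anchor the branch assignment at $t=\rho$ via Remark 2, use the continuity of $\sqrt{D(\lambda_{n,j}(t),t)}$ together with the hypothesis $D\neq0$ to rule out a branch switch on $[t_0,\rho]$, and obtain simplicity by observing that a double eigenvalue would satisfy both (44) and (45), forcing $D=0$. The paper phrases the no-switching step as a contradiction involving sequences $p_{n}\rightarrow t$ and $q_{n}\rightarrow t$ landing on different branches, which is precisely your connectedness argument for the disjoint closed zero sets $Z_{1}$ and $Z_{2}$ covering the interval.
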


\begin{proof}
By Remark 2, the eigenvalues $\lambda_{n,1}(\rho)$ and $\lambda_{n,2}(\rho)$
are simple, $\lambda_{n,1}(\rho)$ satisfies (44) and $\lambda_{n,2}(\rho)$
satisfies (45). Let us we prove that $\lambda_{n,1}(t)$ satisfies (44) for all
$t\in\lbrack t_{0},\rho]$. Suppose to the contrary that this claim is not
true. Then there exists $t\in\lbrack t_{0},\rho)$ and the sequences
$p_{n}\rightarrow t$ and $q_{n}\rightarrow t,$ where one of them may be a
constant sequence, such that $\lambda_{n,1}(p_{n})$ and $\lambda_{n,1}(q_{n})$
satisfy (44) and (45) respectively. Using the continuity of $\sqrt
{(D(\lambda_{n,j}(t),t))}$, we conclude that $\lambda_{n,1}(t)$ satisfies both
(44) and (45). However, it is possible only if $D(\lambda_{n,1}(t),t)=0$ which
contradicts (47). Hence $\lambda_{n,1}(t)$ satisfies (44) for all $t\in\lbrack
t_{0},\rho]$. In the same way we prove that $\lambda_{n,2}(t)$ satisfies (45)
for all $t\in\lbrack t_{0},\rho]$. If $\lambda_{n,1}(t)=\lambda_{n,2}(t)$ for
some value of $t\in\lbrack t_{0},\rho]$, that is if $\lambda_{n,j}(t)$ is a
double eigenvalue then it satisfies both (44) and (45) which again contradicts (47)
\end{proof}

Now we study the operator $H_{t}.$ Note that we consider only the case
$t\in\lbrack0,\rho]$ due to the following reason. The case $t\in\lbrack
\rho,\pi-\rho]$ was considered in [12]. \ The case $t\in\lbrack\pi-\rho,\pi]$
is similar to the case $t\in\lbrack0,\rho]$ and we explain it in Remark 3.
Besides, the eigenvalues of $H_{-t}$ coincides with the eigenvalues of
$H_{t}.$

When the potential $q$ has the form (4) then
\begin{equation}
q_{-1}=a,\text{ }q_{1}=b,\text{ }q_{n}=0,\text{ }\forall n\neq\pm1
\end{equation}
and hence the formulas (22), (26), (42) and (43) have the form
\begin{equation}
(\lambda_{n,j}(t)-(2\pi n+t)^{2}-A(\lambda_{n,j}(t),t))u_{n,j}(t)=B(\lambda
_{n,j}(t),t)v_{n,j}(t),
\end{equation}%
\begin{equation}
(\lambda_{n,j}(t)-(-2\pi n+t)^{2}-A^{\prime}(\lambda_{n,j}(t),t))v_{n,j}%
(t)=B^{\prime}(\lambda_{n,j}(t),t)u_{n,j}(t),
\end{equation}%
\begin{equation}
(\lambda-(2\pi n+t)^{2}-\frac{1}{2}(A(\lambda,t)+A^{\prime}(\lambda,t))+4\pi
nt)^{2}=D(\lambda,t),
\end{equation}%
\begin{equation}
D(\lambda,t)=(4\pi nt+C(\lambda,t))^{2}+B(\lambda,t)B^{\prime}(\lambda,t).
\end{equation}
Moreover, by Theorem 5,\textit{ }$\lambda\in U(n,t,\rho)$ is a double
eigenvalue of $H_{t}$ if and only if it satisfies (52) and the equation
\begin{equation}
2(\lambda-(2\pi n+t)^{2}-\frac{1}{2}(A+A^{\prime})+4\pi nt)^{2}(1-\frac{1}%
{2}\frac{\partial}{\partial\lambda}(A+A^{\prime}))=\frac{\partial}%
{\partial\lambda}(D(\lambda,t)).
\end{equation}
By, (39) and (49) $\lambda_{n,j}(t)\in\left(  d^{-}(2Kn^{-1},t)\cup
d^{+}(2Kn^{-1},t)\right)  \subset U(n,t,\rho).$ Therefore the formula
\begin{equation}
\lambda_{n,j}(t)=(2\pi n)^{2}+O(n^{-1})
\end{equation}
holds uniformly, with respect to $t\in\lbrack0,n^{-2}],$ for $j=1,2,$ i.e.,
there exist positive constants $M$ and $N$ such that $\mid\lambda
_{n,j}(t)-(2\pi n)^{2}\mid<Mn^{-1}$for $n\geq N$ \ and $t\in\lbrack0,n^{-2}].$

Let us consider the functions taking part in (50)-(52). From (49) we see that
the indices in formulas (24), (25) for the case (4) satisfy the conditions
\begin{equation}
\{n_{1},n_{2},...,n_{k}\}\subset\{-1,1\},\text{ }n_{1}+n_{2}+...+n_{s}%
\neq0,2n,
\end{equation}%
\begin{equation}
\{n_{1},n_{2},...,n_{k},2n-n_{1}-n_{2}-...-n_{k}\}\subset\{-1,1\},\text{
}n_{1}+n_{2}+...+n_{s}\neq0,2n
\end{equation}
for $s=1,2,...,k$ respectively. Hence, by (49) $q_{-n_{1}-n_{2}-...-n_{k}}=0$
if $k$ is an even number. Therefore, by (24) and (28)%
\begin{equation}
a_{2m}(\lambda,t)=0,\text{ }a_{2m}^{\prime}(\lambda,t)=0,\text{ }\forall
m=1,2,...
\end{equation}
Since the indices $n_{1},n_{2},...,n_{k}$ take two values (see (56)) the
number of the summands in the right-hand side of (24) is not more than
$2^{k}.$ Clearly, these summands for $k=2m-1$ have the form
\[
a_{k}(\lambda,n_{1},n_{2},...,n_{k},t)=:(ab)^{m}%
%TCIMACRO{\tprod \limits_{s=1,2,...,k}}%
%BeginExpansion
{\textstyle\prod\limits_{s=1,2,...,k}}
%EndExpansion
\left(  \lambda-(2\pi(n-n_{1}-n_{2}-...-n_{s})+t)^{2}\right)  ^{-1}%
\]
(see (24), (49) and (56)). Therefore, we have
\begin{equation}
\text{ }a_{2m-1}(\lambda_{n,j}(t),t)=(4ab)^{m}O(n^{-2m+1}).
\end{equation}
If $t\in\lbrack0,n^{-2}],$ then one can readily see that
\[
a_{1}(\lambda_{n,j},t)=\frac{ab}{(2\pi n)^{2}+O(n^{-1})-(2\pi(n-1))^{2}}%
+\frac{ab}{(2\pi n)^{2}+O(n^{-1})-(2\pi(n+1))^{2}}%
\]%
\[
=\frac{ab}{2\pi(2\pi(2n-1)}-\frac{ab}{2\pi(2\pi(2n+1)}+O\left(  \frac{1}%
{n^{3}}\right)  =O\left(  \frac{1}{n^{2}}\right)  .
\]
The same estimations for $a_{2m-1}^{\prime}(\lambda_{n,j}(t),t)$ and
$a_{1}^{\prime}(\lambda_{n,j}(t),t)$ hold respectively. Thus, by (23), (27),
(30) and (58), we have
\begin{equation}
A(\lambda_{n,j}(t),t)=O(n^{-2}),\text{ }A^{\prime}(\lambda_{n,j}%
(t),t)=O(n^{-2}),\text{ }\forall t\in\lbrack0,n^{-2}].\text{ }%
\end{equation}

Now we study the functions $B(\lambda,t)$ and $B^{\prime}(\lambda,t)$ (see
(23), (25) and (27), (29)). First let us consider $b_{2n-1}(\lambda,t).$ If
$k=2n-1,$ then by (57) $n_{1}=n_{2}=...=n_{2k-1}=1.$ Using this and (49) in
(25) for $k=2n-1,$ we obtain
\begin{equation}
b_{2n-1}(\lambda,t)=b^{2n}%
%TCIMACRO{\tprod \limits_{s=1}^{2n-1}}%
%BeginExpansion
{\textstyle\prod\limits_{s=1}^{2n-1}}
%EndExpansion
\left(  \lambda-(2\pi(n-s)+t)^{2}\right)  ^{-1}.
\end{equation}
If\ $k<2n-1$ or $k=2m,$ then, by (49), $q_{2n-n_{1}-n_{2}-...-n_{k}}=0$ and by
(25)
\begin{equation}
\text{ }b_{k}(\lambda,t)=0.
\end{equation}
In the same way, from (29) we obtain
\begin{equation}
b_{2n-1}^{\prime}(\lambda,t)=a^{2n}%
%TCIMACRO{\tprod \limits_{s=1}^{2n-1}}%
%BeginExpansion
{\textstyle\prod\limits_{s=1}^{2n-1}}
%EndExpansion
\left(  \lambda-(2\pi(n-s)-t)^{2}\right)  ^{-1},\text{ \ }b_{k}^{\prime
}(\lambda_{n,j}(t),t)=0
\end{equation}
for\ $k<2n-1$ or $k=2m.$ Now, (30), (62) and (63) imply that the equalities
\begin{equation}
B(\lambda,t)=O\left(  n^{-5}\right)  ,\text{ }B^{\prime}(\lambda,t)=O\left(
n^{-5}\right)
\end{equation}
hold uniformly for $t\in\lbrack0,\rho]$ and $\lambda\in U(n,t,\rho).$ From
(50) and (51) (if $\left\vert u_{n,j}(t)\right\vert \geq\left\vert
v_{n,j}(t)\right\vert $ then use (50) and if $\left\vert v_{n,j}(t)\right\vert
>\left\vert u_{n,j}(t)\right\vert $ then use (51)) by using (60) and (64) we
obtain that the formula
\begin{equation}
\lambda_{n,j}(t)=(2\pi n)^{2}+O(n^{-2})
\end{equation}
holds uniformly, with respect to $t\in\lbrack0,n^{-3}],$ for $j=1,2.$

More detail estimations of $B$ and $B^{\prime}$ are given in the following
lemma, where we use the following notation. We say that $a_{n}$ is of order of
$b_{n}$ and write $a_{n}\sim b_{n}$ if $a_{n}=O(b_{n})$ and $b_{n}=O(a_{n})$
as $n\rightarrow\infty.$

\begin{lemma}
If $q$\ has the form (4), then the formulas
\begin{equation}
B(\lambda,t)=\beta_{n}\left(  1+O(n^{-2})\right)  ,\text{ }B^{\prime}%
(\lambda,t)=\alpha_{n}\left(  1+O(n^{-2})\right)  ,
\end{equation}%
\begin{equation}
\frac{\partial}{\partial\lambda}(B^{\prime}(\lambda,t)B(\lambda,t))\sim
\alpha_{n}\beta_{n}n^{-1}\ln\left\vert n\right\vert ,
\end{equation}
where $\beta_{n}=b^{2n}\left(  (2\pi)^{2n-1}(2n-1)!\right)  ^{-2}$ ,
$\alpha_{n}=a^{2n}\left(  (2\pi)^{2n-1}(2n-1)!\right)  ^{-2}$ hold uniformly
for
\begin{equation}
t\in\lbrack0,n^{-3}],\text{ }\lambda=(2\pi n)^{2}+O(n^{-2}).
\end{equation}

\end{lemma}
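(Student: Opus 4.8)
The plan is to isolate in each series $B=\sum_k b_k$ and $B^{\prime}=\sum_k b_k^{\prime}$ its single dominant term $b_{2n-1}$ (respectively $b_{2n-1}^{\prime}$), evaluate it exactly, and bound the rest. By (63) the nonzero terms are only $b_{2n-1},b_{2n+1},b_{2n+3},\dots$. First I would evaluate $b_{2n-1}$: by (62) it equals $b^{2n}\prod_{s=1}^{2n-1}(\lambda-(2\pi(n-s)+t)^2)^{-1}$, and writing $(2\pi(n-s)+t)^2=(2\pi(n-s))^2+4\pi(n-s)t+t^2$ with $|n-s|\le n-1$, $t\in[0,n^{-3}]$, $\lambda=(2\pi n)^2+O(n^{-2})$, each factor becomes $(2\pi)^2 s(2n-s)+O(n^{-2})$ uniformly in $s$. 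The identity $\prod_{s=1}^{2n-1}s(2n-s)=((2n-1)!)^2$ produces the main product $((2\pi)^{2n-1}(2n-1)!)^2$, while the relative error from the shifts is $\sum_{s=1}^{2n-1}O(n^{-2})(s(2n-s))^{-1}=O(n^{-3}\ln n)$, using $\sum_{s=1}^{2n-1}(s(2n-s))^{-1}=n^{-1}\sum_{s=1}^{2n-1}s^{-1}\sim n^{-1}\ln n$. Hence $b_{2n-1}=\beta_n(1+O(n^{-2}))$, and the same computation with $a,-t$ replacing $b,t$ gives $b_{2n-1}^{\prime}=\alpha_n(1+O(n^{-2}))$.

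Next I would discard the remaining terms. The tail $\sum_{k\ge 2n+3}b_k$ is $b_{2n-1}O(n^{-2})=\beta_n O(n^{-2})$ by (39). For $b_{2n+1}$ the admissible strings (by (57)) lie in $\{-1,1\}$ with exactly one entry equal to $-1$ whose partial sums avoid $0$ and $2n$, which forces that entry into a position $j$ with $3\le j\le 2n$; comparing the denominator multiset with that of $b_{2n-1}$ shows the $j$th summand equals $b_{2n-1}\,ab\,(2\pi)^{-4}[(j-2)(j-1)(2n-j+2)(2n-j+1)]^{-1}(1+o(1))$. Summing over $j$ gives $O(n^{-2})$ (the sum is dominated by its two ends, where one pair of factors is of order $(2n)^2$ and $\sum_m (m(m+1))^{-1}<\infty$), so $b_{2n+1}=\beta_n O(n^{-2})$. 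Adding the three pieces yields $B=\beta_n(1+O(n^{-2}))$, and symmetrically $B^{\prime}=\alpha_n(1+O(n^{-2}))$, which is (66).

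For (67) I would differentiate logarithmically, since (66) alone gives nothing ($\beta_n,\alpha_n$ are $\lambda$-independent). From (62), $\frac{\partial}{\partial\lambda}b_{2n-1}=-b_{2n-1}S_n$ with $S_n=\sum_{s=1}^{2n-1}(\lambda-(2\pi(n-s)+t)^2)^{-1}$, and the factor expansion above gives $S_n=(2\pi)^{-2}\sum_{s=1}^{2n-1}(s(2n-s))^{-1}(1+o(1))\sim n^{-1}\ln|n|$; the analogous $S_n^{\prime}$ for $b_{2n-1}^{\prime}$ has the same leading order. Then $\frac{\partial}{\partial\lambda}(B^{\prime}B)=B^{\prime}\frac{\partial}{\partial\lambda}B+B\frac{\partial}{\partial\lambda}B^{\prime}=-\alpha_n\beta_n(S_n+S_n^{\prime})(1+o(1))$, and since $S_n,S_n^{\prime}$ are both real and positive to leading order, no cancellation occurs and $\frac{\partial}{\partial\lambda}(B^{\prime}B)\sim\alpha_n\beta_n n^{-1}\ln|n|$.

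The hardest part is the two-sided estimate in (67): I must secure a genuine lower bound of order $n^{-1}\ln|n|$, not just an upper bound. This rests on $\lambda=(2\pi n)^2+O(n^{-2})$, which forces each $\lambda-(2\pi(n-s)+t)^2$ to equal $(2\pi)^2 s(2n-s)$ up to relative error $o(1)$, so the summands of $S_n$ are real positive to leading order and do not cancel. In parallel I must check that the remainder derivatives $\frac{\partial}{\partial\lambda}b_{2n+1}$ and $\frac{\partial}{\partial\lambda}\sum_{k\ge 2n+3}b_k$ are negligible: because every partial sum avoids $0$ and $2n$, every denominator is at least of order $n$, each logarithmic-derivative sum is $O(n^{-1}\ln|n|)$, and these remainder derivatives are $\beta_n O(n^{-2})\cdot O(n^{-1}\ln|n|)$, small against $\beta_n n^{-1}\ln|n|$.
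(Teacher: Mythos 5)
Your proposal is correct in substance and follows the same skeleton as the paper's proof --- isolate the dominant term $b_{2n-1}$ (resp.\ $b_{2n-1}^{\prime}$), treat $b_{2n+1}$ by the combinatorial observation that exactly one index equals $-1$ and must sit in a position $j$ with $3\le j\le 2n$ (this matches the paper's $n_{s+1}=-1$, $s=2,\dots,2n-1$), and dispose of the deep tail $k\ge 2n+3$ by the estimate imported from [2], which is the paper's formula (38), not (39) as you cite --- but two of your local arguments genuinely differ from the paper's. First, where you expand each factor as $(2\pi)^{2}s(2n-s)+O(n^{-2})$ uniformly in $s$ and control the relative error by $O(n^{-2})\sum_{s}(s(2n-s))^{-1}=O(n^{-3}\ln n)$, the paper instead evaluates $b_{2n-1}$ exactly at $(\lambda,t)=((2\pi n)^{2},0)$, sandwiches $\left\vert b_{2n-1}(\lambda,t)\right\vert$ between values at $t=0$ via the monotone comparison (70)--(71), and transfers to nearby $\lambda$ by a Taylor expansion using $O(\beta_{n})$ bounds on the higher derivatives; your direct expansion is cleaner and also extends the $t=0$ statement of (38) to $t\in[0,n^{-3}]$ without the paper's separate $\lambda_{3},\lambda_{4}$ comparison step. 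Second, for (67) you make explicit what the paper leaves implicit: the logarithmic derivatives $-S_{n}$, $-S_{n}^{\prime}$ are real of one sign to leading order (all denominators are $\approx(2\pi)^{2}s(2n-s)>0$), so the two terms of $B^{\prime}\partial_{\lambda}B+B\partial_{\lambda}B^{\prime}$ reinforce rather than cancel. The paper merely says (67) ``follows from (73) and (75)'' without addressing cancellation, so your treatment adds rigor precisely where the two-sided relation $\sim$ could conceivably fail.

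One step of yours is stated too optimistically, though your conclusion survives. For the derivative of the deep tail you assert that ``each logarithmic-derivative sum is $O(n^{-1}\ln\vert n\vert)$'' because every denominator is of order at least $n$. That is true for $b_{2n-1}$ and $b_{2n+1}$, whose partial sums run essentially once through $1,\dots,2n-1$, but it fails for a general admissible string of length $k\ge 2n+3$: the partial sums may revisit the extreme values $1$ or $2n-1$ many times (e.g.\ by oscillating $+1,-1,+1,\dots$ near $s=1$), and the correct per-string bound is only $O(k/n)$. This is harmless, because each extra pair of indices costs a factor of order $\vert ab\vert n^{-2}$, so summing over $k$ with this geometric decay still gives a tail derivative $\beta_{n}O(n^{-2})=o\left(\beta_{n}n^{-1}\ln\vert n\vert\right)$; alternatively you can argue as the paper does in (80), applying Cauchy's inequality to the tail bounds (78)--(79), which hold on the larger region $\lambda=(2\pi n)^{2}+O(n^{-1})$ of (76), to get $\beta_{n}O(n^{-1})$ --- still negligible against the main term $\beta_{n}n^{-1}\ln\vert n\vert$. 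With that one line repaired (and the citation corrected to (38)), your proof is complete.
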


\begin{proof}
Using (61) and (63) by direct calculations we get
\begin{equation}
b_{2n-1}((2\pi n)^{2},0)=\beta_{n},\text{ }b_{2n-1}^{\prime}((2\pi
n)^{2},0)=\alpha_{n}.
\end{equation}
If $1\leq s\leq2n-1$ then for any $(\lambda,t)$ satisfying (68) there exists
$\lambda_{1}=(2\pi n)^{2}+O(n^{-2})$ and

$\lambda_{2}=(2\pi n)^{2}+O(n^{-2})$ such that
\begin{equation}
\mid\lambda_{1}-(2\pi(n-s))^{2}\mid<\mid\lambda-(2\pi(n-s)+t)^{2}\mid
<\mid\lambda_{2}-(2\pi(n-s))^{2}\mid.
\end{equation}
Therefore from (61) we obtain that
\begin{equation}
\left\vert b_{2n-1}(\lambda_{1},0)\right\vert <\left\vert b_{2n-1}%
(\lambda,t)\right\vert <\left\vert b_{2n-1}(\lambda_{2},0)\right\vert .
\end{equation}
On the other hand, differentiating (61) with respect to $\lambda,$ we conclude
that
\begin{equation}
\frac{\partial}{\partial\lambda}(b_{2n-1}((2\pi n)^{2},0))=b_{2n-1}((2\pi
n)^{2},0)\sum_{s=1}^{2n-1}\frac{1+O(n^{-1})}{s(2n-s)}.
\end{equation}
Now taking into account that the last summation is of order $n^{-1}%
\ln\left\vert n\right\vert $ and using (69), we get
\begin{equation}
\frac{\partial}{\partial\lambda}b_{2n-1}((2\pi n)^{2},0))\sim\beta_{n}%
n^{-1}\ln\left\vert n\right\vert .\text{ }%
\end{equation}
Arguing as above one can easily see that the $m$-th derivative, where
$m=2,3,...,$ of $b_{2n-1}(\lambda,0)$ is $O(\beta_{n}).$ Hence using the
Taylor series of $b_{2n-1}(\lambda,0)$ for $\lambda=(2\pi n)^{2}+O(n^{-2})$
about $(2\pi n)^{2},$ we obtain
\[
b_{2n-1}(\lambda_{i},0)=\beta_{n}(1+O(n^{-2})),\forall i=1,2.
\]
This with (71) yields%
\begin{equation}
b_{2n-1}(\lambda,t)=\beta_{n}(1+O(n^{-2}))
\end{equation}
for all $(\lambda,t)$ satisfying (68). In the same way, we get
\begin{equation}
\frac{\partial}{\partial\lambda}b_{2n-1}^{\prime}((2\pi n)^{2},0))\sim
\alpha_{n}(\frac{\ln n}{n}),\text{ }b_{2n-1}^{\prime}(\lambda,t)=\alpha
_{n}(1+O(n^{-2})).\text{ }%
\end{equation}

Now let us consider $b_{2n+1}(\lambda,t).$ By (57) the indices $n_{1}%
,n_{2},...,n_{2n+1}$ taking part in $b_{2n+1}(\lambda,t)$ are $\ 1$ except
one, say $n_{s+1}=-1,$ where $s=2,3,...,2n-1.$ Moreover, if $n_{s+1}=-1,$ then
$n_{1}+n_{2}+...+n_{s+1}=n_{1}+n_{2}+...+n_{s-1}=s-1$ and

$n_{1}+n_{2}+...+n_{s+2}=n_{1}+n_{2}+...+n_{s}=s.$ Therefore, by (25),
$b_{2n+1}(\lambda,t)$ for
\begin{equation}
\lambda=(2\pi n)^{2}+O(n^{-1}),\text{ }t\in\lbrack0,n^{-3}]
\end{equation}
has the form
\[
b_{2n-1}(\lambda,t)%
%TCIMACRO{\tsum \limits_{s=2}^{2n-1}}%
%BeginExpansion
{\textstyle\sum\limits_{s=2}^{2n-1}}
%EndExpansion
\frac{ab}{(2\pi n)^{2}-(2\pi(n-s+1))^{2}+O(n^{-1}))(2\pi n)^{2}-(2\pi
(n-s))^{2}+O(n^{-1}))}.
\]
One can easily see that the last sum is $O(n^{-2}).$ Thus we have
\begin{equation}
b_{2n+1}(\lambda,t)=b_{2n-1}(\lambda,t)O(n^{-2})=\beta_{n}O(n^{-2}))
\end{equation}
for all $(\lambda,t)$ satisfying (76).

Now let us estimate $b_{k}(\lambda,t)$ for $k>2n+1$. Since the sums in (25)
are taken under conditions (57), we conclude that $1\leq n_{1}+n_{2}%
+\cdots+n_{s}\leq2n-1.$ Using this instead of $1\leq s\leq2n-1$ and repeating
the proof of (71) we obtain that for any $(\lambda,t)$ satisfying (76) there
exists
\[
\lambda_{3}=(2\pi n)^{2}+O(n^{-1})\text{ }\And\text{\ }\lambda_{4}=(2\pi
n)^{2}+O(n^{-1})
\]
such that%
\[
\left\vert p_{n_{1},n_{2},...,n_{k}}(\lambda_{3},0)\right\vert <\left\vert
p_{n_{1},n_{2},...,n_{k}}(\lambda,t)\right\vert <\left\vert p_{n_{1}%
,n_{2},...,n_{k}}(\lambda_{4},0)\right\vert ,\text{ }\forall k<2n-1,
\]
where $p_{n_{1},n_{2},...,n_{k}}(\lambda,0)$ is defined in (37). This with
(38) and (77) implies that
\begin{equation}
\sum_{k=2n+1}^{\infty}\left\vert b_{k}(\lambda,t)\right\vert =\beta
_{n}O(n^{-2})
\end{equation}
for all $(\lambda,t)$ satisfying (76). In the same way, we obtain
\begin{equation}
\sum_{k=2n+1}^{\infty}\left\vert b_{k}^{\prime}(\lambda,t)\right\vert
=\alpha_{n}O(n^{-2}).
\end{equation}
Thus (66) follows from (74), (75), (78) and (79).

Now we prove (67). It follows from (78), (79) and the Cauchy's inequality
that
\begin{equation}
\frac{\partial}{\partial\lambda}\left(  \sum_{k=2n+1}^{\infty}b_{k}%
(\lambda,t)\right)  =\beta_{n}O(n^{-1}),\text{ }\frac{\partial}{\partial
\lambda}\left(  \sum_{k=2n+1}^{\infty}b_{k}^{\prime}(\lambda,t)\right)
=\alpha_{n}O(n^{-1}).
\end{equation}
Therefore (67) follows from (73) and (75).
\end{proof}

From Lemma 1 it easily follows the following statement.

\begin{theorem}
If $\lambda_{n,j}(t)$ for $t\in\lbrack0,\rho]$ is a multiple eigenvalue of
$H_{t},$ then
\begin{equation}
(4\pi nt)^{2}=-\beta_{n}\alpha_{n}\left(  1+O(n^{-2})\right)  .
\end{equation}

\end{theorem}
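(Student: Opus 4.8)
The plan is to extract, from the two equations that define a double eigenvalue, a single scalar identity and then read off the leading-order behavior using Lemma~1. Recall from the discussion following Theorem~5 that $\lambda_{n,j}(t)$ is a double eigenvalue of $H_{t}$ precisely when it satisfies both (52) and (55). First I would set $w=:\lambda_{n,j}(t)-(2\pi n+t)^{2}-\tfrac12(A+A^{\prime})+4\pi nt$, so that (52) reads $w^{2}=D(\lambda,t)$ with $D$ given by (53). Differentiating (52) (equivalently, using (55)) at the double eigenvalue gives a second relation; combining the two allows me to eliminate $w$ and obtain a relation purely among $D$ and $\partial D/\partial\lambda$ evaluated at $\lambda_{n,j}(t)$.

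Next I would bring in the estimates of Lemma~1. By (66) we have $B=\beta_{n}(1+O(n^{-2}))$ and $B^{\prime}=\alpha_{n}(1+O(n^{-2}))$, so $BB^{\prime}=\alpha_{n}\beta_{n}(1+O(n^{-2}))$, while (67) controls $\tfrac{\partial}{\partial\lambda}(B^{\prime}B)\sim\alpha_{n}\beta_{n}n^{-1}\ln\lvert n\rvert$. From (60) the term $C=\tfrac12(A-A^{\prime})=O(n^{-2})$, and its $\lambda$-derivative is smaller still by (34)--(35). Substituting the expansion (53) for $D$, the dominant contributions are the $(4\pi nt)^{2}$ term and the $B B^{\prime}$ term; the cross term $8\pi ntC$ and the pure $C^{2}$ term are lower order in the regime (68) where $t\in[0,n^{-3}]$. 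The derivative relation coming from (55), after the same substitutions, forces the $\partial_\lambda$-contributions of $A+A^{\prime}$ and of $D$ to be negligible against the leading balance, and what survives is exactly the requirement that the right-hand side of (53) vanish to leading order at the double eigenvalue.

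Carrying this out, the double-eigenvalue condition reduces to $D(\lambda_{n,j}(t),t)=0$ together with its derivative vanishing, which at leading order is $(4\pi nt)^{2}+\alpha_{n}\beta_{n}=o(\alpha_{n}\beta_{n})$; writing the error relatively gives precisely
\[
(4\pi nt)^{2}=-\beta_{n}\alpha_{n}\bigl(1+O(n^{-2})\bigr),
\]
which is (81). The one point that needs care is confirming that the $t$-range is consistent: since $\alpha_{n}\beta_{n}\to 0$ super-exponentially, the identity forces $t$ to be extremely small, well inside $[0,n^{-3}]$, so Lemma~1 does apply and the argument is not circular. I expect the main obstacle to be verifying that the $\lambda$-derivative relation from (55) genuinely collapses to the vanishing of $D$ rather than introducing a competing term of the same order; this requires checking that $\partial_\lambda D\sim\alpha_n\beta_n n^{-1}\ln\lvert n\rvert$ from (67) multiplies a factor $w=\sqrt{D}=O(\sqrt{\alpha_n\beta_n})$ that is small enough for the product to be absorbed into the $O(n^{-2})$ error, which is where the precise orders in Lemma~1 are indispensable.
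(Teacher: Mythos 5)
Your overall strategy coincides with the paper's own proof: a double eigenvalue satisfies (52) together with the derivative condition (54) (you cite (55), which is the asymptotic formula $\lambda_{n,j}(t)=(2\pi n)^{2}+O(n^{-1})$ --- the equation you mean is (54)); eliminating $w$ between the two gives exactly the paper's relation (82), namely $4D(\lambda_{n}(t),t)\bigl(1-\tfrac{1}{2}\partial_{\lambda}(A+A^{\prime})\bigr)^{2}=\bigl(\partial_{\lambda}D(\lambda_{n}(t),t)\bigr)^{2}$, and then Lemma 1 converts this into $(4\pi nt)^{2}(1+O(n^{-2}))+\beta_{n}\alpha_{n}(1+O(n^{-2}))=(4\pi nt)^{2}O(n^{-4})+\bigl(O(\alpha_{n}\beta_{n}n^{-1}\ln\vert n\vert)\bigr)^{2}$, from which (81) follows because $(\alpha_{n}\beta_{n})^{2}n^{-2}\ln^{2}\vert n\vert=\alpha_{n}\beta_{n}\cdot o(n^{-2})$. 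Your closing remark about absorbing the $(\partial_{\lambda}D)^{2}$ contribution is the right bookkeeping, and your observation that the double-eigenvalue condition forces $D$ to vanish only \emph{to leading order} (not exactly) is consistent with (82).

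The genuine gap is the step you flag but do not carry out: Lemma 1 is valid only under hypothesis (68), i.e.\ $t\in[0,n^{-3}]$ and $\lambda=(2\pi n)^{2}+O(n^{-2})$, whereas the theorem concerns an arbitrary multiple eigenvalue with $t\in[0,\rho]$. Your justification --- that ``the identity forces $t$ to be extremely small, so the argument is not circular'' --- is circular as stated, since the identity whose error terms you control via Lemma 1 is available only after $t\in[0,n^{-3}]$ is known. The paper closes this loop with a preliminary pass through the same relation (82) using only the crude bounds that hold uniformly on all of $[0,\rho]$: by (64) and (33) one has $BB^{\prime}=O(n^{-10})$ and $\partial_{\lambda}(B^{\prime}B)=O(n^{-7})$, giving (87)--(88) and hence (89), $4(4\pi nt)^{2}(1+O(n^{-2}))=(4\pi nt)^{2}O(n^{-4})+O(n^{-8})$, which forces $(4\pi nt)^{2}=O(n^{-8})$ and in particular $t\in[0,n^{-3}]$; then (65) yields $\lambda_{n}(t)=(2\pi n)^{2}+O(n^{-2})$, so (68) holds and Lemma 1 may legitimately be invoked for the second, refined pass. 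Without this two-stage bootstrap your argument does not apply to a hypothetical double eigenvalue with $t$ of order, say, $n^{-1}$ or $\rho$, which is precisely what must be excluded. Adding the first pass (it reuses your own eliminated equation, just with the cruder estimates) makes your proposal complete and essentially identical to the paper's proof.
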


\begin{proof}
If $\lambda_{n,1}(t)=\lambda_{n,2}(t)=:\lambda_{n}(t)$ is a multiple
eigenvalue, then as it is noted in the above, it satisfies (52) and (54) from
which we obtain%
\begin{equation}
4D(\lambda_{n}(t),t)\left(  1-\frac{1}{2}\frac{\partial}{\partial\lambda
}(A(\lambda_{n}(t),t)+A^{\prime}(\lambda_{n}(t),t))\right)  ^{2}=\left(
\frac{\partial}{\partial\lambda}D(\lambda_{n}(t),t)\right)  ^{2}.
\end{equation}
By (32) and (34) we have
\begin{equation}
\frac{\partial}{\partial\lambda}(A(\lambda_{n}(t),t)+A^{\prime}(\lambda
_{n}(t),t))=O(n^{-2}),
\end{equation}%
\begin{align}
(4\pi nt+C(\lambda_{n}(t),t))^{2}  &  =(4\pi nt)^{2}(1+O(n^{-2})),\text{ }\\
\frac{\partial}{\partial\lambda}(4\pi nt+C(\lambda_{n}(t),t))^{2}  &  =(4\pi
nt)^{2}(1+O(n^{-2}))O(n^{-3})
\end{align}
for $t\in\lbrack0,\rho]$. On the other hand, it follows from (64) and (33)
that%
\begin{equation}
B(\lambda_{n}(t),t)B^{\prime}(\lambda_{n}(t),t)=O(n^{-10}),\text{ }%
\frac{\partial}{\partial\lambda}(B^{\prime}(\lambda_{n}(t),t)B(\lambda
_{n}(t),t))=O(n^{-7}).
\end{equation}
Therefore from (53) and (84)-(86) we obtain
\begin{equation}
D((\lambda_{n}(t),t))=(4\pi nt)^{2}(1+O(n^{-2}))+O(n^{-10})
\end{equation}
and
\begin{equation}
\frac{\partial}{\partial\lambda}(D(\lambda_{n}(t),t))=(4\pi nt)^{2}%
(1+O(n^{-2}))O(n^{-3})+O(n^{-7}).
\end{equation}
Using the equalities (83), (87) and (88) in (82) we get
\begin{equation}
4(4\pi nt)^{2}(1+O(n^{-2}))=(4\pi nt)^{2}O(n^{-4})+O(n^{-8}).
\end{equation}
Hence, we have $t\in\lbrack0,n^{-3}].$ Then by (65), $t$ and $\lambda
=:\lambda_{n}(t)$ satisfy (68) and by Lemma 1
\begin{equation}
B(\lambda_{n}(t),t)=\beta_{n}\left(  1+O(n^{-2})\right)  ,\text{ }B^{\prime
}(\lambda_{n}(t),t)=\alpha_{n}\left(  1+O(n^{-2})\right)  ,
\end{equation}%
\begin{equation}
\frac{\partial}{\partial\lambda}(B^{\prime}(\lambda_{n}(t),t)B(\lambda
_{n}(t),t))\sim\alpha_{n}\beta_{n}n^{-1}\ln\left\vert n\right\vert .
\end{equation}
Therefore by (53), (84) and (85) we have
\begin{equation}
D((\lambda_{n}(t),t))=(4\pi nt)^{2}(1+O(n^{-2}))+\beta_{n}\alpha_{n}\left(
1+O(n^{-2})\right)
\end{equation}
and
\begin{equation}
\frac{\partial}{\partial\lambda}(D(\lambda_{n}(t),t))=(4\pi nt)^{2}%
(1+O(n^{-2}))O(n^{-3})+O(\alpha_{n}\beta_{n}n^{-1}\ln\left\vert n\right\vert
).
\end{equation}
Now using (83), (92) and (93) in (82) we obtain
\[
(4\pi nt)^{2}(1+O(n^{-2}))+\beta_{n}\alpha_{n}\left(  1+O(n^{-2})\right)
=(4\pi nt)^{2}O(n^{-4})+\left(  O(\alpha_{n}\beta_{n}n^{-1}\ln\left\vert
n\right\vert \right)  )^{2}%
\]
which implies (81)
\end{proof}

Note that in (92) the terms $O(n^{-2})$ don't depend on $t$, i.e, there exists
$c>0$ such that
\begin{equation}
\left\vert O(n^{-2})\right\vert <cn^{-2}%
\end{equation}
for all $t\in\lbrack0,n^{-3}].$ Henceforward, for brevity of notation,
$1+O(n^{-2})$ is denoted by $[1].$

Now we are ready to prove the main result of this section by using Theorems 6
and 7.

\begin{theorem}
Let $\mathbb{S}$ be the set of integer $n>N$ such that
\begin{equation}
-\pi+3cn^{-2}\leq\arg(\beta_{n}\alpha_{n})\leq\pi-3cn^{-2}%
\end{equation}
and $\left\{  t_{n}:n>N\right\}  $ be a sequence defined as follows: $t_{n}=0$
if $n\in$ $\mathbb{S}$ and
\begin{equation}
(4\pi nt_{n})^{2}(1-cn^{-2})=-(1+cn^{-2}+n^{-3})\operatorname{Re}(\beta
_{n}\alpha_{n})
\end{equation}
if $n\notin$ $\mathbb{S},$ where $c$ is defined in (94). Then the eigenvalues
$\lambda_{n,1}(t)$ and $\lambda_{n,2}(t)$ defined in Remark 1 are simple and
satisfy (48) for $t\in\lbrack t_{n},\rho].$
\end{theorem}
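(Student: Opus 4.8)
The goal is to verify the hypotheses of Theorem 6 — namely that $\sqrt{D(\lambda_{n,j}(t),t)}$ is continuous on $[t_n,\rho]$ and that $D(\lambda_{n,j}(t),t)\neq 0$ there — because once these are established, Theorem 6 immediately yields that $\lambda_{n,1}(t),\lambda_{n,2}(t)$ are simple and satisfy (48). So the entire task reduces to showing the non-vanishing condition (47) holds on $[t_n,\rho]$, with the continuity of the square root following from the non-vanishing together with the continuity of $D(\lambda_{n,j}(t),t)$ noted in Remark 2. The plan is to split into the two cases $n\in\mathbb{S}$ and $n\notin\mathbb{S}$ dictated by the definition of $t_n$, and in each case to show $D\neq 0$ using the refined expansion (92).

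First I would recall from (92) that for $t\in[0,n^{-3}]$ we have $D(\lambda_n(t),t)=(4\pi nt)^2[1]+\beta_n\alpha_n[1]$, where $[1]=1+O(n^{-2})$ with the bound (94). For $t\in[n^{-3},\rho]$ the term $(4\pi nt)^2$ dominates: since $\beta_n\alpha_n$ is super-exponentially small (of order $|ab|^{2n}/((2\pi)^{2n-1}(2n-1)!)^4$) while $(4\pi nt)^2\geq(4\pi n^{-2})^2$, the quantity $D$ is a nonzero number close to $(4\pi nt)^2\neq 0$, so (47) is automatic on that subinterval. Hence the only delicate range is $t\in[0,n^{-3}]$, and there I would use (92) directly. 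For $n\in\mathbb{S}$ we have $t_n=0$, and the condition (95) says $\arg(\beta_n\alpha_n)$ stays at least $3cn^{-2}$ away from $\pm\pi$; this is precisely engineered so that the perturbation $[1]=1+O(n^{-2})$ cannot rotate $\beta_n\alpha_n$ onto the negative real axis, and the two terms $(4\pi nt)^2[1]$ (which lies near the positive real axis) and $\beta_n\alpha_n[1]$ cannot cancel, giving $D\neq 0$ for all $t\in[0,\rho]$.

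For $n\notin\mathbb{S}$, the argument of $\beta_n\alpha_n$ is within $3cn^{-2}$ of $\pm\pi$, so $\operatorname{Re}(\beta_n\alpha_n)<0$ and $\beta_n\alpha_n[1]$ is essentially a negative real number that $(4\pi nt)^2[1]$ could potentially cancel. This is exactly why $t_n$ is defined by (96): the equation $(4\pi nt_n)^2(1-cn^{-2})=-(1+cn^{-2}+n^{-3})\operatorname{Re}(\beta_n\alpha_n)$ pinpoints the value of $t$ at which cancellation of the real parts can occur, and the factors $(1-cn^{-2})$ and $(1+cn^{-2}+n^{-3})$ are chosen as worst-case bounds so that for $t\geq t_n$ the real part of $(4\pi nt)^2[1]$ strictly exceeds the magnitude of $\operatorname{Re}(\beta_n\alpha_n[1])$. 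Thus on $[t_n,\rho]$ the real part of $D$ stays strictly positive (or the imaginary parts fail to cancel), forcing $D\neq 0$.

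The main obstacle I expect is the careful bookkeeping in the $n\notin\mathbb{S}$ case: one must track the $O(n^{-2})$ errors in both occurrences of $[1]$ in (92) — which are independent of $t$ by (94) — and show that the choice (96) genuinely separates $\operatorname{Re}\big((4\pi nt)^2[1]\big)$ from $-\operatorname{Re}\big(\beta_n\alpha_n[1]\big)$ for every $t\geq t_n$, while simultaneously confirming that the imaginary part of $\beta_n\alpha_n[1]$, being $O(n^{-2})\cdot|\beta_n\alpha_n|$, cannot be matched by the imaginary part of $(4\pi nt)^2[1]$ near the crossover. Once $D\neq 0$ is secured on $[t_n,\rho]$, continuity of $\sqrt{D(\lambda_{n,j}(t),t)}$ follows from Remark 2 and the single-valuedness of the branch (46) away from the negative real axis, and Theorem 6 closes the argument.
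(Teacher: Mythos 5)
Your plan is correct and is essentially the paper's own proof: for $n\notin\mathbb{S}$ the paper shows $\operatorname{Re}D(\lambda_{n,j}(t))>0$ on $[t_n,\rho]$ by combining the crude bounds (53), (64), (84) for $t\geq n^{-3}$ with (92), (94), the failure of (95) (giving $\operatorname{Im}(\beta_n\alpha_n)=O(n^{-2})\operatorname{Re}(\beta_n\alpha_n)$) and the definition (96) of $t_n$ on $[t_n,n^{-3}]$, while for $n\in\mathbb{S}$ it uses exactly your argument-separation (parallelogram-law) observation to get $-\pi<\arg D<\pi$ and $D\neq0$ on $[0,\rho]$, then concludes via Theorem 6. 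Two cosmetic points: on $[n^{-3},\rho]$ the correct justification is $BB'=O(n^{-10})$ from (64) rather than Lemma 1 (whose formula (92) is only valid for $t\in[0,n^{-3}]$ and $\lambda=(2\pi n)^2+O(n^{-2})$), and in the $n\notin\mathbb{S}$ case the strict positivity of $\operatorname{Re}D$ alone suffices, so your fallback imaginary-part bookkeeping is unnecessary.
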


\begin{proof}
Let $n\notin$ $\mathbb{S}.$ It follows from (53), (64) and (84) that if $t\geq
n^{-3}$ then
\begin{equation}
\operatorname{Re}D(\lambda_{n,j}(t))>0.
\end{equation}
If $t\in\lbrack0,n^{-3}]$ then we have formula (92). Since the terms
$O(n^{-2})$ in (92) satisfy (94) we have the following estimate for the real
part of the first term in the right-hand side of (92):
\begin{equation}
\operatorname{Re}((4\pi nt)^{2}\left[  1\right]  )>(4\pi nt)^{2}%
(1-cn^{-2})\geq(4\pi nt_{n})^{2}(1-cn^{-2})
\end{equation}
for $t\in\lbrack t_{n},n^{-3}].$ On the other hand if $n\notin$ $\mathbb{S}$
then by the definition of $\mathbb{S}$ (95) does not hold, which implies that
\begin{equation}
\beta_{n}\alpha_{n}=-\left\vert (\beta_{n}\alpha_{n})\right\vert e^{i\theta
},\text{ }\left\vert \theta\right\vert <3cn^{-2},\text{ }\operatorname{Im}%
(\beta_{n}\alpha_{n})=O(n^{-2})\operatorname{Re}(\beta_{n}\alpha_{n}).
\end{equation}
Using this and (94), we obtain the following estimate for the real part of the
second term in the right-hand side of (92)%
\[
\left\vert \operatorname{Re}(\beta_{n}\alpha_{n}\left[  1\right]  )\right\vert
<(1+cn^{-2}+n^{-3})\left\vert \operatorname{Re}(\beta_{n}\alpha_{n}%
)\right\vert .
\]
Therefore it follows from (98), (96) and (92) that (97) holds for $t\in\lbrack
t_{n},n^{-3}]$ , $n>N$ and $n\notin$ $\mathbb{S}.$ Thus (97) is true for all
$t\in\lbrack t_{n},\rho]$. Hence $\sqrt{D(\lambda_{n,j}(t)))}$ is well-defined
and by Remark 2 it continuously depends on $t$. Therefore the proof follows
from Theorem 6.

Now consider the case $n\in$ $\mathbb{S}.$ By (94) we have
\[
-cn^{-2}-n^{-3}<\arg(\left[  1\right]  )<cn^{-2}+n^{-3}.
\]
Using (99) and (94) we obtain
\[
-\pi+2cn^{-2}-n^{-3}<\arg(\beta_{n}\alpha_{n}\left[  1\right]  )<\pi
-2cn^{-2}+n^{-3},
\]%
\[
-cn^{-2}-n^{-3}<\arg((4\pi nt)^{2}\left[  1\right]  <cn^{-2}+n^{-3}%
\]
and the acute angle between the vectors $(4\pi nt)^{2}\left[  1\right]  $ and
$\beta_{n}\alpha_{n}\left[  1\right]  $ is less than $\pi.$ Therefore by the
parallelogram law of vector addition we have
\[
-\pi<\arg(D(\lambda_{n,j}(t)))<\pi,\text{ }D(\lambda_{n,j}(t)))\neq0
\]
for $t\in\lbrack0,\rho].$ Thus the proof again follows from Theorem 6
\end{proof}

\begin{corollary}
If the relation
\begin{equation}
\text{ }\inf_{q,p\in\mathbb{N}}\{\mid2q\alpha-(2p-1)\mid\}\neq0,
\end{equation}
holds, then there exists $c_{2}\in(0,1)$ and $c_{3}\in(0,1)$ such that for all
$n>N$ the relations
\begin{align}
-\pi+c_{2}  &  <\arg(\alpha_{n}\beta_{n})<\pi-c_{2},\\
\left\vert \operatorname{Im}(\alpha_{n}\beta_{n})\right\vert  &
>c_{3}\left\vert \operatorname{Re}(\alpha_{n}\beta_{n})\right\vert
\end{align}
hold and the eigenvalues $\lambda_{n,1}(t)$ and $\lambda_{n,2}(t)$ are simple
and satisfy (48) for $t\in\lbrack0,\rho]$, where $\lambda_{n,1}(t)$,
$\lambda_{n,2}(t)$ and $N$ are defined in Remark 1.
\end{corollary}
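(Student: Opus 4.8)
The plan is to reduce the two geometric inequalities and the simplicity claim to a single computation of $\arg(\alpha_n\beta_n)$, and then to feed the result into Theorem 8. First I would use the explicit formulas of Lemma 1 to factor
\[
\alpha_n\beta_n=(ab)^{2n}\left((2\pi)^{2n-1}(2n-1)!\right)^{-4},
\]
in which the second factor is a positive real number. Hence $\arg(\alpha_n\beta_n)\equiv 2n\arg(ab)\equiv 2n\pi\alpha\pmod{2\pi}$, where I have used $\alpha=\pi^{-1}\arg(ab)$. This recasts everything about $\alpha_n\beta_n$ as a statement about the position of the real number $2n\alpha$ relative to the integers: writing $\phi_n$ for the principal value of $2n\pi\alpha$, the number $\phi_n$ is near $\pm\pi$ exactly when $2n\alpha$ is near an odd integer, and near $0$ exactly when $2n\alpha$ is near an even integer.

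Second, I would read off (101) directly from the hypothesis. Set $\delta:=\inf_{q,p\in\mathbb{N}}|2q\alpha-(2p-1)|>0$; then $2n\alpha$ stays at distance at least $\delta$ from every odd integer, for every $n$. Since a perturbation of $2n\alpha$ by $\varepsilon$ perturbs $\phi_n$ by $\pi\varepsilon$, this gives $\bigl|\,|\phi_n|-\pi\,\bigr|\geq\pi\delta$ for all $n>N$, so any sufficiently small $c_2\in(0,1)$ (for instance $c_2<\pi\delta$) yields (101). To obtain (102) I would pass from the argument to the quotient of imaginary and real parts: writing $\alpha_n\beta_n=Re^{i\phi_n}$ with $R>0$, one has $\operatorname{Im}(\alpha_n\beta_n)/\operatorname{Re}(\alpha_n\beta_n)=\tan\phi_n$, so (102) is equivalent to $|\tan\phi_n|>c_3$. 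Thus (102) demands that $\phi_n$ avoid the whole real axis, i.e. both $0$ and $\pi$; the separation from $\pi$ is again furnished by (100), whereas the separation from $0$ is a genuinely different requirement that I return to below.

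Third, I would combine (101) with Theorem 8. Because $c_2$ is a fixed positive constant while $3cn^{-2}\to0$, for all sufficiently large $n$ one has $3cn^{-2}<c_2$, and then (101) forces $-\pi+3cn^{-2}\leq\arg(\beta_n\alpha_n)\leq\pi-3cn^{-2}$, that is $n\in\mathbb{S}$. For such $n$ Theorem 8 gives $t_n=0$, hence the simplicity of $\lambda_{n,1}(t)$ and $\lambda_{n,2}(t)$ together with (48) on the full interval $[t_n,\rho]=[0,\rho]$. Enlarging $N$ to absorb the finitely many indices with $3cn^{-2}\geq c_2$ then completes the simplicity assertion.

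The step I expect to be the main obstacle is the derivation of (102). The hypothesis (100) controls only how close $2n\alpha$ can come to the odd integers, which is exactly what keeps $\phi_n$ off $\pm\pi$ and thereby yields (101) and excludes multiple eigenvalues through the relation $(4\pi nt)^2=-\beta_n\alpha_n(1+O(n^{-2}))$ of Theorem 7. The inequality (102), in contrast, additionally asks that $\phi_n$ avoid $0$, i.e. that $2n\alpha$ avoid the even integers, and this does not follow from (100) at face value. I would therefore expect the real work to lie in analysing the finite orbit of $2n\alpha$ modulo $2$ under the rationality of $\alpha$ that (100) forces, and in pinning the admissible arguments $\phi_n$ down closely enough to extract $c_3$; by comparison the separation from $\pm\pi$ required for (101) and for the simplicity is immediate.
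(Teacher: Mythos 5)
Your handling of (101) and of the simplicity assertion coincides with the paper's own proof, with more care on the details: the paper likewise factors out the positive constant $\left((2\pi)^{2n-1}(2n-1)!\right)^{-4}$ so that $\arg(\alpha_n\beta_n)=\arg((ab)^{2n})\equiv 2n\pi\alpha \pmod{2\pi}$, deduces from (100) a uniform bound $-\pi+c_2<\arg((ab)^{2n})<\pi-c_2$, observes that (101) implies (95) (so that $n\in\mathbb{S}$ and $t_n=0$), and concludes by Theorem 8. Your remark that $3cn^{-2}<c_2$ only for $n$ large, so that $N$ must be enlarged to absorb finitely many indices, is precisely the justification the paper suppresses when it says ``(101) implies that (95) holds.''

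On (102), where you stalled, you have in fact put your finger on a genuine defect of the paper rather than of your own argument: the paper's proof of (102) is the single word ``hence,'' and the implication $(101)\Rightarrow(102)$ is false. Worse, (102) simply does not follow from (100). Take $ab$ real and positive, so $\alpha=0$: then $\inf_{q,p}\mid 2q\alpha-(2p-1)\mid=1\neq0$, yet $\alpha_n\beta_n$ is a positive real number for every $n$, so $\operatorname{Im}(\alpha_n\beta_n)=0$ and (102) fails for every $c_3>0$. Your fallback plan (analysing the finite orbit of $2n\alpha$ modulo $2$) would expose this rather than repair it: under the intended reading of (100) as a distance to the set of all odd integers, equidistribution of $2q\alpha \bmod 2$ shows that (100) indeed forces $\alpha=m/q_0$ rational, as you suspected; but then for every $n\in q_0\mathbb{N}$ the number $2n\alpha=2nm/q_0$ is an even integer, $\phi_n=0$, and $\alpha_n\beta_n$ is real positive, so (102) fails along an infinite subsequence for \emph{every} $\alpha$ satisfying (100) --- no choice of $c_3$ exists. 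What (101) does give, and what the paper's later uses of (102) (the non-cancellation of $D(\lambda_n(t))=(4\pi nt)^2[1]+\alpha_n\beta_n[1]$ in Lemma 2(b)) actually require, is the restricted inequality $\left\vert\operatorname{Im}(\alpha_n\beta_n)\right\vert\geq\sin(c_2)\left\vert\alpha_n\beta_n\right\vert$ for those $n$ with $\operatorname{Re}(\alpha_n\beta_n)<0$, since $\cos\phi_n<0$ together with $\vert\phi_n\vert<\pi-c_2$ forces $\vert\sin\phi_n\vert>\sin c_2$. So your proposal proves everything in the corollary that is true --- (101), membership in $\mathbb{S}$, simplicity and (48) on $[0,\rho]$ --- and the step you flagged as the main obstacle is unprovable as stated; (102) should either be weakened to the restricted form above or the hypothesis strengthened to keep $2n\alpha$ away from the even integers as well.
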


\begin{proof}
By (100), there exists $c_{2}\in(0,1)$ such that $-\pi+c_{2}<\arg
((ab)^{2n})<\pi-c_{2}$ for all $n\in\mathbb{N}.$ Hence by the definition of
$\beta_{n}$ and $\alpha_{n}$ (see Lemma 1) (101) and hence (102) holds.
Moreover, (101) implies that (95) holds. Therefore the proof follows from
Theorem 8
\end{proof}

\begin{remark}
Let $\widetilde{A},$ $\widetilde{B},$ $\widetilde{A}^{\prime}$, $\widetilde
{B}^{\prime}$ and $\widetilde{C}$ be the functions obtained from $A,$ $B,$
$A^{\prime},$ $B^{\prime}$ and $C$ by replacing $a_{k},a_{k}^{\prime}%
,b_{k},b_{k}^{\prime}$ $\ $with $\widetilde{a}_{k},\widetilde{a}_{k}^{\prime
},\widetilde{b}_{k},\widetilde{b}_{k}^{\prime}$ , where $\widetilde{a}%
_{k},\widetilde{a}_{k}^{\prime},\widetilde{b}_{k},\widetilde{b}_{k}^{\prime}$
differ from $a_{k},a_{k}^{\prime},b_{k},b_{k}^{\prime}$ respectively, in the
following sense. The sums in the expressions for $\widetilde{a}_{k}%
,\widetilde{a}_{k}^{\prime},\widetilde{b}_{k},\widetilde{b}_{k}^{\prime}$ are
taken under condition $n_{1}+n_{2}+...+n_{s}\neq0,\pm(2n+1)$ instead of the
condition $n_{1}+n_{2}+...+n_{s}\neq0,\pm2n$ for $s=1,2,...,k.$ In
$\widetilde{b}_{k},\widetilde{b}_{k}^{\prime}$ the multiplicand $q_{\pm
2n-n_{1}-n_{2}-...-n_{k}}$ of $b_{k},b_{k}^{\prime}$ is replaced by
$q_{\pm(2n+1)-n_{1}-n_{2}-...-n_{k}}$. \ To consider the case $t\in\lbrack
\pi-\rho,\pi]$ instead of (22), (26) we use%
\begin{align*}
(\lambda_{n,j}(t)-(2\pi n+t)^{2}-\widetilde{A}(\lambda_{n,j}(t))u_{n,j}(t)  &
=(q_{2n+1}+\widetilde{B}(\lambda_{n,j}(t))v_{n,j}(t),\\
(\lambda_{n,j}(t)-(-2\pi(n+1)+t)^{2}-\widetilde{A}^{\prime}(\lambda
_{n,j}(t)))v_{n,j}(t)  &  =(q_{-2n-1}+\widetilde{B}^{\prime}(\lambda
_{n,j}(t))u_{n,j}(t)
\end{align*}
and repeat the investigations of the case $t\in\lbrack0,\rho]$. Note that
instead of (20) for $k\neq0,2n$ using the same inequality for $k\neq0,2n+1$
and $t\in\lbrack\pi-\rho,\pi]$ from (21) we obtain the last equalities instead
of (22) and (26). \ In the case $t\in\lbrack\pi-\rho,\pi]$ \ instead of (48)
we obtain
\begin{equation}
\lambda_{n,j}(t)=(2\pi n+t)^{2}-2\pi(2n+1)(t-\pi)+\frac{1}{2}(\widetilde
{A}^{^{\prime}}+\widetilde{A})+(-1)^{j}\sqrt{\widetilde{D}(\lambda_{n,j}(t))},
\end{equation}
where $\widetilde{D}=\left(  2\pi(2n+1)(t-\pi)+\widetilde{C}\right)
^{2}+\widetilde{B}$ $\widetilde{B}^{\prime}.$ Similarly, instead of (66),
(81), (96) and (100) we obtain respectively the following relations
\[
\widetilde{B}(\lambda,t)=\widetilde{\beta}_{n}\left(  1+O(n^{-2})\right)
,\text{ }\widetilde{B}^{\prime}(\lambda,t)=\widetilde{\alpha}_{n}\left(
1+O(n^{-2})\right)  ,
\]%
\[
\left(  2\pi(2n+1)(t-\pi)\right)  ^{2}=-\widetilde{\beta}_{n}\widetilde
{\alpha}_{n}\left(  1+O(n^{-2})\right)  ,
\]%
\[
(2\pi(2n+1)(\widetilde{t}_{n}-\pi))^{2}(1-cn^{-2})=-(1+cn^{-2}+n^{-3}%
)\operatorname{Re}(\widetilde{\beta}_{n}\widetilde{\alpha}_{n}),
\]%
\begin{equation}
\text{ }\inf_{q,p\in\mathbb{N}}\{\mid(2q+1)\alpha-(2p-1)\mid\}\neq0,
\end{equation}
where $\widetilde{\beta}_{n}=b^{2n+1}\left(  (2\pi)^{2n}(2n)!\right)  ^{-2}$,
$\widetilde{\alpha}_{n}=a^{2n+1}\left(  (2\pi)^{2n}(2n)!\right)  ^{-2},$
$\widetilde{t}_{n}\in\lbrack\pi-\rho,\pi]$ and Theorems 7 and 8 and Corollary
2 continue to hold under the corresponding replacement.

As we noted in Section 2 (see \textit{Theorem 2 of [12] and Remark 1}) the
large eigenvalues of $H_{t}$ for $t\in\lbrack\rho,\pi-\rho]$ consist of the
simple eigenvalues $\lambda_{n}(t)$ for $\left\vert n\right\vert >N$
satisfying the, \textit{uniform with respect to }$t$\textit{ in }$[\rho
,\pi-\rho],$\textit{ asymptotic formula (17).} Thus by Theorem 8 and by the
just noted similar investigation, the eigenvalues $\lambda_{n,j}(t)$ for
$n>N,$ $j=1,2$ and $t\in$ $\left(  [t_{n},\rho]\cup\lbrack\pi-\rho
,\widetilde{t}_{n}]\right)  $ and the eigenvalues $\lambda_{n}(t)$ \textit{for
\ }$t\in\lbrack\rho,\pi-\rho]$\textit{ and }$\left\vert n\right\vert
>N$\textit{ are simple.}. These eigenvalues satisfy (48), (17) and (103) for
$t\in$ $[t_{n},\rho],$ $t\in\lbrack\rho,\pi-\rho]$ and $t\in\lbrack\pi
-\rho,\widetilde{t}_{n}]$ respectively. Finally, note that (100) and (104)
hold if and only if (8) holds.
\end{remark}

\section{On the Spectrality of H}

In this section we find necessary and sufficient condition on $a$ and $b$ for
the asymptotic spectrality of the operator $H(a,b)$, that is, we prove Theorem
1 formulated in the introduction. To prove this main result of this section we
first prove the following two statements which easily follows from the results
of Section 3.

\begin{theorem}
If (8) holds, then there exists $N$ such that for $\left\vert n\right\vert >N$
the component $\Gamma_{n}$ of the spectrum $S(H)$ of the operator $H$ is a
separated simple analytic arc with the end points $\lambda_{n}(0)$ and
$\lambda_{n}(\pi).$ These components do not contain spectral singularities. In
other words, the number of the spectral singularities of $H$ is finite.
\end{theorem}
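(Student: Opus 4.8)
The plan is to show that condition (8) forces, via Corollary 2 and Remark 3, that all the large eigenvalues are simple with a controlled spectral structure, and then to translate this into the geometric and analytic statements about the arcs $\Gamma_n$. First I would invoke the final sentence of Remark 3, which tells us that (8) is equivalent to the simultaneous validity of (100) and (104). Under (100), Corollary 2 gives that for $t\in[0,\rho]$ the eigenvalues $\lambda_{n,1}(t),\lambda_{n,2}(t)$ are simple and satisfy the explicit formula (48); under (104), the analogous statement from Remark 3 gives simplicity on $[\pi-\rho,\pi]$ via (103). Combined with Theorem 2 of [12] for the middle range $t\in[\rho,\pi-\rho]$, this yields that for $|n|>N$ every eigenvalue $\lambda_n(t)$ is simple for all $t\in(-\pi,\pi]$.

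Next I would assemble the arc structure. Since each $\lambda_n(t)$ is a simple eigenvalue throughout $(-\pi,\pi]$, the implicit function theorem applied to $F(\lambda)=2\cos t$ (equivalently to the root equations (48), (17), (103)) shows $\lambda_n(\cdot)$ is analytic in $t$, so $\Gamma_n=\{\lambda_n(t):t\in(-\pi,\pi]\}$ is a simple analytic arc. The endpoints are at $t=0$ and $t=\pi$ by the symmetry $\lambda_n(-t)=\lambda_n(t)$ recorded in Remark 1; the asymptotic formulas (48), (55), (65) place $\Gamma_n$ in a shrinking neighborhood of the point $(2\pi n)^2$, of diameter $O(n^{-1})$, which for $|n|>N$ separates it from all the other components (whose centers $(2\pi m)^2$ are at distance $\gtrsim |n|$). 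This gives the separatedness and also that $\Gamma_n$ is a genuine arc rather than a closed curve or a figure meeting another component.

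For the absence of spectral singularities on these components, I would return to Definition 3: a spectral singularity occurs when $|d_n(t_k)|\to 0$ for some sequence $t_k$. The key quantitative input is the lower bound on $|D(\lambda_{n,j}(t),t)|$ that underlies the simplicity: under (101)–(102), the argument of $\beta_n\alpha_n$ stays bounded away from $\pm\pi$ and $\mathrm{Im}(\beta_n\alpha_n)$ dominates in the relevant regime, so $D$ stays away from zero uniformly, the two eigenvalues $\lambda_{n,1}(t)\ne\lambda_{n,2}(t)$ stay uniformly apart, and the spectral projection norm $|d_n(t)|^{-1}$ stays uniformly bounded. Thus $|d_n|^{-1}$ admits a bound independent of $t$ for $|n|>N$, so no $\Gamma_n$ with $|n|>N$ contains a spectral singularity.

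The main obstacle, I expect, is the last step: making rigorous the passage from the nonvanishing and lower bound on $D$ to a uniform bound on $|d_n(t)|^{-1}$. The simplicity and the gap between $\lambda_{n,1}$ and $\lambda_{n,2}$ are not by themselves enough; one must control the biorthogonality constant $d_n(t)=(\Psi_{n,t},\Psi_{n,t}^*)$ and show it does not degenerate. This requires reading off the asymptotics of the eigenfunction coefficients $u_{n,j}(t),v_{n,j}(t)$ from (50)–(51) and (36), estimating $d_n(t)$ in terms of $\sqrt{D}$, and verifying that the lower bound $|\sqrt{D}|\gtrsim |4\pi n t| + \sqrt{|\alpha_n\beta_n|}$ coming from (101)–(102) prevents $d_n(t)$ from approaching $0$; the finitely many $|n|\le N$ contribute at most finitely many singularities, giving the final conclusion.
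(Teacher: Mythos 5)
Your first half (simplicity of the large eigenvalues via Remark 3, Corollary 2 and Theorem 2 of [12], then analyticity and the arc structure of $\Gamma_n$) matches the paper's proof. The genuine gap is in your last step. You propose to establish that no $\Gamma_n$ with $\vert n\vert >N$ contains a spectral singularity by proving a bound on $\vert d_n(t)\vert^{-1}$ that is \emph{uniform in $n$ and $t$}. But such a uniform bound is precisely the statement that $H$ has no spectral singularity at infinity, i.e.\ asymptotic spectrality, and by the paper's Theorem 10 this requires $\vert a\vert=\vert b\vert$ \emph{in addition to} (8). Hypothesis (8) alone permits $\vert a\vert\neq\vert b\vert$, and in that case Proposition 1 of the paper shows $d_n(0)\rightarrow 0$ as $n\rightarrow\infty$ even though all these periodic eigenvalues are simple; so the uniform bound you flag as your ``main obstacle'' is not merely hard --- it is false under the stated hypothesis, and no amount of work with $u_{n,j}$, $v_{n,j}$ and $\sqrt{D}$ can produce it. What the theorem actually needs is much weaker and is what the paper uses: a spectral singularity in the sense of Definition 3 is a \emph{finite} point $\lambda$, and by Proposition 2 of [17] every spectral singularity is contained in the set of multiple eigenvalues of $H_t$. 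Since every eigenvalue on $\Gamma_n$ for $\vert n\vert >N$ is simple (equivalently: for each \emph{fixed} $n$, $d_n(\cdot)$ is continuous and nonvanishing on the compact parameter interval, giving an $n$-dependent bound on $\vert d_n(t)\vert^{-1}$, which suffices for Definition 3), no such component can carry a spectral singularity --- no uniformity in $n$ is needed or available.

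A second, smaller gap: you assert that the finitely many bands $\vert n\vert\leq N$ ``contribute at most finitely many singularities'' with no argument; each band is a continuum, so a priori one band could contain infinitely many singularities. The paper closes this by the same containment: spectral singularities lie among the multiple eigenvalues, the multiple eigenvalues are zeros of the entire function $F^{\prime}(\lambda)$, and an entire function has only finitely many zeros on the bounded set containing $\bigcup_{\vert n\vert\leq N}\Gamma_n$. Finally, a factual slip in your geometric step: $\Gamma_n$ is not contained in an $O(n^{-1})$ neighborhood of $(2\pi n)^2$; by (17) it stretches from $\lambda_n(0)\approx(2\pi n)^2$ to $\lambda_n(\pi)\approx(2\pi n+\pi)^2$, so its length is of order $n$. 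Separatedness follows instead from the order-$n$ gaps between consecutive unperturbed bands together with the $O(n^{-1}\ln\vert n\vert)$ deviation estimates, not from any shrinking-diameter claim.
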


\begin{proof}
As we noted in the end of Remark 3 if (8) holds, then (100) and (104) hold
too. Therefore by Corollary 2, Theorem 2 of [13]\textit{,} and Remark 3 the
eigenvalues $\lambda_{n}(t)$ for $\left\vert n\right\vert >N$ and $t\in
\lbrack0,\pi]$ are simple. Therefore for $\left\vert n\right\vert >N$ the
component $\Gamma_{n}$ of the spectrum of the operator $H$ is a separated
simple analytic arc with the end points $\lambda_{n}(0)$ and $\lambda_{n}%
(\pi)$. It is well-known that the spectral singularities of $H$ are contained
in the set of multiple eigenvalues of $H_{t}$ (see Proposition 2 of [17]).
Hence, $\Gamma_{n}$ for $\left\vert n\right\vert >N$ does not contain the
spectral singularities. On the other hand, the multiple eigenvalues are the
zeros of the entire function $\frac{dF(\lambda)}{d\lambda},$ where
$F(\lambda)$ is defined in (3). Since the entire function has a finite number
of roots on the bounded sets the number of the spectral singularities of
$H(a,b)$ is finite
\end{proof}

It was noted in [2] that (see page 539 of [2]) if$\ |a|\neq|b$%
%TCIMACRO{\TEXTsymbol{\vert}}%
%BeginExpansion
$\vert$%
%EndExpansion
, then the results of [6] and [2] show that $H(a,b)$ is not a spectral
operator. Since our aim is to prove the necessary and sufficient condition for
asymptotic spectrality and the fact that $H(a,b)$ is not a spectral operator
does not imply that it is not asymptotic spectral operator, here we prove the
following fact which easily follows from the formulas of Section 3.

\begin{proposition}
If $\mid a\mid\neq\mid b\mid,$ then the operator $H(a,b)$ has the spectral
singularity at infinity and hence is not an asymptotically spectral operator.
\end{proposition}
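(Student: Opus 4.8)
The plan is to show that when $|a| \neq |b|$, the norms of the spectral projections $|d_n(t)|^{-1}$ fail to stay bounded along a suitable sequence of indices, i.e. I must exhibit sequences $\{n_k\}$ and $\{t_k\}$ with $|n_k| \to \infty$ and $|d_{n_k}(t_k)| \to 0$. The natural place to look is near $t=0$ (and symmetrically near $t=\pi$), where the two eigenvalues $\lambda_{n,1}(t)$ and $\lambda_{n,2}(t)$ in the disk $U(n,t,\rho)$ come close together and the eigenfunctions tend to become parallel. The key structural input is the eigenfunction decomposition (35)–(36): to leading order $\Psi_{n,j,t} = u_{n,j}e^{i(2\pi n+t)x} + v_{n,j}e^{i(-2\pi n+t)x} + O(n^{-1})$, so the inner product $d_n(t) = (\Psi_{n,t},\Psi_{n,t}^\ast)$ collapses of spectral projection norm is controlled by how the ratios $u_{n,j}/v_{n,j}$ behave for the two branches $j=1,2$.

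**First I would** quantify the asymmetry forced by $|a|\neq|b|$. From (66), $B(\lambda,t) = \beta_n[1]$ and $B'(\lambda,t)=\alpha_n[1]$ with $\beta_n = b^{2n}((2\pi)^{2n-1}(2n-1)!)^{-2}$ and $\alpha_n = a^{2n}((2\pi)^{2n-1}(2n-1)!)^{-2}$, so the ratio $|\beta_n/\alpha_n| = |b/a|^{2n}$ grows or decays geometrically. Substituting into the coupling relations (50)–(51), for an eigenvalue lying in $d^+(\cdot)$ one finds $v_{n,j}/u_{n,j}$ is governed by $B'/(\lambda - (-2\pi n+t)^2 - A')$, while for the other branch the dominant ratio is governed by $B$. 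Because $|\beta_n|$ and $|\alpha_n|$ differ by the exponentially large factor $|b/a|^{2n}$, one of the two normalized eigenfunctions becomes overwhelmingly concentrated on $e^{i(2\pi n+t)x}$ and the other on $e^{i(-2\pi n+t)x}$ — \emph{unless} $t$ is tuned so that the denominators $(4\pi nt)^2 + \beta_n\alpha_n$ nearly vanish, which is exactly the multiple-eigenvalue condition (81). The plan is to choose $t_n$ solving (an approximate version of) (81), i.e. $(4\pi n t_n)^2 \approx -\beta_n\alpha_n$, so that the two branches collide and $\Psi_{n,1,t_n} \approx \Psi_{n,2,t_n}$ up to phase.

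**The main computation** is then to evaluate $d_n(t_n) = (\Psi_{n,t_n},\Psi_{n,t_n}^\ast)$ at such a collision point. Here $\Psi^\ast_{n,t}$ is the eigenfunction of the adjoint $(H_t)^\ast = H_t(\bar b, \bar a)$, so its Fourier coefficients swap the roles of $a$ and $b$; concretely $\Psi^\ast_{n,t}$ has leading part $u^\ast_{n}e^{i(2\pi n+t)x} + v^\ast_{n}e^{i(-2\pi n+t)x}$ with the analogous ratio controlled by $\bar\alpha_n/\bar\beta_n$ rather than $\beta_n/\alpha_n$. Computing $(\Psi_{n,t_n},\Psi^\ast_{n,t_n}) = u_n\overline{u^\ast_n} + v_n\overline{v^\ast_n} + O(n^{-1})$ at a point where the eigenvalue is (nearly) double, the two surviving terms cancel to leading order precisely because the $a$–$b$ asymmetry makes $u_n\overline{u^\ast_n}$ and $v_n\overline{v^\ast_n}$ equal in magnitude but opposite in the relevant combination. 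This cancellation is what drives $d_n(t_n)\to 0$, yielding a spectral singularity at infinity by Definition~4, and hence non-asymptotic-spectrality by the remark that $L(q)$ has no spectral singularity at infinity iff it is asymptotically spectral.

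**The hard part will be** making the collision argument rigorous when (81) has no exact solution with $t_n \in [0,\rho]$ real — the right-hand side $-\beta_n\alpha_n$ need not be a nonnegative real, so $(4\pi n t_n)^2 = -\beta_n\alpha_n$ may have no real root. I expect to handle this by \emph{not} insisting on an exact double eigenvalue: instead I would pick a real $t_n$ making $|(4\pi n t_n)^2|$ comparable to $|\beta_n\alpha_n|$ (the best real approximation), show via (92) that $D(\lambda_{n,j}(t_n),t_n)$ is then small relative to the scale at which the $u/v$ ratios separate, and track how the exponential imbalance $|b/a|^{2n}$ forces near-degeneracy of the two normalized eigenfunctions even at an approximate collision. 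Controlling the error terms uniformly — in particular verifying that the leading cancellation in $d_n(t_n)$ dominates the $O(n^{-1})$ remainders from (36) — is the delicate estimate, but the exponential separation between $|\alpha_n|$ and $|\beta_n|$ gives ample room, since $|d_n(t_n)|$ will be forced to decay at least polynomially while the competing remainder contributions are themselves $O(n^{-1})$.
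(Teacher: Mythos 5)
Your central device --- tuning $t_n$ so that $(4\pi n t_n)^2 \approx -\beta_n\alpha_n$ and extracting the decay of $d_n$ from a near-collision of the two branches --- cannot work, because the existence of such a real $t_n$ is governed by $\arg(ab)$, not by $|a|\neq|b|$. For real $t$ one has $(4\pi n t)^2\geq 0$, so the multiple-eigenvalue condition (81) can be approximately satisfied only when $-\beta_n\alpha_n$ lies near the positive real axis, i.e.\ when $\arg\bigl((ab)^{2n}\bigr)$ is near $\pi$; that is exactly the failure of condition (8), a condition entirely independent of $|a|\neq|b|$. Take $a=1$, $b=2$: then $\alpha_n\beta_n>0$, so $D=(4\pi nt)^2[1]+\alpha_n\beta_n[1]$ satisfies $|D|\gtrsim\max\{(4\pi nt)^2,\,|\alpha_n\beta_n|\}$ for every real $t$, condition (95) holds with $t_n=0$, and by Theorem 8 all eigenvalues $\lambda_{n,j}(t)$, $t\in[0,\rho]$, are simple with no near-degeneracy anywhere; your fallback (``pick the best real approximation and show $D$ is then small'') is simply false here --- yet the Proposition must hold for this pair. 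So near-collision cannot be the engine of the proof, and your concluding cancellation claim has nothing to stand on in such cases.

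The paper's actual mechanism is much simpler and needs no collision: it evaluates at $t=0$. There $\lambda_n(0)$ is simple (by Theorem 7, since $0\neq-\beta_n\alpha_n[1]$ when $ab\neq0$), and, using $A(\lambda_n(0),0)=A^{\prime}(\lambda_n(0),0)$ (Lemma 3 of [11]) and $B,B^{\prime}\neq0$ from (66), dividing (105) by (106) gives $u_n^2/v_n^2=B/B^{\prime}$, an exponentially lopsided ratio of order $|b/a|^{\pm2n}$; hence $\Psi_{n,0}=c\,e^{\mp i2\pi nx}+O(n^{-1})$ collapses onto a single exponential. Since $(H_t)^{\ast}=H_t(\bar b,\bar a)$ swaps $a$ and $b$, the identical computation forces $\Psi^{\ast}_{n,0}$ onto the \emph{opposite} exponential, and then $d_n(0)=u_n\overline{u_n^{\ast}}+v_n\overline{v_n^{\ast}}+O(n^{-1})=O(n^{-1})\to0$: no cancellation between the two products is needed, since each is individually exponentially small (one factor in each is $O(|a/b|^{n})$). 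Note also that your intermediate picture is wrong in the relevant regime: for $|a|<|b|$ and $4\pi nt=O(\varepsilon_n)$ one has $|E_{\pm}|=O(\varepsilon_n)\ll|\beta_n|$, so by (109) \emph{both} branches satisfy $v_{n,j}/u_{n,j}=O(|a/b|^{n})$ and concentrate on the \emph{same} exponential --- the driver of $d_n\to0$ is the adjoint's $a\leftrightarrow b$ swap, which you mention in passing but never exploit on its own. Finally, your worry about the $O(n^{-1})$ remainders is inverted: since the goal is $d_n\to0$, remainders of size $O(n^{-1})$ are harmless, not delicate.
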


\begin{proof}
Suppose, without loss of generality, that $\mid a\mid<\mid b\mid.$ By Theorem
7 large periodic eigenvalues $\lambda_{n}(0)$ are simple. Due to (49),
formulas (22), (26) and (36) for $t=0$ have the forms
\begin{align}
(\lambda_{n}(0)-(2\pi n)^{2}-A(\lambda_{n}(0),0))u_{n}  &  =B(\lambda
_{n}(0),0))v_{n},\\
(\lambda_{n}(0)-(2\pi n)^{2}-A^{\prime}(\lambda_{n}(0),0))v_{n}  &
=B^{\prime}(\lambda_{n}(0),0))u_{n},\text{ }\\
\left\vert u_{n}\right\vert ^{2}+\left\vert v_{n}\right\vert ^{2}  &
=1+O(n^{-2}),
\end{align}
where $u_{n}=(\Psi_{n,0},e^{i2\pi nx}),$ $v_{n}=(\Psi_{n,0},e^{-i2\pi nx})$
and $\lambda_{n,j}(0)$ is redenoted by $\lambda_{n}(0).$ By (66),
$B(\lambda_{n}(0),0))$ and $B^{\prime}(\lambda_{n}(0),0))$ are nonzero
numbers. Moreover, by Lemma 3 of [11] we have $A(\lambda_{n}(0),0))=A^{\prime
}(\lambda_{n}(0),0)).$ Therefore equalities (105)-(107) imply that
\[
(\lambda_{n}(0)-(2\pi n)^{2}-A(\lambda_{n}(0),0))=(\lambda_{n}(0)-(2\pi
n)^{2}-A(\lambda_{n}(0),0))\neq0,
\]
$u_{n}\neq0$ and $v_{n}\neq0.$ Thus, dividing (105) and (106) side by side and
using (66) we get
\begin{equation}
\frac{u_{n}^{2}(0)}{v_{n}^{2}(0)}=\frac{B(\lambda_{n}(0),0))}{B^{\prime
}(\lambda_{n}(0),0))}=O\left(  \frac{\left\vert a\right\vert ^{n}}{\left\vert
b\right\vert ^{n}}\right)  =O(n^{-2}).
\end{equation}
Using this equality and (35) we obtain
\[
u_{n}=v_{n}O(n^{-1})=O(n^{-1}),\text{ }\Psi_{n,0}(x)=c_{4}e^{-i2\pi
nx}+O(n^{-1}),
\]
where $\mid c_{4}\mid=1$ and $\Psi_{n,0}(x)$ is the normalized eigenfunction
corresponding to $\lambda_{n}(0).$ Replacing $a$ and $b$ by $\overline{b}$ and
$\overline{a}$ respectively, in the same way we obtain
\[
\Psi_{n,0}^{\ast}(x)=c_{5}e^{i2\pi nx}+O(n^{-1}),\text{ }\left\vert
c_{5}\right\vert =1.
\]
Thus $(\Psi_{n,0},\Psi_{n,0}^{\ast}(x))\rightarrow0$ as $n\rightarrow\infty$
and hence the proof follows from Definitions 2 and 3
\end{proof}

Thus the last theorem shows that if $\mid a\mid\neq\mid b\mid,$ then $H(a,b)$
is not an asymptotically spectral operator and for this in is enough to
consider the case $t=0.$ However the inverse statement is not true. Moreover,
by (6) and Definition 2, to find the condition for asymptotic spectrality we
need to consider $d_{n}(t)=(\Psi_{n,t},\Psi_{n,t}^{\ast}(x))$ and get an
estimation (7) for large $n$ and for all values of $t\in(-\pi,\pi],$ when
$\lambda_{n}(t)$ is a simple eigenvalue. The proof of (7) for $t\in\lbrack
\rho,\pi-\rho]$ follows from (17). Now we estimate $\left\vert d_{n}%
(t)\right\vert ^{-1}$ for $t\in\lbrack0,\rho],$ which is the main difficulty
of this section. Especially, it is very hart to estimate it when $(4\pi
nt)^{2}$ lies in the neighborhood of $-\beta_{n}\alpha_{n},$ since in this
case $\lambda_{n}(t)$ may became a multiple eigenvalue due to Theorem 7. The
estimation for $t\in\lbrack\pi-\rho,\pi]$ is similar.

\begin{remark}
Henceforward, for brevity of notation and according to Remark 1 instead of
$\lambda_{n,2}(t),$ $u_{n,2}(t)$ and $v_{n,2}(t)$ we use the notation
$\lambda_{n}(t),$ $u_{n}(t)$ and $v_{n}(t)$ and consider the case $n>N.$ The
case $n<-N$ is similar. Moreover, we redenote the numbers $C(\lambda
_{n,2}(t),t),$ $D(\lambda_{n,2}(t),t),$ $B(\lambda_{n,2}(t),t)$, and
$B^{\prime}(\lambda_{n,2}(t),t)$ by $C(\lambda_{n}(t)),$ $D(\lambda_{n}(t)),$
$B(\lambda_{n}(t))$, $B^{\prime}(\lambda_{n}(t)).$ By (84), Lemma 1 and (53)
the equalities%
\begin{align*}
4\pi nt+C(\lambda_{n}(t))  &  =(4\pi nt)\left[  1\right]  ,\text{ }%
B(\lambda_{n}(t))=\beta_{n}\left[  1\right]  ,\text{ }B^{\prime}(\lambda
_{n}(t))=\alpha_{n}\left[  1\right]  ,\\
D(\lambda_{n}(t))  &  =(4\pi nt)^{2}[1]+\alpha_{n}\beta_{n}[1]
\end{align*}
hold uniformly for $t\in\lbrack0,n^{-3}],$ where $[1]=1+O(n^{-2}).$ By Theorem
8, $\lambda_{n}(t)$ satisfies (48) for $j=2$ \ and $t\in\lbrack t_{n},\rho].$
If $t\in\lbrack0,t_{n}),$ then $\lambda_{n}(t)$ satisfies either (44) or (45).
\end{remark}

Using formula (48) for $j=2$ in (50) and (51) and taking into account the
notations and arguments of Remark 4 we obtain
\begin{equation}
E_{-}(\lambda_{n}(t))u_{n}(t)=\beta_{n}v_{n}(t)[1],
\end{equation}%
\begin{equation}
E_{+}(\lambda_{n}(t))v_{n}(t)=\alpha_{n}u_{n}(t)[1],
\end{equation}
where $[1]=1+O(n^{-2})$,%
\begin{equation}
E_{\pm}(\lambda_{n}(t))=s(t)\sqrt{(4\pi nt)^{2}[1]+\alpha_{n}\beta_{n}[1]}%
\pm4\pi nt[1],
\end{equation}
$4\pi nt[1]=C(\lambda_{n}(t))+4\pi nt,$ $s(t)$ is $-1$ or $1$ if $\lambda
_{n}(t)$ satisfies (44) or (45) respectively.

Since the boundary condition (2) is self-adjoint we have $(H_{t}(q))^{\ast}=$
$H_{t}(\overline{q}).$ Therefore, all formulas and theorems obtained for
$H_{t}$ are true for $H_{t}^{\ast}$ if we replace $a$ and $b$ by $\overline
{b}$ and $\overline{a}$ respectively. For instance, (35) and (36) hold for the
operator $H_{t}^{\ast}$ and hence we have
\begin{equation}
\Psi_{n,t}^{\ast}(x)=u_{n}^{\ast}(t)e^{i(2\pi n+t)x}+v_{n}^{\ast}(t)e^{i(-2\pi
n+t)x}+h_{n,t}^{\ast}(x),
\end{equation}%
\begin{equation}
(h_{n,t}^{\ast},e^{i(\pm2\pi n+t)x})=0,\text{ }\left\Vert h_{n,t}^{\ast
}\right\Vert =O(n^{-1}),\text{ }\left\vert u_{n}^{\ast}(t)\right\vert
^{2}+\left\vert v_{n}^{\ast}(t)\right\vert ^{2}=1+O(n^{-2}).
\end{equation}
Similarly the formulas (109) and (110) for the operator $H_{t}^{\ast}$ have
the form%
\begin{equation}
\overline{E_{-}(\lambda_{n}(t))}u_{n}^{\ast}(t)=\overline{\alpha_{n}}%
v_{n}^{\ast}(t)[1],\text{ }\overline{E_{+}(\lambda_{n}(t))}v_{n}^{\ast
}(t)=\overline{\beta_{n}}u_{n}^{\ast}(t)[1].
\end{equation}
For $\mid n\mid>N$ it follows from (35), (36), (112) and (113) that%
\begin{equation}
(\Psi_{n,t},\Psi_{n,t}^{\ast})=u_{n}(t)\overline{u_{n}^{\ast}(t)}%
+v_{n}(t)\overline{v_{n}^{\ast}(t)}+O(n^{-1}).
\end{equation}
By (6) and Definition 2 to study the asymptotic spectrality we need to
consider the expression $(\Psi_{n,t},\Psi_{n,t}^{\ast})$. First let us note
the following simple statement.

\begin{proposition}
The equality $(\Psi_{n,t},\Psi_{n,t}^{\ast})=1+O(n^{-1})$ holds uniformly for
$t\in\lbrack n^{-3},\rho].$
\end{proposition}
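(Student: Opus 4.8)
The plan is to start from the already-derived identity (115), namely $(\Psi_{n,t},\Psi_{n,t}^{\ast})=u_{n}(t)\overline{u_{n}^{\ast}(t)}+v_{n}(t)\overline{v_{n}^{\ast}(t)}+O(n^{-1})$, and to show that on the range $t\in[n^{-3},\rho]$ exactly one of the two products on the right is $1+O(n^{-2})$ while the other is negligible. The mechanism is that for such $t$ the linear term $4\pi nt$ dominates the factorially small coupling $\alpha_{n}\beta_{n}$, so the eigenfunction degenerates to a single dominant Fourier mode that is the same for $\Psi_{n,t}$ and $\Psi_{n,t}^{\ast}$.

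First I would record the sizes of the ingredients. Since $\beta_{n}=b^{2n}((2\pi)^{2n-1}(2n-1)!)^{-2}$ and $\alpha_{n}=a^{2n}((2\pi)^{2n-1}(2n-1)!)^{-2}$ by Lemma 1, the product $\alpha_{n}\beta_{n}$ decays faster than any power of $n^{-1}$; on the other hand $t\geq n^{-3}$ gives $4\pi nt\geq 4\pi n^{-2}$, so $|\alpha_{n}\beta_{n}|/(4\pi nt)^{2}\leq|\alpha_{n}\beta_{n}|n^{4}/(16\pi^{2})\to 0$ uniformly in $t$. Using the expression for $D(\lambda_{n}(t))$ from Remark 4 together with (53), this yields $\sqrt{D(\lambda_{n}(t))}=4\pi nt[1]$ for the principal branch (the argument of $D$ being $O(n^{-2})$); and from $E_{+}E_{-}=D-(4\pi nt+C)^{2}=BB^{\prime}=\alpha_{n}\beta_{n}[1]$, which follows from (53) and (111), exactly one of $E_{\pm}(\lambda_{n}(t))$ has modulus of order $8\pi nt$ and the other has modulus $|\alpha_{n}\beta_{n}|/(8\pi nt)$.

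Next I would feed this into the eigenfunction relations. Take the case $s(t)=1$, so that $E_{+}$ is the large factor; the case $s(t)=-1$ is identical after interchanging the roles of $u$ and $v$. From (110) we get $v_{n}=\alpha_{n}[1]u_{n}/E_{+}$, whence $|v_{n}|\leq|\alpha_{n}||u_{n}|/|E_{+}|$ is factorially small, and combined with $|u_{n}|^{2}+|v_{n}|^{2}=1+O(n^{-2})$ from (36) this forces $|u_{n}|=1+O(n^{-2})$ with $v_{n}$ negligible. The dual relations (114) carry the same large factor $\overline{E_{+}}$, so in the same way $|u_{n}^{\ast}|=1+O(n^{-2})$ and $v_{n}^{\ast}$ is negligible; in particular the dominant Fourier coefficients of $\Psi_{n,t}$ and of $\Psi_{n,t}^{\ast}$ both belong to $e^{i(2\pi n+t)x}$. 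Substituting into (115), the cross term $v_{n}\overline{v_{n}^{\ast}}$ is negligible and $|u_{n}\overline{u_{n}^{\ast}}|=1+O(n^{-2})$.

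Finally, since each normalized eigenfunction is determined only up to a unimodular constant, I would fix the phases so that the dominant coefficients $u_{n}$ and $u_{n}^{\ast}$ (respectively $v_{n},v_{n}^{\ast}$ when $s(t)=-1$) are positive; then $u_{n}\overline{u_{n}^{\ast}}=|u_{n}||u_{n}^{\ast}|=1+O(n^{-2})$, and (115) yields $(\Psi_{n,t},\Psi_{n,t}^{\ast})=1+O(n^{-1})$ uniformly in $t$. The main obstacle is the bookkeeping at the lower endpoint $t=n^{-3}$, where $4\pi nt$ is smallest: there I must check that the gap between the linear term and $\sqrt{\alpha_{n}\beta_{n}}$ remains wide (which it does, by the factorial decay of $\alpha_{n}\beta_{n}$) and, crucially, that every $O$-constant produced along the way is independent of $t$, so that the final estimate is genuinely uniform over the whole interval $[n^{-3},\rho]$.
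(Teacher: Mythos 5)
Your proposal is correct and takes essentially the same route as the paper's own proof: for $t\in[n^{-3},\rho]$ the coefficient $E_{+}(\lambda_{n}(t))$ in (110) (and $\overline{E_{+}}$ in (114)) dominates the factorially small coupling coefficient, forcing $v_{n},v_{n}^{\ast}$ to be negligible and $\left\vert u_{n}\right\vert ,\left\vert u_{n}^{\ast}\right\vert =1+O(n^{-2})$, after which (115) (equivalently, $\Psi_{n,t},\Psi_{n,t}^{\ast}=e^{i(2\pi n+t)x}+O(n^{-1})$ as in the paper) yields the claim. The only small citation repair needed is that the formulas $B=\beta_{n}[1]$, $B^{\prime}=\alpha_{n}[1]$ and $D=(4\pi nt)^{2}[1]+\alpha_{n}\beta_{n}[1]$ from Lemma 1 and Remark 4 are stated only for $t\in[0,n^{-3}]$, so on $[n^{-3},\rho]$ you should instead combine (53), (84) and the uniform bounds (64), which give $D=(4\pi nt)^{2}(1+O(n^{-2}))+O(n^{-10})$ and the same coefficient dominance, exactly as the paper does.
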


\begin{proof}
If $t\in\lbrack n^{-3},\rho],$ then by (84), (53) and (64) the coefficient of
$v_{n}(t)$ in (110) is greater than $n$ times of the coefficient of $u_{n}%
(t)$. Therefore from (35) and (36) we get
\[
\Psi_{n,t}(x)=e^{i(2\pi n+t)x}+O(n^{-1}).
\]
Instead of (110), (35) and (36) using (114), (112) and (113) in the same way
we obtain that $\Psi_{n,t}^{\ast}$ satisfies the same formula. These formulas
imply the proof of the proposition.
\end{proof}

By (115) to estimate $(\Psi_{n,t},\Psi_{n,t}^{\ast})$ we need to consider
$u_{n}(t)\overline{u_{n}^{\ast}(t)}+v_{n}(t)\overline{v_{n}^{\ast}(t)}.$ Using
(110), (114) and the obvious equality
\begin{equation}
E_{+}(\lambda_{n}(t))E_{-}(\lambda_{n}(t))=\alpha_{n}\beta_{n}[1]
\end{equation}
we obtain $v_{n}(t)\overline{v_{n}^{\ast}}(t)\left(  E_{+}(\lambda
_{n}(t))\right)  ^{2}=u_{n}(t)\overline{u_{n}^{\ast}(t)}\alpha_{n}\beta
_{n}[1]$ and%
\begin{equation}
\frac{v_{n}(t)\overline{v_{n}^{\ast}(t)}}{u_{n}(t)\overline{u_{n}^{\ast}(t)}%
}=\frac{\alpha_{n}\beta_{n}[1]}{\left(  E_{+}(\lambda_{n}(t))\right)  ^{2}%
}=\frac{E_{-}(\lambda_{n}(t))[1]}{E_{+}(\lambda_{n}(t))}.
\end{equation}
This with definition of $E_{\pm}(\lambda_{n}(t))$ implies that
\begin{equation}
u_{n}(t)\overline{u_{n}^{\ast}}(t)+v_{n}(t)\overline{v_{n}^{\ast}}%
(t)=u_{n}(t)\overline{u_{n}^{\ast}}(t)F_{+}(\lambda_{n}(t)),
\end{equation}
where
\[
F_{+}(\lambda_{n}(t))=\frac{G(\lambda_{n}(t))}{E_{+}(\lambda_{n}(t))},\text{
}G(\lambda_{n}(t))=O(n^{-2})E_{-}(\lambda_{n}(t))+2s(t)\sqrt{(4\pi
nt)^{2}[1]+\alpha_{n}\beta_{n}[1]}.
\]

Therefore in the following lemma we investigate $F_{+}(\lambda_{n}(t))$ and
$u_{n}(t)\overline{u_{n}^{\ast}}(t)$.

\begin{remark}
By Proposition 2 we need to estimate $(\Psi_{n,t},\Psi_{n,t}^{\ast})$ for
$t\in\lbrack0,n^{-3}].$ For this we divide the last interval into three
subintervals $I_{1},I_{2}$ \ and $I_{3}$ , where $I_{k}$ for $k=1,2,3$ are
respectively the set of all $t\in\lbrack0,n^{-3}]$ such that $4\pi nt$ belongs
to the sets $\left[  0,\frac{1}{4}\varepsilon_{n}\right]  ,$ $\left(  \frac
{1}{4}\varepsilon_{n},\frac{5}{4}\varepsilon_{n}\right)  $ and $\left[
\frac{5}{4}\varepsilon_{n},4\pi n^{-2}\right]  ,$ where $\varepsilon_{n}%
=\sqrt{\left\vert \alpha_{n}\beta_{n}\right\vert }.$ It follows from (96) that
$I_{3}\subset\lbrack t_{n},n^{-3}].$ Therefore $s(t)=1$ if $t\in I_{3}.$
\end{remark}

\begin{lemma}
$(a)$ The relation
\begin{equation}
F_{+}(\lambda_{n}(t))\sim1
\end{equation}
holds uniformly for $t\in\left(  I_{1}\cup I_{3}\right)  $. If $\mid
a\mid=\mid b\mid,$ then there exists $c_{6}>0$ such that
\begin{equation}
\left\vert u_{n}(t)\overline{u_{n}^{\ast}}(t)\right\vert >c_{6}%
\end{equation}
for all $t\in\left(  I_{1}\cup I_{3}\right)  .$

$(b)$ If (8) holds then the relation (119) hold uniformly for $t\in I_{2}.$ If
$\mid a\mid=\mid b\mid,$ then (120) holds for all $t\in I_{2}.$
\end{lemma}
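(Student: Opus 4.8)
The plan is to express every quantity through the two scalars $|E_+(\lambda_n(t))|$ and $w:=\sqrt{\tau^2[1]+\alpha_n\beta_n[1]}$, where $\tau:=4\pi nt$, and to compare their sizes with $\varepsilon_n=\sqrt{|\alpha_n\beta_n|}$ on each of $I_1,I_2,I_3$. Two identities drive everything. The first is the product relation (116), which gives $|E_+|\,|E_-|=|\alpha_n\beta_n|=\varepsilon_n^2$. The second is the sum-of-moduli identity $|E_+|^2+|E_-|^2=2|w|^2+2\tau^2[1]$, obtained by expanding $|s(t)w\pm\tau[1]|^2$ from (111) (the cross terms, opposite in sign, cancel in the sum). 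Note also that on $I_1\cup I_2$ one has $\tau\le\tfrac54\varepsilon_n$ and always $|w|^2=|D|\le\tau^2+|\alpha_n\beta_n|+O(n^{-2}\varepsilon_n^2)\lesssim\varepsilon_n^2$.

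First I would pin down $|E_+|$. On $I_1\cup I_2$ the two identities combine: since $|E_+|^2+|E_-|^2\lesssim\varepsilon_n^2$ and $|E_+||E_-|=\varepsilon_n^2$, writing $p=|E_+|,q=|E_-|$ gives $\min(p,q)=\varepsilon_n^2/\max(p,q)\ge\varepsilon_n^2/\sqrt{p^2+q^2}\gtrsim\varepsilon_n$, so both $|E_+|,|E_-|\sim\varepsilon_n$. On $I_1$, $\tau\le\tfrac14\varepsilon_n$ makes $\tau^2\le\tfrac1{16}|\alpha_n\beta_n|$ negligible in $D$, whence $|w|\sim\varepsilon_n$ as well. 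On $I_3$, where $s(t)=1$ by Remark 5 and $\tau\ge\tfrac54\varepsilon_n$, the positive real term $\tau^2$ dominates $D$; hence $\arg D$ is small, $w\approx\tau$, and $E_+=w+\tau[1]\sim\tau$, so $|E_+|\sim\tau\gtrsim\varepsilon_n$.

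With $|E_+|$ controlled, the relation (119) on $I_1\cup I_3$ follows from $F_+=G/E_+$ with $G=O(n^{-2})E_-+2s(t)w$: the term $O(n^{-2})E_-$ is $O(n^{-2}\varepsilon_n)$ and negligible, so $|G|\sim|w|$, and on $I_1$ both $|G|,|E_+|\sim\varepsilon_n$ while on $I_3$ both $|G|,|E_+|\sim\tau$, giving $F_+\sim1$. For (120) I would read off from (109)--(110) and (115) the ratios $|v_n/u_n|=(|\alpha_n|/|E_+|)[1]$ and $|v_n^*/u_n^*|=(|\beta_n|/|E_+|)[1]$, and combine them with the normalizations (36) and (114) to obtain
\begin{equation*}
|u_n(t)\overline{u_n^*(t)}|^2=\frac{[1]}{\left(1+\dfrac{|\alpha_n|^2}{|E_+|^2}[1]\right)\left(1+\dfrac{|\beta_n|^2}{|E_+|^2}[1]\right)}.
\end{equation*}
When $|a|=|b|$ one has $|\alpha_n|=|\beta_n|=\varepsilon_n$, and since $|E_+|\gtrsim\varepsilon_n$ on every subinterval the two factors $\varepsilon_n^2/|E_+|^2$ are bounded; hence the denominator is bounded and (120) holds with a uniform $c_6$. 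This single computation covers the $I_1\cup I_3$ half of $(a)$ and the $|a|=|b|$ half of $(b)$ at once, the only input being $|E_+|\gtrsim\varepsilon_n$.

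The one genuinely delicate step is the first assertion of $(b)$: $F_+\sim1$ on $I_2$ under hypothesis (8). Here $\tau\sim\varepsilon_n$, so $\tau^2$ and $\alpha_n\beta_n$ are of the same size, and $D=\tau^2[1]+\alpha_n\beta_n[1]$ might a priori nearly cancel, making $|w|$ and hence $|G|$ small and destroying the lower bound for $|F_+|$. This is exactly where (8) enters: by the last line of Remark 3, (8) is equivalent to (100) together with (104), so Corollary 2 (and its $\widetilde{\phantom{a}}$-analogue) yields $|\operatorname{Im}(\alpha_n\beta_n)|>c_3|\operatorname{Re}(\alpha_n\beta_n)|$, which with $|\alpha_n\beta_n|=\varepsilon_n^2$ forces $|\operatorname{Im}(\alpha_n\beta_n)|\ge c\,\varepsilon_n^2$. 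Since $\operatorname{Im}(\tau^2[1])=O(n^{-2}\varepsilon_n^2)$, this gives $|\operatorname{Im}D|\gtrsim\varepsilon_n^2$ and therefore $|w|=\sqrt{|D|}\gtrsim\varepsilon_n$, excluding the cancellation. Then $|G|\sim\varepsilon_n$ (from above and below) and $|E_+|\sim\varepsilon_n$ from the first step give $F_+=G/E_+\sim1$ uniformly on $I_2$. The hardest part is precisely this conversion of the arithmetic condition (8) into the geometric fact that $D$ stays away from the origin throughout $I_2$; the remaining estimates are bookkeeping with $E_+E_-=\alpha_n\beta_n[1]$ and $|E_+|^2+|E_-|^2=2|w|^2+2\tau^2[1]$.
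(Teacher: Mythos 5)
Your proposal is correct, and its skeleton is the paper's: the same reduction of $(\Psi_{n,t},\Psi_{n,t}^{\ast})$ to $F_{+}=G/E_{+}$, the same split into $I_{1},I_{2},I_{3}$, the same use of (110), (114), (36), (113) for (120), and the same conversion of hypothesis (8) into (102) via Remark 3 and Corollary 2 on $I_{2}$. Where you genuinely differ is in how the intermediate asymptotics are certified. The paper verifies $E_{+}\sim\varepsilon_{n}$, $G\sim\varepsilon_{n}$ (its (121), (123)) and $E_{+}\sim\alpha_{n}$ (its (124), offered with ``one can easily verify'') separately on each interval; you instead pair the exact product relation $E_{+}E_{-}=\alpha_{n}\beta_{n}[1]$ from (116) with the parallelogram identity $|E_{+}|^{2}+|E_{-}|^{2}=2|w|^{2}+2\tau^{2}[1]$, which gives $|E_{\pm}|\sim\varepsilon_{n}$ on all of $I_{1}\cup I_{2}$ \emph{unconditionally} — with no appeal to (8) or to $|a|=|b|$, and in particular even near a possible double eigenvalue where $D\approx 0$. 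This is a real clarification: it isolates the only place where (8) is actually needed, namely the lower bound $|\operatorname{Im}D|\geq c\,\varepsilon_{n}^{2}$, hence $|w|\geq c\,\varepsilon_{n}$, which prevents the cancellation $\tau^{2}[1]+\alpha_{n}\beta_{n}[1]\approx 0$ that would kill $|G|$ — exactly the content of the paper's (123), here made explicit. Likewise your closed-form
\begin{equation*}
|u_{n}(t)\overline{u_{n}^{\ast}(t)}|^{2}=\frac{[1]}{\Bigl(1+\frac{|\alpha_{n}|^{2}}{|E_{+}|^{2}}[1]\Bigr)\Bigl(1+\frac{|\beta_{n}|^{2}}{|E_{+}|^{2}}[1]\Bigr)}
\end{equation*}
subsumes the paper's three per-interval arguments for (120) (its $u_{n}\sim v_{n}\sim1$ on $I_{1}$, the $|E_{+}|>|\alpha_{n}|$ bound giving $|u_{n}|>2/3$ on $I_{3}$, and the case (124) on $I_{2}$) in one computation whose only input is $|E_{+}|\geq c\,\varepsilon_{n}$. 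The trade-off: the paper's route is shorter to state on $I_{3}$ (where $s(t)=1$ and $\tau$ dominates make everything immediate from (46)), while yours is more robust and supplies the uniformity in $t$ that the paper asserts but does not spell out. One small point of care you should make explicit: the identity (116) is exact because the factor $\tau[1]=4\pi nt+C(\lambda_{n}(t))$ appearing in $E_{\pm}$ is literally the same quantity squared inside $D$ in (53), so $E_{+}E_{-}=D-(4\pi nt+C)^{2}=BB^{\prime}=\alpha_{n}\beta_{n}[1]$; with a generic reading of the $[1]$ symbols one would pick up a spurious $\tau^{2}O(n^{-2})$ term, harmless on $I_{1}\cup I_{2}$ but worth noting.
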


\begin{proof}
$(a)$\ If $t\in I_{1}$ then we have $4\pi nt\leq\frac{1}{4}\varepsilon_{n}.$
It implies that
\begin{equation}
G(\lambda_{n}(t))\sim\varepsilon_{n},\text{ }E_{+}(\lambda_{n}(t))\sim
\varepsilon_{n}%
\end{equation}
and hence (119) holds. If $t\in I_{3},$ then $s(t)=1$ (see Remark 5) and by
(46) we have
\begin{equation}
G(\lambda_{n}(t))\sim4\pi nt,\text{ }E_{+}(\lambda_{n}(t))\sim4\pi nt.
\end{equation}
Therefore (119) holds. Now suppose that $\mid a\mid=\mid b\mid.$ If $t\in
I_{1}$, then using (121) and taking into account that$\mid\alpha_{n}\mid
=\mid\beta_{n}\mid$ when $\mid a\mid=\mid b\mid,$ we obtain $E_{+}(\lambda
_{n}(t))\sim\alpha_{n}.$ Therefore from (110) and (36) we obtain $u_{n}(t)\sim
v_{n}(t)$ $\sim1.$ In the same way from (114) and (113) we get $u_{n}^{\ast
}(t)\sim v_{n}^{\ast}(t)$ $\sim1.$ These relations imply (120). If $t\in
I_{3}$ and $\mid a\mid=\mid b\mid,$ then using (46) we see that $\left\vert
E_{+}(\lambda_{n}(t))\right\vert >\left\vert \alpha_{n}\right\vert $.
Therefore using (110) and (36) we obtain $\mid u_{n}(t)\mid>2/3.$ Similarly
$\mid u_{n}^{\ast}(t)\mid>2/3.$ Thus (120) holds.

$(b)$ If (8) holds then we have inequality (102). Using it and the relation
$t\in I_{2}$ we obtain%
\begin{equation}
\operatorname{Im}((4\pi nt)^{2}[1]+\alpha_{n}\beta_{n}[1])\sim\alpha_{n}%
\beta_{n}\sim(4\pi nt)^{2},\text{ }E_{\pm}(\lambda_{n}(t))\sim G(\lambda
_{n}(t))\sim\varepsilon_{n}.
\end{equation}
It implies (119). If $\mid a\mid=\mid b\mid$ and $t\in I_{2},$ then
$\mid\alpha_{n}\mid=\mid\beta_{n}\mid$ and $4\pi nt\sim\alpha_{n}.$ Therefore
one can easily verify that
\begin{equation}
E_{+}(\lambda_{n}(t))\sim\alpha_{n}.
\end{equation}
Using it and arguing as in the case $t\in I_{1}$ we get the proof of (120).
\end{proof}

Now we prove the main result (extended version of Theorem 1) of this section.

\begin{theorem}
$(a)$ The operator $H$ has no the spectral singularity at infinity and is an
asymptotically spectral operator if and only if $\mid a\mid=\mid b\mid$ and
(8) holds.

$(b)$ Let $\mid a\mid=\mid b\mid.$ If $\alpha$ is a rational number, that is,
$\alpha=\frac{m}{q}$ where $m$ and $q$ are irreducible integers and $\alpha$
is defined in (8), then the operator $H$ has no the spectral singularity at
infinity and is an asymptotically spectral operator if and only if $m$ is an
even integer. If $\alpha$ is an irrational number, then $H$ has the spectral
singularity at infinity and is not an asymptotically spectral operator if and
only if there exists a sequence of pairs $\{(q_{k},p_{k})\}\subset
\mathbb{N}^{2}$ such that$\ $%
\[
\mid\alpha-\left(  2p_{k}-1\right)  \left(  q_{k}\right)  ^{-1}\mid=o\left(
\left(  q_{k}\right)  ^{-1}\right)  ,
\]
where $2p_{k}-1$ and $q_{k}$ are irreducible integers.
\end{theorem}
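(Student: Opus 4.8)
The plan is to use the remark following Definition 3, which identifies ``no spectral singularity at infinity'' with ``asymptotically spectral,'' and thereby reduce everything to a uniform lower bound for $\left\vert d_{n}(t)\right\vert =\left\vert (\Psi_{n,t},\Psi_{n,t}^{\ast})\right\vert$ on all of $(-\pi,\pi]$ for large $n$. By Proposition 2 one has $(\Psi_{n,t},\Psi_{n,t}^{\ast})=1+O(n^{-1})$ on $[n^{-3},\rho]$; by Theorem 2 of [12] and (17) the same holds on $[\rho,\pi-\rho]$; and $[\pi-\rho,\pi]$ is covered by the antiperiodic analogues assembled in Remark 3. Thus the problem localizes to $[0,n^{-3}]=I_{1}\cup I_{2}\cup I_{3}$, where (115) and (118) give $(\Psi_{n,t},\Psi_{n,t}^{\ast})=u_{n}(t)\overline{u_{n}^{\ast}(t)}\,F_{+}(\lambda_{n}(t))+O(n^{-1})$.

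First I would settle the sufficiency in part $(a)$ by pasting together Lemma 2. Assuming $\mid a\mid=\mid b\mid$ and (8): on $I_{1}\cup I_{3}$ part $(a)$ of that lemma gives $F_{+}\sim 1$ and $\left\vert u_{n}\overline{u_{n}^{\ast}}\right\vert>c_{6}$ (the latter using $\mid a\mid=\mid b\mid$); on $I_{2}$ part $(b)$ gives $F_{+}\sim 1$ (using (8)) together with $\left\vert u_{n}\overline{u_{n}^{\ast}}\right\vert>c_{6}$ (again using $\mid a\mid=\mid b\mid$). Combined with the three trivial intervals, this bounds $\left\vert d_{n}(t)\right\vert$ below uniformly in $t$ for all large $n$, so (7) holds and $H$ is asymptotically spectral.

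Next I would prove the necessity by contraposition. If $\mid a\mid\neq\mid b\mid$, Proposition 1 already produces a spectral singularity at infinity. If $\mid a\mid=\mid b\mid$ but (8) fails, then by the closing sentence of Remark 3 at least one of (100), (104) fails; since $\arg(\alpha_{n}\beta_{n})\equiv 2n\pi\alpha$ and $\arg(\widetilde{\alpha}_{n}\widetilde{\beta}_{n})\equiv(2n+1)\pi\alpha$ by the definitions of these quantities, this forces $\arg(\alpha_{n_{k}}\beta_{n_{k}})\rightarrow\pi$ (modulo $2\pi$) along some $n_{k}\rightarrow\infty$ in the periodic case, or the analogous statement in the antiperiodic case. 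Then, choosing $t_{k}\in I_{2}$ with $4\pi n_{k}t_{k}$ near $\varepsilon_{n_{k}}=\sqrt{\left\vert\alpha_{n_{k}}\beta_{n_{k}}\right\vert}$, the two summands of $D=(4\pi n_{k}t_{k})^{2}[1]+\alpha_{n_{k}}\beta_{n_{k}}[1]$ nearly cancel, so $\left\vert D\right\vert=o(\varepsilon_{n_{k}}^{2})$; substituting $E_{\pm}=\pm 4\pi n_{k}t_{k}[1]+s\sqrt{D}[1]$ and $G=O(n^{-2})E_{-}+2s\sqrt{D}$ into $F_{+}=G/E_{+}$ gives $F_{+}\rightarrow 0$, hence $d_{n_{k}}(t_{k})\rightarrow 0$, which is a spectral singularity at infinity. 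The hard part is exactly this construction: locating $t_{k}$ where $D$ almost vanishes --- the site of multiple eigenvalues by Theorem 7 --- precisely when (8) breaks down, and verifying that the near-cancellation in $D$ drives $F_{+}$, and therefore $d_{n_{k}}$, to zero.

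It remains to decode (8) number-theoretically for part $(b)$; since (8) amounts to (100) together with (104), it says that $q\alpha$ stays bounded away from every odd integer as $q$ runs through $\mathbb{N}$. For rational $\alpha=m/q$ in lowest terms the values $q'\alpha\bmod 2$ form a finite set, and the equation $q'\alpha=2p-1$ is solvable iff $q$ divides $q'$ and $(q'/q)m$ is odd, i.e.\ iff $m$ is odd; hence (8) holds iff $m$ is even, and part $(a)$ turns this into the asserted dichotomy. For irrational $\alpha$, failure of (8) means there are $q_{k}\rightarrow\infty$ and $p_{k}$ with $\left\vert q_{k}\alpha-(2p_{k}-1)\right\vert\rightarrow 0$, equivalently $\left\vert\alpha-(2p_{k}-1)/q_{k}\right\vert=o(q_{k}^{-1})$; reducing each fraction keeps the numerator odd and coprime to the denominator, which produces the stated Diophantine condition, again via part $(a)$.
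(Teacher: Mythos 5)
Your proposal is correct and follows the paper's proof of Theorem 10 essentially step for step: the same interval decomposition (Proposition 2 on $[n^{-3},\rho]$, (17) on $[\rho,\pi-\rho]$, Remark 3 on $[\pi-\rho,\pi]$, and Lemma 2 via (115) and (118) on $I_{1}\cup I_{2}\cup I_{3}$) for sufficiency, Proposition 1 plus the even/odd split of $\{q_{k}\}$ and a near-cancellation in $D$ driving $F_{+}\to0$ for necessity, and part (b) reduced to part (a) by the arithmetic of $\arg((ab)^{n})$. The one step you flag as "the hard part" is resolved in the paper exactly as you anticipate: it takes $t_{k}=t_{n_{k}}$ from (96), where Theorem 8 simultaneously guarantees that the eigenvalues are simple (so $d_{n_{k}}(t_{k})$ is defined) and that $(4\pi n_{k}t_{k})^{2}=-\beta_{n_{k}}\alpha_{n_{k}}(1+o(1))$, whence $G(\lambda_{n_{k}}(t_{k}))=o(\beta_{n_{k}})$ and $F_{+}=o(1)$ by (124); your explicit number-theoretic decoding of (8) for part (b) is in fact more detailed than the paper's, which disposes of (b) with the single remark that it follows from (a).
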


\begin{proof}
It is clear that $(b)$ follows from $(a).$ First we prove the sufficiency of
$(a)$. For this assuming that $\mid a\mid=\mid b\mid$ and (8) holds, we prove
(7) for large $n$. If $t\in\lbrack n^{-3},\rho],$ then by Proposition 2, (7)
holds. Using (115), (118), and Lemma 2 we get (7) in the case $t\in
\lbrack0,n^{-3}]$. Hence (7) for $t\in\lbrack0,\rho]$ is proved. In the same
way, by using Remark 3, we prove (7) for $t\in\lbrack\pi-\rho,\pi].$ If
$t\in\lbrack\rho,\pi-\rho],$ then (7) follows from (17). Thus (7) folds for
$t\in\lbrack0,\pi]$ and $n>N.$ In the same way we prove it for $t\in(-\pi,0)$
and $n<-N$.

It remains to prove the necessity of $(a)$. Suppose that $H(a,b)$ is an
asymptotically spectral operator. Then by Proposition 1, $\mid a\mid=\mid
b\mid.$ Now we prove that (8) holds. Suppose to the contrary that (8) does not
hold. Then there exists a sequence of pairs $\{(q_{k},p_{k})\}$ such that
$q_{k}\alpha-(2p_{k}-1)\rightarrow0.$ First suppose that the sequence
$\{q_{k}\}$ contains infinite many of even number. Then one can easily verify
that there exists a sequence $\{n_{k}\}$ satisfying
\[
\operatorname{Im}((ab)^{2n_{k}})=o((ab)^{2n_{k}}),\text{ }\lim_{k\rightarrow
\infty}sgn(\operatorname{Re}((ab)^{2n_{k}}))=-1.
\]
By Theorem 8, for the sequence $\{t_{n_{k}}\}$ defined by (96) and now, for
simplicity, redenoted by $\{t_{k}\}$ the eigenvalues $\lambda_{n_{k},j}%
(t_{k})$ are simple and the following relations hold%
\[
(4\pi n_{k}t_{k})^{2}=-\operatorname{Re}(\beta_{n_{k}}\alpha_{n_{k}%
})(1+o(1))=-(\beta_{n_{k}}\alpha_{n_{k}})(1+o(1)),\text{ }%
\]%
\[
(4\pi n_{k}t_{k})^{2}+(\beta_{n_{k}}\alpha_{n_{k}})=o(\beta_{n_{k}}%
^{2}),\text{ }4\pi n_{k}t_{k}\sim\beta_{n_{k}}\sim\alpha_{n_{k}}.
\]
Therefore we have $G(\lambda_{n_{k}}(t_{k}))=o(\beta_{n_{k}}).$ It with (124)
implies that $F_{+}(\lambda_{n_{k}}(t_{k}))=o(1)$ as $k\rightarrow\infty.$
Thus $\left\vert d_{n_{k}}(t_{k})\right\vert \rightarrow0$ as $k\rightarrow
\infty$, due to (115) and (118). In the same way we prove it when $\{q_{k}\}$
contains infinite number of odd number. It contradicts to the assumption that
$H(a,b)$ is an asymptotically spectral operator, due to Definition 2
\end{proof}

Now using Theorem 1, that is, Theorem 10 (a) we prove Corollary 1 (see introduction).

\textbf{The proof of Corollary 1.} Since any self-adjoint operator is
spectral, we need to prove that if $H(a,b)$ is a spectral operator and $ab$ is
real, then (4) is a real potential. By Definitions 1 and 2 the spectral
operator is also asymptotically spectral operator. Thus $ab$ is real and by
Theorem 1, (8) holds. If $ab<0$ then $\alpha-1=0,$where $\alpha=\pi^{-1}%
\arg(ab)$, which contradicts (8). Hence we have $ab>0.$ On the other hand,
Proposition 1 implies that $\mid a\mid=\mid b\mid.$ From the last two
relations we obtain $b=\overline{a}$ . It means that (4) is a real potential
and $H(a,b)$ is a self-adjoint operator.

\section{On the Spectral Expansion of $H(a,b)$}

Now we consider the forms of the spectral expansion of $H(a,b)$. For this as
is noted in the introduction we need to investigate in detail the ESS and ESS
at infinity for $H(a,b)$. Besides, we use the following results of the papers
[14, 17, 18] formulated as summary.

\begin{summary}
$(a)$ The spectral expansion has the elegant form (9) if and only if $L(q)$
has no ESS and ESS at infinity (see page 7 of [18]).

$(b)$ If $L(q)$ has no ESS at infinity, then the number of ESS is at most
finite and the spectral expansion has the asymptotically elegant form (13)
(see Theorem 3.13 of [18]).

$(c)$ ESS of $L(q)$ is a multiple 2-periodic eigenvalue. Note that the
eigenvalues of $L_{0}(q)$ and $L_{\pi}(q)$ is called as 2-periodic
eigenvalues. If the geometric multiplicity of the multiple 2-periodic
eigenvalue is 1, then it is ESS (see Proposition 4 of [17]).

$(d)$ If $0<\left\vert ab\right\vert <16/9$ then all 2-periodic eigenvalues of
$H(a,b)$ are simple (see Theorems 13 and 15 of [14]).

$(e)$ If \ $\Lambda=\lambda_{n}(t_{0})$ is a multiple eigenvalue, then the sum
of the expressions $a_{k}(t)\Psi_{k,t}(x)$ for $k\in\left\{  s\in
\mathbb{Z}:\lambda_{s}(t_{0})=\Lambda\right\}  $ is integrable in some
neighborhood of $t_{0}$. If $\Lambda$ is an ESS then at least two of these
expressions are nonintegrable (see Remark 2 of [17]).
\end{summary}

As we noted at the end of introduction, in this section we consider the
spectral expansion of $H(a,b)$ for all potential of the form (4) by dividing
it into two complementary cases:

Case 1: $ab\neq0$ and Case 2: either $a=0$ or $b=0.$

First we consider Case 1 and prove that in this case the operator $H(a,b)$ has
no ESS at infinity. Therefore by Summary 1$(b)$ the number of ESS is at most
finite and the spectral expansion has the asymptotically elegant form (13).
For this, due to Definition 5, we need to study the existence and the behavior
of
\begin{equation}
\int_{(-\pi,\pi]}\left\vert d_{n}(t)\right\vert ^{-1}dt
\end{equation}
for large $n$. Note that the estimations that was done in Section 4 for
$\left\vert d_{n}(t)\right\vert ^{-1}$ are not enough for the estimations of
(125). Here we need more sharp and complicated estimations due to the
followings. In Section 4 some estimations for $(\Psi_{n,t},\Psi_{n,t}^{\ast})$
were done under assumption $\mid a\mid=\mid b\mid$ (first condition) and the
estimations for $t\in I_{2},$ where a multiple eigenvalues may appear, were
done under condition (8) (second condition), while in this section the
estimations are done for all cases of the potential (4). Moreover in Section 4
we considered only boundlessness of \ $\left\vert d_{n}(t)\right\vert ^{-1},$
while here we consider its integrability and investigate the limit of (125) as
$n\rightarrow\infty.$

In this section to estimate $(\Psi_{n,t},\Psi_{n,t}^{\ast})$ we also use
(118). However, if the first condition does not hold, say if $\mid a\mid<\mid
b\mid,$ then the multiplicand $u_{n}(t)\overline{u_{n}^{\ast}}(t)$ and hence
the left-hand side of (118) is $O\left(  \mid a\mid^{n}\mid b\mid^{-n}\right)
.$ Therefore (115) is ineffective for the estimation of $(\Psi_{n,t}%
,\Psi_{n,t}^{\ast}).$ For this first of all we consider the Bloch functions in
detail which was done in Theorem 11. Moreover, in Section 4 we have used
essentially the second condition (8) to estimate $(\Psi_{n,t},\Psi_{n,t}%
^{\ast})$ for $t\in I_{2}.$ To do this estimation without condition (8) we
develop a new approach at the end in this section. Besides we use the first
statements of Lemma 2(a) and Lemma 2(b) which hold without assumption $\mid
a\mid=\mid b\mid.$ Finally, note that in the proof of Lemma 2(b) we proved
that if (102) holds then (119) holds uniformly for $t\in I_{2}.$

\begin{theorem}
If $\mid a\mid<\mid b\mid$ then%
\begin{equation}
(\Psi_{n,t},\Psi_{n,t}^{\ast})=u_{n}(t)\overline{u_{n}^{\ast}(t)}%
[1]+v_{n}(t)\overline{v_{n}^{\ast}(t)}[1]+O\left(  n^{-1}a^{n}b^{-n}\right)  .
\end{equation}

\end{theorem}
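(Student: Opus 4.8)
The goal is to refine the inner product $(\Psi_{n,t},\Psi_{n,t}^{\ast})$ in the regime $\mid a\mid<\mid b\mid$, where the earlier estimate (115) degenerates because $u_n(t)\overline{u_n^{\ast}(t)}$ is comparable to $\mid a\mid^n\mid b\mid^{-n}$. We already know the structural decompositions (35) and (112) for $\Psi_{n,t}$ and $\Psi_{n,t}^{\ast}$, together with the orthogonality relations (36) and (113). The plan is to compute the inner product directly from these two expansions, keeping track of the error terms more carefully than in the derivation of (115), so that the $O(n^{-1})$ remainder there is replaced by the sharper $O(n^{-1}a^nb^{-n})$.

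Let me think carefully about how the error refines.

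First I would write out $(\Psi_{n,t},\Psi_{n,t}^{\ast})$ by substituting the expansions (35) and (112). The leading contributions are the diagonal terms $u_n(t)\overline{u_n^{\ast}(t)}$ and $v_n(t)\overline{v_n^{\ast}(t)}$, coming from pairing $e^{i(2\pi n+t)x}$ with itself and $e^{i(-2\pi n+t)x}$ with itself. The cross terms between the two exponentials vanish by orthogonality of $e^{i(2\pi n+t)x}$ and $e^{i(-2\pi n+t)x}$, and the terms involving $h_{n,t}$ or $h_{n,t}^{\ast}$ paired against the exponentials vanish by (36) and (113). This leaves the single genuinely small term $(h_{n,t},h_{n,t}^{\ast})$, which by Cauchy–Schwarz is $O(n^{-2})$. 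So the naive bookkeeping gives the two diagonal terms plus $O(n^{-2})$; the subtlety is to explain the $[1]=1+O(n^{-2})$ factors and the precise $O(n^{-1}a^nb^{-n})$ remainder.

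**The key refinement.** The hard part will be showing that the remainder is genuinely of order $n^{-1}a^nb^{-n}$ rather than merely $O(n^{-1})$. The idea is that the factors multiplying $u_n\overline{u_n^{\ast}}$ and $v_n\overline{v_n^{\ast}}$ are not exactly $1$ but $1+O(n^{-2})$, which is already captured by the $[1]$ notation; these arise from the $O(n^{-2})$ corrections in the normalization identities in (36) and (114). For the genuine remainder $O(n^{-1}a^nb^{-n})$, I would exploit the asymmetry forced by $\mid a\mid<\mid b\mid$: from (108)–(110) one has $u_n(t)=v_n(t)\,O(a^nb^{-n})$, so $u_n\overline{u_n^{\ast}}$ is of order $a^nb^{-n}$ relative to $v_n\overline{v_n^{\ast}}$, while $h_{n,t}$ and $h_{n,t}^{\ast}$ each carry an explicit factor reflecting the dominant $v$-coefficient. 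The cross remainder $(h_{n,t},h_{n,t}^{\ast})$ must be re-expanded one further step via the integral equation defining $h_{n,t}$ (the iteration behind (35)), showing that the portion of $h_{n,t}$ correlating with $h_{n,t}^{\ast}$ picks up the small factor $a^nb^{-n}$, so that the net cross contribution is $O(n^{-1}a^nb^{-n})$ rather than $O(n^{-1})$.

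**Assembling the estimate.** Once the two diagonal terms are written with their $[1]$ factors and the cross term is bounded by $O(n^{-1}a^nb^{-n})$, the identity (126) follows by collecting terms. I expect the cleanest route is to first establish the decomposition schematically as $u_n\overline{u_n^{\ast}}+v_n\overline{v_n^{\ast}}+(h_{n,t},h_{n,t}^{\ast})$, then insert the multiplicative corrections from the normalization relations (absorbing them into $[1]$), and finally estimate the cross term using the explicit smallness $u_n\sim a^nb^{-n}v_n$ dictated by the ratio computation analogous to (108). The main obstacle is controlling $(h_{n,t},h_{n,t}^{\ast})$ at the sharp order $n^{-1}a^nb^{-n}$; a crude Cauchy–Schwarz only gives $O(n^{-2})$, which is not directly comparable to the claimed remainder and must be reconciled by tracking how the small $u$-component propagates into the higher Fourier modes of $h_{n,t}$.
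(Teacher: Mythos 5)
Your starting decomposition is the same as the paper's: by (35), (36), (112), (113) the inner product is exactly $u_{n}(t)\overline{u_{n}^{\ast}(t)}+v_{n}(t)\overline{v_{n}^{\ast}(t)}+(h_{n,t},h_{n,t}^{\ast})$, and you correctly identify that the entire content of the theorem is sharpening the crude Cauchy--Schwarz bound $(h_{n,t},h_{n,t}^{\ast})=O(n^{-2})$. But your proposed mechanism --- re-expanding $(h_{n,t},h_{n,t}^{\ast})$ ``one further step via the integral equation'' --- cannot produce the result, and this is a genuine gap rather than a missing detail. Each application of the recursion $(\Psi_{n,t},e^{i(2\pi k+t)x})=\bigl(a(\Psi_{n,t},e^{i(2\pi(k+1)+t)x})+b(\Psi_{n,t},e^{i(2\pi(k-1)+t)x})\bigr)\big/\bigl(\lambda_{n}(t)-(2\pi k+t)^{2}\bigr)$ gains only a polynomial factor per step (via (20)), whereas the target remainder $O(n^{-1}a^{n}b^{-n})$ is exponentially small in $n$ because $|a|<|b|$. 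No bounded number of extra iterations turns $O(n^{-2})$ into an exponentially small quantity. The paper's proof iterates the recursion an $n$-dependent number of times: it fixes $s$ of order $n/\ln n$ chosen so that $n^{-s}=O\left(n^{-1}a^{n}b^{-n}(|a|+|b|)^{-s}\right)$ (formula (127)), splits $h_{n,t}$ into the near Fourier modes $f_{n}$ (indices $2\pi(\pm n+k)+t$ with $0<|k|\le s$) and the far modes $g_{n}$, shows by $s$ iterations that each far coefficient is $O\left(a^{n}/(knb^{n})\right)$ outright, and for the near coefficients iterates $s$ times while isolating at each step the term containing $u_{n}(t)$ (resp.\ $v_{n}(t)$), obtaining $(\Psi_{n,t},e^{i(2\pi(n+k)+t)x})=u_{n}(t)O(n^{-2})+O\left(n^{-1}k^{-1}a^{n}b^{-n}\right)$ and its $v_{n}$-analogue. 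This structure --- the $n$-dependent iteration depth balancing the growth $(|a|+|b|)^{s}$ against the product of denominators of size $n^{s}$, together with the isolation of the $u_{n},v_{n}$ terms --- is the idea your proposal lacks.

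Two secondary inaccuracies are worth flagging. First, the factors $[1]=1+O(n^{-2})$ do not come from the normalization identities (36) and (113), which control $|u_{n}|^{2}+|v_{n}|^{2}$ but say nothing about the products $u_{n}\overline{u_{n}^{\ast}}$; in the paper they arise from reassembling the $u_{n}(t)O(n^{-2})$ and $v_{n}(t)O(n^{-2})$ components of the near Fourier coefficients of $h_{n,t}$ and $h_{n,t}^{\ast}$ into $u_{n}\overline{u_{n}^{\ast}}O(n^{-2})$ and $v_{n}\overline{v_{n}^{\ast}}O(n^{-2})$. Second, your claim $u_{n}(t)=v_{n}(t)\,O(a^{n}b^{-n})$ is at odds with the branch used in this section: by Lemma 3 of the paper, $|a|<|b|$ forces $u_{n}(t)=1+O(n^{-1})$ for all $t\in[0,\rho]$, so the exponential smallness that makes the sharp remainder indispensable sits in $\overline{u_{n}^{\ast}(t)}$ (cf.\ Lemma 4(b), where $|u_{n}^{\ast}(t)|$ can be as small as $c\,|a^{n}b^{-n}|$), not in $u_{n}(t)$ itself.
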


\begin{proof}
First we write the function $h_{n,2,t}$ defined in (35) as sum of $f_{n}$ and
$g_{n}$ defined by
\[
f_{n}(x)=%
%TCIMACRO{\tsum \limits_{0<\left\vert k\right\vert \leq s}}%
%BeginExpansion
{\textstyle\sum\limits_{0<\left\vert k\right\vert \leq s}}
%EndExpansion
\left(  \left(  \Psi_{n,t},e^{i(2\pi\left(  n+k\right)  +t)x}\right)
e^{i(2\pi\left(  n+k\right)  +t)x}+\left(  \Psi_{n,t},e^{i(2\pi\left(
-n+k\right)  +t)x}\right)  e^{i(2\pi\left(  -n+k\right)  +t)x}\right)
\]
and
\[
g_{n}(x)=%
%TCIMACRO{\tsum \limits_{k:\left\vert k\pm n\right\vert >s}}%
%BeginExpansion
{\textstyle\sum\limits_{k:\left\vert k\pm n\right\vert >s}}
%EndExpansion
\left(  \Psi_{n,t},e^{i(2\pi k+t)x}\right)  e^{i(2\pi k+t)x},
\]
where $s$ is a positive integer of order $n/\ln n$ such that
\begin{equation}
n^{-s}=O\left(  n^{-1}a^{n}b^{-n}\left(  \left\vert a\right\vert +\left\vert
b\right\vert \right)  ^{-s}\right)  .
\end{equation}
If $\left\vert k\pm n\right\vert >s$ then iterating the formula%
\begin{equation}
(\Psi_{n,t},e^{i(2\pi k+t)x})=\frac{a(\Psi_{n,t},e^{i(2\pi\left(  k+1\right)
+t)x})+b(\Psi_{n,t},e^{i(2\pi\left(  k-1\right)  +t)x})}{(\lambda_{n}(t)-(2\pi
k+t)^{2})}%
\end{equation}
$s$ times we obtain%
\[
(\Psi_{n,t},e^{i(2\pi k+t)x})=\sum_{n_{1},n_{2},...,n_{s}}\frac{q_{n_{1}%
}q_{n_{2}}...q_{n_{s}}(\Psi_{n,t},e^{i(2\pi(k-n_{1}-n_{2}-...-n_{s})+t)x})}{%
%TCIMACRO{\tprod \limits_{j=1,2,...,s}}%
%BeginExpansion
{\textstyle\prod\limits_{j=1,2,...,s}}
%EndExpansion
[\lambda_{n}(t)-(2\pi(k-n_{1}-n_{2}-...-n_{j})+t)^{2}]},
\]
where $n_{j}$ is either $1$ or $-1$ and $q_{-1}=a,$ $q_{1}=b.$ Therefore using
(20) and (127) we obtain
\[
\left\vert (\Psi_{n,t},e^{i(2\pi k+t)x})\right\vert \leq\frac{\left(
\left\vert a\right\vert +\left\vert b\right\vert \right)  ^{s}}{%
%TCIMACRO{\tprod \limits_{j=1,2,...,s}}%
%BeginExpansion
{\textstyle\prod\limits_{j=1,2,...,s}}
%EndExpansion
\left\vert n^{2}-(n+k-j)^{2}\right\vert }=O\left(  \frac{\left(  \left\vert
a\right\vert +\left\vert b\right\vert \right)  ^{s}}{kn^{s}}\right)  =O\left(
\frac{a^{n}}{knb^{n}}\right)  .
\]
If $0<\left\vert k\right\vert \leq s$ then we iterate the formula
\[
(\Psi_{n,t},e^{i(2\pi\left(  n+k\right)  +t)x})=\frac{a(\Psi_{n,t}%
,e^{i(2\pi\left(  \left(  n+k\right)  +1\right)  +t)x})+b(\Psi_{n,t}%
,e^{i(2\pi\left(  \left(  n+k\right)  -1\right)  +t)x})}{(\lambda_{n}%
(t)-(2\pi\left(  n+k\right)  +t)^{2})}%
\]
obtained from (128) by replacing $k$ with $n+k$ as follows. After each
iteration we isolate the term containing $(\Psi_{n,t}(x),e^{i(2\pi n+t)x})$
and iterate the other terms. \ Continuing these procedure $s$ times and
estimating as above we obtain
\[
(\Psi_{n,t},e^{i(2\pi\left(  n+k\right)  +t)x})=u_{n}(t)O(n^{-2})+O\left(
n^{-1}k^{-1}a^{n}b^{-n}\right)  .
\]
In the same way we obtain
\[
(\Psi_{n,t},e^{i(2\pi\left(  -n+k\right)  +t)x})=v_{n}(t)O(n^{-2})+O\left(
n^{-1}k^{-1}a^{n}b^{-n}\right)  .
\]
The same estimations holds for the eigenfunction $\Psi_{n,t}^{\ast}.$ These
estimations imply (126).
\end{proof}

Now to estimate $\left\vert d_{n}(t)\right\vert $ we use (126) and the
following formula
\begin{equation}
u_{n}\overline{u_{n}^{\ast}}[1]+v_{n}\overline{v_{n}^{\ast}}[1]=[1]u_{n}%
\overline{u_{n}^{\ast}}F_{+}(\lambda_{n}(t)),
\end{equation}
where $F_{+}(\lambda_{n}(t))$ is defined in (118) and the proof of (129) can
be obtained by repeating the proofs of (118). In Lemma 2 the expression
$u_{n}\overline{u_{n}^{\ast}}$ is estimated under condition $\left\vert
a\right\vert =\left\vert b\right\vert .$ Now we estimate it if this condition
does not holds and without loss of generality assume that $\left\vert
a\right\vert <\left\vert b\right\vert .$ First we estimate $u_{n}.$

\begin{lemma}
If $\left\vert a\right\vert <\left\vert b\right\vert $ and $t\in\lbrack
0,\rho],$ then
\begin{equation}
u_{n}(t)=1+O(n^{-1}).
\end{equation}

\end{lemma}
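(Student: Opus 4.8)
The plan is to use that, for $|a|<|b|$, the coefficient $\alpha_n$ (of order $|a|^{2n}$) is exponentially smaller than $\beta_n$ (of order $|b|^{2n}$), so the ``$+n$'' mode $u_n(t)$ dominates $\Psi_{n,t}$. Concretely, relation (110) gives $v_n(t)=\alpha_n u_n(t)[1]/E_+(\lambda_n(t))$, so once I secure a uniform lower bound $|E_+(\lambda_n(t))|\gtrsim\varepsilon_n$ on $[0,\rho]$ (with $\varepsilon_n=\sqrt{|\alpha_n\beta_n|}$), I obtain
\[
\left|\frac{v_n(t)}{u_n(t)}\right|=\frac{|\alpha_n|}{|E_+(\lambda_n(t))|}\bigl(1+O(n^{-2})\bigr)\lesssim\frac{|\alpha_n|}{\varepsilon_n}=\sqrt{\frac{|\alpha_n|}{|\beta_n|}}=\left(\frac{|a|}{|b|}\right)^{n},
\]
which decays exponentially. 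Since by (36) one has $|u_n(t)|^2+|v_n(t)|^2=1+O(n^{-2})$, in particular $|u_n|\le 2$, the above forces $|v_n(t)|\lesssim(|a|/|b|)^n$ and hence $|u_n(t)|^2=1+O(n^{-2})$; fixing the free unimodular phase of the normalized eigenfunction so that $u_n(t)>0$ then yields $u_n(t)=1+O(n^{-2})$, which is (130). So everything reduces to the uniform lower bound on $|E_+|$, and I would split $[0,\rho]$ according to the size of $4\pi nt$ relative to $\varepsilon_n$.

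In the ``large'' regime $4\pi nt\ge\tfrac54\varepsilon_n$ I first note that $4\pi nt_n\le(1+o(1))\varepsilon_n$ (from (96), since $(4\pi nt_n)^2\le|\alpha_n\beta_n|(1+O(n^{-2}))$), so $4\pi nt\ge\tfrac54\varepsilon_n$ forces $t>t_n$, whence by Remark 4 $\lambda_n(t)$ satisfies (45), i.e. $s(t)=1$. Then in (111) $E_+=W+4\pi nt[1]$ with $W=\sqrt{(4\pi nt)^2[1]+\alpha_n\beta_n[1]}$; the principal branch in (46) has $\operatorname{Re}W\ge0$, so $\operatorname{Re}E_+\ge 4\pi nt\ge\tfrac54\varepsilon_n$ and thus $|E_+|\gtrsim\varepsilon_n$. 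This case automatically covers the full outer range $t\in[n^{-3},\rho]$, where $4\pi nt\ge4\pi n^{-2}\gg\varepsilon_n$ because $\varepsilon_n$ is super-exponentially small.

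The delicate ``resonant'' regime $4\pi nt<\tfrac54\varepsilon_n$ (that is $I_1\cup I_2$) is the main obstacle, since here both summands in (111) are comparable to $\varepsilon_n$ and possible cancellation in $(4\pi nt)^2[1]+\alpha_n\beta_n[1]$ could a priori make $W$, and with it $E_+$, small; this is exactly the point at which Section 4 had to invoke hypothesis (8) (Lemma 2(b)). The key idea is that here I do \emph{not} estimate $E_+$ directly but instead use the product identity (116), $E_+(\lambda_n(t))E_-(\lambda_n(t))=\alpha_n\beta_n[1]$, so that $|E_+|=\varepsilon_n^2(1+O(n^{-2}))/|E_-|$ and only an \emph{upper} bound on $E_-$ is needed. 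Since $4\pi nt\le\tfrac54\varepsilon_n$ and $|\alpha_n\beta_n|=\varepsilon_n^2$, the elementary bounds $|4\pi nt[1]|\lesssim\varepsilon_n$ and $|W|=\bigl|(4\pi nt)^2[1]+\alpha_n\beta_n[1]\bigr|^{1/2}\lesssim\varepsilon_n$ give $|E_-|\le|W|+|4\pi nt[1]|\lesssim\varepsilon_n$, whence $|E_+|\gtrsim\varepsilon_n$. Crucially, this argument uses neither $|a|=|b|$ nor the arithmetic condition (8): the reason (8) entered Lemma 2(b) is that there one needed the two-sided estimate $F_+\sim1$, i.e. a lower bound on the \emph{numerator} $G$, whereas here I need only the lower bound on the \emph{denominator} $E_+$.

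Putting the two regimes together gives $|E_+(\lambda_n(t))|\gtrsim\varepsilon_n$ uniformly for $t\in[0,\rho]$, and the displayed chain in the first paragraph then completes the proof. (Alternatively, on $I_1\cup I_3$ one may simply quote the estimates $E_+\sim\varepsilon_n$ and $E_+\sim4\pi nt$ already established in the proof of Lemma 2(a), which hold without the assumption $|a|=|b|$, and treat only $I_2$ by the product identity.) I expect the only genuinely non-routine step to be the recognition that in $I_2$ the needed bound is one-sided and follows from (116) rather than from the arithmetic of $\arg(ab)$.
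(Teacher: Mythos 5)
Your proof is correct and follows essentially the same route as the paper: the same splitting of $[0,\rho]$ according to whether $4\pi nt$ is small or large relative to $\varepsilon_n$, the same use of $t>t_n$ (via (96)), $s(t)=1$ and $\operatorname{Re}\sqrt{D}\ge 0$ in the outer regime, and the conclusion from the normalization (36). The only, purely cosmetic, difference is in the resonant window, where the paper reads off $v_n/u_n=E_-(\lambda_n(t))/(\beta_n[1])$ directly from (109) together with the upper bound $|E_-(\lambda_n(t))|<5\varepsilon_n$, while you derive the identical estimate from (110) combined with the product identity (116) — algebraically the same bound, since $\alpha_n[1]/E_+=E_-[1]/\beta_n$.
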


\begin{proof}
First study the case $4\pi nt\in\left[  0,2\varepsilon_{n}\right]  ,$ where
$\varepsilon_{n}$ is defined in Remark 5. Then by (111) $\left\vert
E_{-}(\lambda_{n}(t))\right\vert <5\varepsilon_{n}.$ Therefore, using (109)
and then taking into account that%
\begin{equation}
\left\vert \alpha_{n}\right\vert \left\vert \beta_{n}\right\vert
^{-1}=\left\vert a^{2n}\right\vert \left\vert b^{2n}\right\vert ^{-1}=O\left(
n^{-2}\right)
\end{equation}
(see Lemma 1) we obtain
\[
\left\vert v_{n}(t)\right\vert =\frac{\left\vert E_{-}(\lambda_{n}%
(t))\right\vert }{\left\vert \beta_{n}[1]\right\vert }\left\vert
u_{n}(t)\right\vert \leq\frac{6\sqrt{\left\vert \alpha_{n}\beta_{n}\right\vert
}}{\left\vert \beta_{n}\right\vert }\left\vert u_{n}(t)\right\vert =O\left(
\frac{1}{n}\right)  .
\]
Now consider the case $4\pi nt>2\varepsilon_{n}.$ Then by Theorem 8 we have
$t>t_{n}$ and in formula (111) we should take $s(t)=1.$ Hence using (111) and
(46) one can conclude that $\left\vert E_{+}(\lambda_{n}(t))\right\vert
>\varepsilon_{n}.$ Therefore from (110) and (131) it follows that
\[
\left\vert v_{n}(t)\right\vert =\frac{\left\vert \alpha_{n}\right\vert
}{\left\vert E_{+}(\lambda_{n}(t))\right\vert }\left\vert u_{n}%
(t)[1]\right\vert \leq\frac{\sqrt{\left\vert \alpha_{n}\right\vert }}%
{\sqrt{\left\vert \beta_{n}\right\vert }}\left\vert u_{n}(t)[1]\right\vert
=O\left(  \frac{1}{n}\right)  .
\]
These estimations for $\left\vert v_{n}(t)\right\vert $ together with (36)
imply (130).
\end{proof}

Now using the last lemma and the formulas (129) \ and (126) we estimate
$d_{n}(t).$

\begin{lemma}
Let $I_{4}$ \ and $I_{5}$ be respectively the set of all $t\in I_{3}$ such
that $4\pi nt$ belongs to the intervals $\left[  \frac{5}{4}\varepsilon
_{n},\left\vert \beta_{n}\right\vert \right]  $ and $(\left\vert \beta
_{n}\right\vert ,4\pi n^{-2}]$, where $I_{k}$ is defined in Remark 5. Let
$\left\vert a\right\vert <\left\vert b\right\vert $.

$\left(  a\right)  $ If $t\in I_{5},$ then $\left\vert d_{n}(t)\right\vert
\sim1.$

$\left(  b\right)  $ If $t\in\left(  I_{1}\cup I_{4}\right)  ,$ then there
exists $c_{7}>0$ such that
\begin{equation}
\left\vert d_{n}(t)\right\vert \geq c_{7}\left\vert a^{n}b^{-n}\right\vert .
\end{equation}

$\left(  c\right)  $ If $t\in I_{2}$ and (102) holds, then (132) is satisfied.
\end{lemma}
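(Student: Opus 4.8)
The plan is to funnel all three cases through the single reduction of $d_n(t)=(\Psi_{n,t},\Psi_{n,t}^{\ast})$ supplied by (126) and (129). Combining them and inserting Lemma 3, i.e. $u_n(t)=1+O(n^{-1})$, the cross term $u_n\overline{u_n^{\ast}}F_{+}$ loses its $u_n$ factor and one obtains
\[
d_n(t)=\overline{u_n^{\ast}(t)}\,F_{+}(\lambda_n(t))\,[1]+O\!\left(n^{-1}a^{n}b^{-n}\right),
\]
where $[1]=1+O(n^{-2})$ and $F_{+}=G/E_{+}$ is as in (118). The relative $O(n^{-1})$ error produced by replacing $u_n$ by $1$ is harmless, since in every case the main term is $\gtrsim|a^{n}b^{-n}|$ (or $\sim1$) and the error is absorbed into the remainder. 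I would then rewrite this in a second, more convenient form using the adjoint relations (115) together with (116): from $\overline{E_-}\,u_n^{\ast}=\overline{\alpha_n}v_n^{\ast}[1]$ and $E_-E_+=\alpha_n\beta_n[1]$ one gets the identity $\overline{u_n^{\ast}}=(E_{+}/\beta_n)\overline{v_n^{\ast}}[1]$ (note $E_{\pm}\neq0$ because $\alpha_n\beta_n\neq0$ in Case 1), hence
\[
d_n(t)=\frac{G(\lambda_n(t))}{\beta_n}\,\overline{v_n^{\ast}(t)}\,[1]+O\!\left(n^{-1}a^{n}b^{-n}\right).
\]
The whole proof is now a region-by-region estimation of the surviving factor, the transition between the two forms occurring exactly at $4\pi nt=|\beta_n|$, which is why $I_3$ is split into $I_4$ and $I_5$.

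For part $(a)$, $t\in I_5$, i.e. $4\pi nt>|\beta_n|>\varepsilon_n$, I would use the first form. Since $s(t)=1$ on $I_3$ (Remark 5) and $(4\pi nt)^2$ dominates $|\alpha_n\beta_n|$, the square root in (111) is $\sim 4\pi nt$, so $E_{+}\sim G\sim4\pi nt$ and $F_{+}\sim1$ with no cancellation. The second relation of (115) gives $v_n^{\ast}/u_n^{\ast}=\overline{\beta_n}/\overline{E_{+}}[1]$, whose modulus is $<1/2$ on $I_5$; with the normalization $\left\vert u_n^{\ast}\right\vert^2+\left\vert v_n^{\ast}\right\vert^2=1+O(n^{-2})$ this forces $\left\vert u_n^{\ast}\right\vert\sim1$. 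Hence $\left\vert d_n(t)\right\vert\sim1$.

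For parts $(b)$ and $(c)$ I would use the second form and establish $\left\vert v_n^{\ast}\right\vert\geq c>0$ together with $\left\vert G\right\vert\gtrsim\varepsilon_n$, which yields $\left\vert G/\beta_n\right\vert\gtrsim\varepsilon_n/|\beta_n|=|a^{n}b^{-n}|$ and therefore (132), the $O(n^{-1}a^{n}b^{-n})$ remainder being smaller than the main term by a factor $n^{-1}$. On $I_1\cup I_4\cup I_2$ one has $4\pi nt\le|\beta_n|$, so $v_n^{\ast}/u_n^{\ast}=\overline{\beta_n}/\overline{E_{+}}[1]$ has modulus $\gtrsim1$ and the normalization gives $\left\vert v_n^{\ast}\right\vert\geq c$. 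For the lower bound on $G$: on $I_1$ the bound $4\pi nt\le\tfrac14\varepsilon_n$ makes $(4\pi nt)^2+\alpha_n\beta_n\sim\alpha_n\beta_n$, so $\sqrt{\cdots}\sim\varepsilon_n$ and $\left\vert G\right\vert\sim\varepsilon_n$ unconditionally; on $I_4$, where $4\pi nt\geq\tfrac54\varepsilon_n$ and $s(t)=1$, the radicand satisfies $\left\vert (4\pi nt)^2+\alpha_n\beta_n\right\vert\geq\tfrac{9}{16}\left\vert\alpha_n\beta_n\right\vert$, whence $\left\vert G\right\vert\sim 4\pi nt\geq\tfrac54\varepsilon_n$.

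The single genuine obstacle is $I_2$ in part $(c)$, where $4\pi nt\sim\varepsilon_n$ and the radicand $(4\pi nt)^2+\alpha_n\beta_n$ can vanish — precisely the configuration in which $\lambda_n(t)$ becomes a double eigenvalue by Theorem 7 and $d_n$ could collapse to $0$. Here I would invoke hypothesis (102): it forces $\left\vert\operatorname{Im}(\alpha_n\beta_n)\right\vert\gtrsim\left\vert\alpha_n\beta_n\right\vert\sim\varepsilon_n^2$, so the radicand, whose imaginary part is then $\gtrsim\varepsilon_n^2$, cannot be small, and one recovers $E_{\pm}\sim G\sim\varepsilon_n$ — this is exactly the content of (123) already obtained in the proof of Lemma 2$(b)$. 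With $\left\vert G\right\vert\sim\varepsilon_n$ and $\left\vert v_n^{\ast}\right\vert\geq c$ the second form yields $\left\vert d_n(t)\right\vert\geq c_7|a^{n}b^{-n}|$, completing part $(c)$ and hence the lemma.
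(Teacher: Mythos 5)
Your proposal is correct and follows essentially the same route as the paper: both reduce $d_n(t)$ via (126), (129) and Lemma 3, use the adjoint relations (114) together with (116) and the normalization (113), and then run the same region-by-region bounds on $E_{\pm}$ and $G$ (including invoking (123) under hypothesis (102) on $I_2$). Your only deviation is cosmetic — you substitute $\overline{u_n^{\ast}}=(E_{+}/\beta_n)\overline{v_n^{\ast}}[1]$ and bound $\left\vert v_n^{\ast}\right\vert$ from below, whereas the paper keeps $u_n\overline{u_n^{\ast}}F_{+}$ and shows $\left\vert u_n^{\ast}\right\vert \geq c_9\left\vert a^n b^{-n}\right\vert$, an algebraically equivalent rearrangement (and note your citation of the adjoint relations should read (114), not (115)).
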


\begin{proof}
$\left(  a\right)  $ If $t\in I_{5},$ then it follows from from (111) and
(131) that $\left\vert E_{+}(\lambda_{n}(t))\right\vert \geq\frac{3}%
{2}\left\vert \beta_{n}\right\vert .$ Using it and (114) we obtain
\[
\left\vert v_{n}^{\ast}(t)\right\vert =\left\vert \beta_{n}\right\vert
\left\vert E_{+}(\lambda_{n}(t))\right\vert ^{-1}\left\vert u_{n}^{\ast
}(t)[1]\right\vert \leq\left\vert u_{n}^{\ast}(t)\right\vert .
\]
It with (113) implies that $\left\vert u_{n}^{\ast}(t)\right\vert >2/3$.
Therefore we get the proof of $(a)$ by using (126), (129), and (130) and
taking into account that (119) holds for $\left\vert a\right\vert
\neq\left\vert b\right\vert $ too.

$\left(  b\right)  $ Using (111) and the definitions of $I_{1}$ and $I_{4}$
one can easily see that
\begin{equation}
E_{+}(\lambda_{n}(t))\geq c_{8}\varepsilon_{n}%
\end{equation}
for $t\in\left(  I_{1}\cup I_{4}\right)  $. It with (114) and (131) implies
that%
\[
\left\vert u_{n}^{\ast}(t)\right\vert \geq c_{8}\left\vert v_{n}^{\ast}%
\frac{\sqrt{\left\vert \alpha_{n}\beta_{n}\right\vert }}{\beta_{n}%
[1]}\right\vert \geq c_{8}\left\vert v_{n}^{\ast}\frac{\sqrt{\alpha_{n}}%
}{\sqrt{\beta_{n}}[1]}\right\vert \geq c_{8}\left\vert v_{n}^{\ast}\frac
{a^{n}}{b^{n}}[1]\right\vert .
\]
Therefore using (113) we obtain $\left\vert u_{n}^{\ast}(t)\right\vert \geq
c_{9}\left\vert a^{n}b^{-n}\right\vert .$ Now (132) follows from (126), (129),
(130) and (119).

$\left(  c\right)  $ If (102) holds, then (133) holds for $t\in I_{2}$ too.
Using it and repeating the proof of $(b)$ we get the proof of $(c)$.
\end{proof}

The obtained estimations are enough to prove the main results if (102) holds.
Now we estimate $d_{n}(t)$ for $t\in I_{2}$ when (102) doesn't hold. This case
is the most complicated case. In this case to estimate $\left(  d_{k}%
(t)\right)  ^{-1}$ we use the following formula
\begin{equation}
\frac{-1}{d_{k}(t)}=\frac{\left\Vert \Phi_{t}(\cdot,\lambda_{n}(t))\right\Vert
\left\Vert \Phi_{-t}(\cdot,\lambda_{n}(t))\right\Vert }{\varphi(1,\lambda
_{n}(t))F^{^{\prime}}(\lambda_{n}(t))}=\frac{\left\Vert G_{t}(\cdot
,\lambda_{n}(t))\right\Vert \left\Vert G_{-t}(\cdot,\lambda_{n}(t))\right\Vert
}{\theta^{\prime}(1,\lambda_{n}(t))F^{^{\prime}}(\lambda_{n}(t))}%
\end{equation}
(see (29) and (32) of [17]), where
\[
\Phi_{t}(x,\lambda)=\varphi\theta(x,\lambda)+(e^{it}-\theta)\varphi
(x,\lambda),\text{ }G_{t}(x,\lambda)=\theta^{\prime}\varphi(x,\lambda
)+(e^{it}-\varphi^{\prime})\theta(x,\lambda),
\]
$\varphi=\varphi(1,\lambda),\varphi^{\prime}=\varphi^{\prime}(1,\lambda),$
$\theta=\theta(1,\lambda),$ $\theta^{\prime}=\theta^{\prime}(1,\lambda),$
$\varphi(x,\lambda)$ and $\theta(x,\lambda)$ are defined in (3). Using (3) and
taking into account the Wronskian equality $\theta\varphi^{\prime}%
-\theta^{\prime}\varphi=1$ we obtain $(e^{it}-\theta)(e^{-it}-\theta
)=-\varphi\theta^{\prime}.$ Therefore at least one of the following inequality
holds%
\[
\left\vert \frac{e^{it}-\theta}{\varphi}\right\vert \leq1,\text{ }\left\vert
\frac{e^{-it}-\theta}{\varphi}\right\vert \leq1,\text{ }\left\vert
\frac{e^{it}-\theta}{\theta^{\prime}}\right\vert \leq1,\text{ }\left\vert
\frac{e^{-it}-\theta}{\theta^{\prime}}\right\vert \leq1.
\]
Without loss of generality we suppose that the first equality holds. Then we
have
\[
\frac{\left\Vert \Phi_{t}(\cdot,\lambda_{n}(t))\right\Vert }{\left\vert
\varphi(1,\lambda_{n}(t))\right\vert }\leq\left\Vert \theta(\cdot,\lambda
_{n}(t))\right\Vert +\left\Vert \varphi(\cdot,\lambda_{n}(t))\right\Vert .
\]
It with (65) and the following asymptotic formulas (see page 63 of [3])
\[
\theta(x,\lambda)=\cos\mu x+\frac{\sin\mu x}{2\mu}Q(x)+\frac{\cos\mu x}%
{4\mu^{2}}(q(x)-q(0)-\frac{1}{2}Q^{2}(x))+O\left(  \mu^{-3}\right)  ,
\]%
\[
\varphi(x,\lambda)=\frac{\sin\mu x}{\mu}-\frac{\cos\mu x}{2\mu^{2}}%
Q(x)+\frac{\sin\mu x}{4\mu^{3}}(q(x)+q(0)-\frac{1}{2}Q^{2}(x))+O(\mu^{-4}),
\]
where $\mu=\sqrt{\lambda},$ $\left\vert \operatorname{Im}\mu\right\vert <3,$
$Q(x)=\int_{0}^{x}q(x)dx$ implies the following inequalities%
\[
\frac{\left\Vert \Phi_{t}(\cdot,\lambda_{n}(t))\right\Vert }{\left\vert
\varphi(1,\lambda_{n}(t))\right\vert }<c_{10},\text{ }\left\Vert \Phi
_{-t}(\cdot,\lambda_{n}(t))\right\Vert <c_{10}n^{-4}%
\]
for $t\in I_{2}.$ Therefore using the substitution $\lambda=\mu^{2},$
$f(\mu)=F(\lambda),$ $F^{^{\prime}}(\lambda)=f^{^{\prime}}(\mu)\frac{1}{2\mu
},$ we get
\begin{equation}
\frac{1}{\left\vert d_{n}(t)\right\vert }\leq\frac{c_{11}n^{-3}}{\left\vert
f^{^{\prime}}(\mu_{n}(t))\right\vert },
\end{equation}
where $f^{^{\prime}}=\frac{df}{d\mu},$ $\mu_{n}(t)=\sqrt{\lambda_{n}(t)}%
\in\left\{  z\in\mathbb{C}\text{: }\left\vert z-2\pi n\right\vert <2\right\}
=:\Omega(2\pi n,2).$

Now using the well-known asymptotic formula for the Hill discriminant
$f(\mu)=F(\lambda):$
\begin{equation}
f(\mu)=2\cos\mu+R(\mu),\text{ }\left\vert R(\mu)\right\vert <c_{12}\left\vert
\mu\right\vert ^{-3}%
\end{equation}
(see page 64 of [3]) and the Cauchy's integral formula
\begin{equation}
f^{(k)}(\mu_{n}(t))=\frac{k!}{2\pi i}\int_{\gamma}\frac{f(\xi)}{\left(
\xi-\mu_{n}(t)\right)  ^{k+1}}d\xi,
\end{equation}
where $R(\mu)$ is an entire function and $\mu\in\Omega(2\pi n,2)$,
$\gamma=\left\{  z\in\mathbb{C}\text{: }\left\vert z-\mu_{n}(t)\right\vert
=1\right\}  $ and $\mu_{n}(t)\in\Omega(2\pi n,1)$ we estimate $\left\vert
f^{^{\prime}}(\mu_{n}(t))\right\vert $. \ Using (136) and (137) for $k=1$ we
obtain
\begin{equation}
f^{^{\prime}}(\mu)=-2\sin\mu+R^{^{\prime}}(\mu),\text{ }\left\vert
R^{^{\prime}}(\mu)\right\vert <c_{13}\left\vert \mu\right\vert ^{-3}.
\end{equation}
By (138) and Rouche's theorem the equation $f^{^{\prime}}(\mu)=0$ has a unique
root $\mu_{n}$ in $c_{14}n^{-3}$ neighborhood on $2\pi n$ for large $n.$
Moreover, using (136) and (3) we see that $\mu_{n}=\mu_{n}(t_{0})$,
$\left\vert t_{0}\right\vert <c_{15}n^{-3/2}$ and $\left(  \mu_{n}%
(t_{0})\right)  ^{2}$ is a multiple eigenvalue of the operator $H_{t_{0}%
}(a,b)$ satisfying $\left\vert \mu_{n}(t_{0})-2\pi n\right\vert <c_{14}%
n^{-3}.$ It with (65) gives
\begin{equation}
\text{ }\left\vert \mu_{n}(t)-\mu_{n}(t_{0})\right\vert <c_{16}n^{-3}%
\end{equation}
for $t\in I_{2}$. Since the proof of (81) is unchanged if $[0,\rho]$ is
replaced by $\left\{  \left\vert t\right\vert <c_{15}n^{-3/2}\right\}  $ we
have
\begin{equation}
(4\pi nt_{0})^{2}=-\beta_{n}\alpha_{n}\left(  1+O(n^{-2})\right)  .
\end{equation}

\begin{remark}
As is noted in above we consider the case when (102) does not hold say for
$c_{3}=1/9.$ Then $-\beta_{n}\alpha_{n}=u_{n}+iv_{n},$ $\left\vert
u_{n}\right\vert \geq9\left\vert v_{n}\right\vert .$ Since $\mu_{n}%
(-t_{0})=\mu_{n}(t_{0}),$ without using of generality it can be assumed that
$\operatorname{Re}t_{0}\geq0.$ These arguments with (140) imply that
$t_{0}=u_{0}+iv_{0},$ $u_{0}>8\left\vert v_{0}\right\vert .$ Using it, (140)
and definition of $I_{2}$ we obtain the following relations which we use in
the proof of the main results
\begin{equation}
\text{ }\left\vert t_{0}\right\vert =\left(  4\pi n\right)  ^{-1}%
\varepsilon_{n}(1+O(n^{-2})),\text{ }\tfrac{3}{4}\left\vert t_{0}\right\vert
\leq u_{0}\leq\left\vert t_{0}\right\vert ,\text{ }\left\vert t-t_{0}%
\right\vert <\left\vert t_{0}\right\vert
\end{equation}
for all $t\in I_{2},$ where $\varepsilon_{n}$ is defined in Remark 5.
\end{remark}

\begin{lemma}
If $t\in I_{2}$ , then%
\begin{equation}
\frac{1}{\left\vert d_{k}(t)\right\vert }\leq\frac{c_{11}n^{-3}}{\left\vert
\sqrt{\left\vert \sin t_{0}\right\vert }\sqrt{\left\vert t-t_{0}\right\vert
}\right\vert }.
\end{equation}

\end{lemma}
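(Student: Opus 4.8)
The plan is to bound the denominator $|f'(\mu_n(t))|$ appearing in (135) from below by $\sqrt{|\sin t_0|}\,\sqrt{|t-t_0|}$; substituting that lower bound into (135) immediately yields (143). The starting point is that $\mu_n=\mu_n(t_0)$ is the unique, hence simple, zero of $f'$ in the $c_{14}n^{-3}$-neighbourhood of $2\pi n$, as established just above (140). I would write $f'(\mu_n(t))=\int_{\mu_n(t_0)}^{\mu_n(t)}f''(\xi)\,d\xi$ along the straight segment joining the two points and differentiate (138) once more, using the Cauchy formula (137) with $k=2$ to control the remainder, so that $f''(\xi)=-2\cos\xi+O(n^{-3})$. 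By (139) every $\xi$ on this segment lies within $O(n^{-3})$ of $2\pi n$, whence $\cos\xi=1+O(n^{-6})$ and $f''(\xi)=-2+o(1)$ uniformly. Consequently $|f'(\mu_n(t))|\ge|\mu_n(t)-\mu_n(t_0)|$ for all large $n$, and the task reduces to bounding $|\mu_n(t)-\mu_n(t_0)|$ from below.

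Next I would pass to the $\lambda$-variable. Since $\mu_n(t)+\mu_n(t_0)=4\pi n+O(n^{-2})$ by (65), one has $|\mu_n(t)-\mu_n(t_0)|\sim(4\pi n)^{-1}|\lambda_n(t)-\lambda_n(t_0)|$, so it suffices to estimate $\lambda_n(t)-\lambda_n(t_0)$. Inserting (48) with $j=2$ and using $(2\pi n+t)^2-4\pi nt=(2\pi n)^2+t^2$ gives $\lambda_n(t)=(2\pi n)^2+t^2+\tfrac12(A+A')+\sqrt{D(\lambda_n(t))}$. Because $\lambda_n(t_0)$ is a multiple eigenvalue, $D$ vanishes there, equivalently (140) holds; thus $\sqrt{D(\lambda_n(t_0))}=0$ and $\lambda_n(t)-\lambda_n(t_0)=(t^2-t_0^2)+\tfrac12\big[(A+A')(\lambda_n(t))-(A+A')(\lambda_n(t_0))\big]+\sqrt{D(\lambda_n(t))}$.

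The heart of the argument is to show that the $\sqrt{D}$ term dominates and to compute its size. By (53), Lemma 1 and Remark 4, $D(\lambda_n(t))=(4\pi n)^2t^2[1]+\alpha_n\beta_n[1]$, and substituting (140) gives $D(\lambda_n(t))=(4\pi n)^2(t-t_0)(t+t_0)[1]$, so that $|\sqrt{D(\lambda_n(t))}|\sim 4\pi n\sqrt{|t-t_0|}\,\sqrt{|t+t_0|}$. The competing terms are harmless: $|t^2-t_0^2|=|t-t_0|\,|t+t_0|$ and, by (32), the $A$-increment is $O(n^{-2})|\lambda_n(t)-\lambda_n(t_0)|$, both smaller than $|\sqrt D|$ by a factor $O\big((4\pi n)^{-1}\sqrt{|t-t_0|\,|t_0|}\big)=o(1)$, since $|t_0|\sim(4\pi n)^{-1}\varepsilon_n$ and $\varepsilon_n\to0$; hence $|\lambda_n(t)-\lambda_n(t_0)|\ge\tfrac12|\sqrt D|$ for large $n$, which makes the branch of $\sqrt D$ irrelevant. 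Finally, the geometry of $t_0$ in (142) — $t$ real and nonnegative, $\tfrac34|t_0|\le u_0\le|t_0|$, and $|t-t_0|<|t_0|$ — gives $\tfrac34|t_0|\le t+u_0\le|t+t_0|\le 3|t_0|$, i.e. $|t+t_0|\sim|t_0|$, while $|t_0|\to0$ yields $|\sin t_0|\sim|t_0|$. Combining these, $|\mu_n(t)-\mu_n(t_0)|\sim\sqrt{|t-t_0|}\,\sqrt{|\sin t_0|}$, and feeding the resulting lower bound on $|f'(\mu_n(t))|$ into (135) produces (143), with a possibly enlarged constant $c_{11}$.

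The main obstacle is the third paragraph: one must certify that $\sqrt{D}$ genuinely dominates $t^2-t_0^2$ and the $A$-increment, so that no cancellation destroys the lower bound, and one must extract $|t+t_0|\sim|t_0|\sim|\sin t_0|$ from the complex location of $t_0$ described in Remark 6. Once domination is secured the branch ambiguity of $\sqrt D$ disappears, because only magnitudes enter the final estimate.
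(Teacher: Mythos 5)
Your first paragraph reproduces the paper's own reduction: the Taylor/Cauchy argument giving $f''(\xi)=-2\cos\xi+O(n^{-3})$ and hence $\left\vert f^{\prime}(\mu_{n}(t))\right\vert \geq\left\vert \mu_{n}(t)-\mu_{n}(t_{0})\right\vert$ is exactly (143)--(144), and combining a lower bound for $\left\vert \mu_{n}(t)-\mu_{n}(t_{0})\right\vert$ with (135) is also the paper's plan. The gap is in your third paragraph, at the step ``substituting (140) gives $D(\lambda_{n}(t))=(4\pi n)^{2}(t-t_{0})(t+t_{0})[1]$.'' The two factors $[1]=1+O(n^{-2})$ in $D(\lambda_{n}(t))=(4\pi n)^{2}t^{2}[1]+\alpha_{n}\beta_{n}[1]$ are \emph{different, uncorrelated} quantities, and (140) itself carries its own $1+O(n^{-2})$; subtracting therefore yields $D(\lambda_{n}(t))=(4\pi n)^{2}\bigl[(t-t_{0})(t+t_{0})+O(n^{-2})\left\vert t_{0}\right\vert ^{2}\bigr]$, with an \emph{additive} error of fixed size $\sim n^{-2}\varepsilon_{n}^{2}$ that is not proportional to $t-t_{0}$. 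Consequently your asymptotics $\left\vert \sqrt{D}\right\vert \sim4\pi n\sqrt{\left\vert t-t_{0}\right\vert }\sqrt{\left\vert t+t_{0}\right\vert }$, and with it the domination argument, can only be certified when $\left\vert t-t_{0}\right\vert \gg n^{-2}\left\vert t_{0}\right\vert$. On the window $0<\left\vert t-t_{0}\right\vert \lesssim n^{-2}\left\vert t_{0}\right\vert$ --- which is nonempty and is precisely where the lemma's $\left\vert t-t_{0}\right\vert ^{-1/2}$ blow-up rate is needed, since $t_{0}$ may be real and $t\in I_{2}$ passes through it --- the method gives no lower bound at all. The defect is structural: (140) locates $t_{0}$ only to relative precision $O(n^{-2})$, so no estimate routed through the asymptotic quadratic $D$ can resolve it below the scale $n^{-2}\varepsilon_{n}^{2}$. (A smaller inaccuracy of the same kind: $D(\lambda_{n}(t_{0}),t_{0})$ is not exactly zero at a double eigenvalue; (82) forces only $4D(1+O(n^{-2}))^{2}=(\partial_{\lambda}D)^{2}$, though this particular error is negligible.)

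The paper avoids this by never differencing asymptotic formulas: it uses the \emph{exact} characteristic relation $f(\mu_{n}(t))=2\cos t$ together with second-order Taylor expansions of the entire functions $f$ about $\mu_{n}(t_{0})$ (where $f^{\prime}(\mu_{n}(t_{0}))=0$ and $f(\mu_{n}(t_{0}))=2\cos t_{0}$) and of $\cos$ about $t_{0}$, whose remainders are \emph{multiplicative} on the quadratic terms. Equating the two expansions gives $(\mu_{n}(t)-\mu_{n}(t_{0}))^{2}(1+o(1))=2(\sin t_{0})(t-t_{0})+(t-t_{0})^{2}(1+o(1))$, and the geometry of Remark 6 (your observation $\left\vert t+t_{0}\right\vert \sim\left\vert t_{0}\right\vert \sim\left\vert \sin t_{0}\right\vert$, which is correct and is exactly how (141) is used) then yields $\left\vert \mu_{n}(t)-\mu_{n}(t_{0})\right\vert >\sqrt{\left\vert \sin t_{0}\right\vert }\sqrt{\left\vert t-t_{0}\right\vert }$ \emph{uniformly down to} $t=t_{0}$, because all error terms vanish with $t-t_{0}$. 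To repair your argument you would have to replace the $D$-based computation of $\lambda_{n}(t)-\lambda_{n}(t_{0})$ by such an exact identity, which is in effect the paper's proof.
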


\begin{proof}
By formulas (136) and (137) for $k=2$ we have
\begin{equation}
f^{^{\prime\prime}}(\mu)=-2\cos\mu+R^{^{\prime\prime}}(\mu),\text{ }\left\vert
R^{^{\prime\prime}}(\mu)\right\vert <c_{17}\left\vert \mu\right\vert ^{-3}%
\end{equation}
for $\left\vert \mu-\mu_{n}(t_{0})\right\vert <c_{18}n^{-3}.$ Using the
Taylor's theorem for $f^{^{\prime}}(\mu)$ and taking into account that
$f^{^{\prime}}(\mu_{n}(t_{0}))=0$ we get
\[
f^{^{\prime}}(\mu_{n}(t))=f^{^{\prime\prime}}(\mu_{n}(t))(\mu_{n}(t)-\mu
_{n}(t_{0}))+f_{2}(\mu_{n}(t))(\mu_{n}(t)-\mu_{n}(t_{0}))^{2},
\]
where $\left\vert f_{2}(\mu_{n}(t))\right\vert <c_{19}\left\vert \mu
_{n}(t)\right\vert ^{-1}$ (see pages 125 and 126 of [1]). It with (143), (139)
and (65) imply that%
\begin{equation}
\left\vert f^{^{\prime}}(\mu_{n}(t))\right\vert >\left\vert \mu_{n}(t)-\mu
_{n}(t_{0})\right\vert .
\end{equation}
Similarly, using the Taylor's theorem for $f(\mu)$ and $\cos t$ and taking
into account that $f(\mu_{n}(t_{0}))=2\cos t_{0},$ $f^{^{\prime}}(\mu
_{n}(t_{0}))=0,$ $f^{^{\prime\prime}}(\mu_{n}(t))=-2+O(n^{-1})$ we obtain%
\[
f(\mu_{n}(t)))=2\cos t_{0}-(\mu_{n}(t))-\mu_{n}(t_{0}))^{2}(1+o(1)),
\]%
\[
2\cos t=2\cos t_{0}-2\left(  \sin t_{0}\right)  (t-t_{0})-(t-t_{0})^{2}\left(
1+o(1)\right)  .
\]
These equalities with $f(\mu_{n}(t))=2\cos t$ and (141)) imply that
\[
\left\vert \mu_{n}(t)-\mu_{n}(t_{0})\right\vert >\sqrt{\left\vert \sin
t_{0}\right\vert }\sqrt{\left\vert t-t_{0}\right\vert }.
\]
It with (135) and (144) implies (142).
\end{proof}

\begin{theorem}
If $ab\neq0,$ then the operator $H(a,b)$ has no ESS at infinity.
\end{theorem}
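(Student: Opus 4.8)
The plan is to prove directly that the integral (125) stays uniformly bounded as $|n|\to\infty$, i.e. that $\int_{(-\pi,\pi]}|d_n(t)|^{-1}\,dt\le C$ for all sufficiently large $|n|$. Since in Definition 5 every set $I(s)$ lies in $(-\pi,\pi]$ and $\lambda_{n_s}$ is simple there, such a bound makes the divergence (12) impossible and hence excludes an ESS at infinity. Using $\lambda_n(-t)=\lambda_n(t)$ and Remark 3 it suffices to bound the integral over $[0,\pi]$; on $[\rho,\pi-\rho]$ the integrand is bounded by (17), on $[n^{-3},\rho]$ it is bounded by Proposition 2, and the corresponding statements near $\pi$ follow from Remark 3. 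Everything therefore reduces to $\int_0^{n^{-3}}|d_n(t)|^{-1}\,dt$, which I split along the partition $I_1,I_2,I_3$ of Remark 5, with $I_3=I_4\cup I_5$ as in Lemma 4.

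Away from the critical subinterval $I_2$ the estimates are already in place. If $|a|=|b|$, then on $I_1\cup I_3$ Lemma 2(a) gives $F_+(\lambda_n(t))\sim1$ and $|u_n(t)\overline{u_n^*(t)}|>c_6$, so by (115) and (118) one has $|d_n(t)|\sim1$ and the contribution is $O(n^{-3})$. If $|a|\ne|b|$, say $|a|<|b|$, then Lemma 4(a) gives $|d_n(t)|\sim1$ on $I_5$, while Lemma 4(b) gives $|d_n(t)|\ge c_7|a^nb^{-n}|$ on $I_1\cup I_4$. On the latter set $|d_n(t)|^{-1}\le c_7^{-1}(|b|/|a|)^n$ is exponentially large, but $I_1\cup I_4$ has length $\lesssim|\beta_n|/n$, and the identity $(|b|/|a|)^n|\beta_n|=|b|^{3n}|a|^{-n}\big((2\pi)^{2n-1}(2n-1)!\big)^{-2}$ shows, because of the factor $\big((2n-1)!\big)^{-2}$ in $\beta_n$, that this product decays faster than any geometric sequence; hence the contribution tends to $0$.

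On $I_2$ there are two cases. When (102) holds the bound on $|d_n(t)|$ persists: Lemma 2(b) gives $|d_n(t)|\sim1$ when $|a|=|b|$, and Lemma 4(c) gives $|d_n(t)|\ge c_7|a^nb^{-n}|$ when $|a|<|b|$; since $I_2$ has length $\sim\varepsilon_n/n$ with $\varepsilon_n=\sqrt{|\alpha_n\beta_n|}$ and $(|b|/|a|)^n\varepsilon_n=|\beta_n|$, the same factorial decay forces $\int_{I_2}|d_n(t)|^{-1}\,dt\to0$. The genuinely delicate case is when (102) fails, so that the discriminant $D(\lambda_n(t))$ nearly vanishes and the multiple-eigenvalue parameter $t_0$ lies close to the real axis (Remark 6); here I would invoke Lemma 5, which bounds $|d_n(t)|^{-1}$ by $c_{11}n^{-3}\big(|\sin t_0|\,|t-t_0|\big)^{-1/2}$. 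Both the half-width of $I_2$ and $|t_0|$ are of order $\varepsilon_n/n$, so $|\sin t_0|^{-1/2}\sim(\varepsilon_n/n)^{-1/2}$, while the integrable square-root singularity gives $\int_{I_2}|t-t_0|^{-1/2}\,dt=O\big(\sqrt{\varepsilon_n/n}\big)$ (using $|t-t_0|\ge|t-\operatorname{Re}t_0|$). The two factors cancel and leave $\int_{I_2}|d_n(t)|^{-1}\,dt=O(n^{-3})$.

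Collecting the pieces gives $\int_0^{n^{-3}}|d_n(t)|^{-1}\,dt\to0$, hence the uniform bound $\int_{(-\pi,\pi]}|d_n(t)|^{-1}\,dt=O(1)$, which yields the theorem. The main obstacle is precisely the last estimate: taming the square-root blow-up of $1/d_n$ caused by an almost-real double eigenvalue $t_0$, together with the bookkeeping that the width of $I_2$ and the distance of $t_0$ from the real axis are comparable (both of order $\varepsilon_n/n$), so that the apparently singular integral is in fact uniformly small; and in the case $|a|\ne|b|$ it is the factorial decay carried by $\beta_n$ that defeats the exponentially large pointwise bounds on $1/d_n$.
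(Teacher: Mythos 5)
Your proposal is correct and follows essentially the same route as the paper's proof of Theorem 12: the same reduction via $\lambda_n(-t)=\lambda_n(t)$, formula (17) and Proposition 2 to the interval $[0,n^{-3}]$, the same partition $I_1,I_2,I_3=I_4\cup I_5$ handled by Lemmas 2 and 4 off the critical set, and the same use of Lemma 5 with Remark 6 (including the inequality $|t-t_0|\ge|t-u_0|$ and the integrable square-root singularity cancelling $|\sin t_0|^{-1/2}$) when (102) fails. Your explicit factorial-decay bookkeeping for $(|b|/|a|)^n|\beta_n|$ merely spells out what the paper compresses into ``one can easily verify'' at (145).
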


\begin{proof}
Using Lemma 4 and the definitions of $I_{1},$ $I_{4},$ $I_{5},$ $\varepsilon
_{n}$ and $\beta_{n}$ one can easily verify that
\begin{equation}%
%TCIMACRO{\tint \limits_{\left[  0,n^{-3}\right]  \backslash I_{2}}}%
%BeginExpansion
{\textstyle\int\limits_{\left[  0,n^{-3}\right]  \backslash I_{2}}}
%EndExpansion
\left\vert d_{n}(t)\right\vert ^{-1}dt=O(n^{-3}).
\end{equation}
On the other hand, by Proposition 2 the integral of $\left\vert d_{n}%
(t)\right\vert ^{-1}$ over $\left[  n^{-3},\rho\right]  $ is less than
$c_{20}.$ If (102) holds then by Lemma 3(c) in (145) the integral of
$\left\vert d_{n}(t)\right\vert ^{-1}$ over $\left[  0,n^{-3}\right]
\backslash I_{2}$ can be replaced by the integral over $\left[  0,n^{-3}%
\right]  $. If (102) does not hold, then using (142), (141) and the obvious
relations $I_{2}\subset\left[  0,2\left\vert t_{0}\right\vert \right]  $ and
$\left\vert t-u_{0}\right\vert \leq\left\vert t-t_{0}\right\vert $ we obtain%
\[
\int\limits_{I_{2}}\left\vert d_{n}(t)\right\vert ^{-1}dt\leq\int
\limits_{\left[  0,2\left\vert t_{0}\right\vert \right]  }\frac{c_{17}n^{-3}%
}{\left\vert \sqrt{\left\vert \sin t_{0}\right\vert }\sqrt{\left\vert
t-u_{0}\right\vert }\right\vert }dt=O\left(  n^{-3}\right)  .
\]
Thus the integral of $\left\vert d_{n}(t)\right\vert ^{-1}$ over $\left[
0,\rho\right]  $ is less than $c_{21}.$ Similarly, integral of $\left\vert
d_{n}(t)\right\vert ^{-1}$ over $\left[  \pi-\rho,\pi\right]  $ is less than
$c_{21}.$ These inequalities with (17) imply that the integral of $\left\vert
d_{n}(t)\right\vert ^{-1}$ over $[0,\pi]$ is less than $c_{22}.$ Since
$\lambda_{n}(-t))=\lambda_{n}(t))$ (see Remark 1) it follows from (134) that
$\left\vert d_{n}(-t)\right\vert ^{-1}=\left\vert d_{n}(t)\right\vert ^{-1}.$
Therefore the integral of $\left\vert d_{n}(t)\right\vert ^{-1}$ over
$(-\pi,\pi]$ is less than $2c_{22}$ and hence by Definition 5 the operator
$H(a,b)$ has no ESS at infinity.
\end{proof}

\textbf{The proofs of Theorems 2, 3 and 4. } The proofs of Theorems 2 and 3
follow from Theorem 12 and Summary 1. Now we prove Theorem 4. It is well-known
that (see [4], [7] and [15]) if either $a=0$ or $b=0$, then $\lambda
_{n}(0)=\left(  2\pi n\right)  ^{2}$ for $n\in\mathbb{Z}\backslash\left\{
0\right\}  $ and $\lambda_{n}(\pi)=\left(  2\pi n+\pi\right)  ^{2}$ for
$n\in\mathbb{Z}$ are the double 2-periodic eigenvalues. Moreover in [7] it was
proven that the geometric multiplicities of these eigenvalues is 1. Thus by
Summary 1$(b),$ $\lambda_{n}(0)$ for $n\in\mathbb{Z}\backslash\left\{
0\right\}  $ and $\lambda_{n}(\pi)$ for $n\in\mathbb{Z}$ are ESS. Moreover it
readily follows from Definitions 4 and 5 that if $L(q)$ has infinitely many
ESS converging to infinity, then it has ESS an infinity. In [18], we have
proved that (14) holds, where%

\begin{equation}
2\pi f_{0}=\int\limits_{[-h,h]}\sum_{\left\vert n\right\vert \leq N(h)}%
a_{n}(t)\Psi_{n,t}dt+\sum_{n>N(h)}\int\limits_{[-h,h]}\left(  a_{n}%
(t)\Psi_{n,t}+a_{-n}(t)\Psi_{-n,t}\right)  dt,
\end{equation}%
\[
2\pi f_{\pi}=\int\limits_{[\pi-h,\pi+h]}\sum_{n=-N(h)-1}^{N(h)}a_{n}%
(t)\Psi_{n,t}dt+\sum_{n>N(h)}\int\limits_{[\pi-h,\pi+h]}\left(  a_{n}%
(t)\Psi_{n,t}+a_{-(n+1)}(t)\Psi_{-(n+1),t}\right)  dt.
\]
By Summary 1(e) if $k\neq0,$ then the sum of two expressions $a_{k}%
(t)\Psi_{k,t}(x)$ and $a_{-k}(t)\Psi_{-k,t}(x)$ corresponding to the ESS
$\lambda_{k}(0)$ is integrable on $[-h,h],$ while both of them is
nonintegrable. Besides $a_{0}(t)\Psi_{0,t}$ is integrable since $\lambda
_{0}(0)$ is a simple eigenvalue and hence is not an ESS. Therefore we have
\[
\int\limits_{\lbrack-h,h]}\sum_{\left\vert n\right\vert \leq N(h)}a_{n}%
(t)\Psi_{n,t}dt=\int\limits_{[-h,h]}a_{0}(t)\Psi_{0,t}dt+\sum_{n=1}^{N(h)}%
\int\limits_{[-h,h]}\left(  a_{n}(t)\Psi_{n,t}+a_{n}(t)\Psi_{n,t}\right)  dt.
\]
Using it in (146) we get (15). In the same way from the last equality for
$f_{\pi}$ we obtain (16).

\end{document}